\documentclass[12pt]{amsart}

\usepackage{xcolor}

\newtheorem{introtheorem}{Theorem}
\newtheorem{theorem}{Theorem}[section]
\newtheorem{corollary}[theorem]{Corollary}
\newtheorem{definition}[theorem]{Definition}
\newtheorem{lemma}[theorem]{Lemma}
\newtheorem{proposition}[theorem]{Proposition}
\newtheorem{remark}[theorem]{Remark}
\newtheorem{conjecture}[theorem]{Conjecture}

\newcommand*{\QEDB}{\null\nobreak\hfill\ensuremath{\Box}}

\DeclareMathOperator{\diam}{diam}
\DeclareMathOperator{\vol}{vol}

\usepackage[utf8]{inputenc}
\usepackage{mathtools}

\title{K{\"a}hler thresholds}
\author{Gabriella Clemente}
\email{gabriella.clemente@cnrs.fr}
\urladdr{https://sites.google.com/view/gclemente}
\address{Université Paris Cité, CNRS, IRIF, Paris, France}
\author{Carlos Simpson}
\email{carlos.simpson@univ-cotedazur.fr}
\urladdr{https://math.univ-cotedazur.fr/u/carlos/}
\address{Université C{\^o}te d'azur, CNRS, LJAD, France}
\begin{document}

\begin{abstract}
    The topology of K{\"a}hler manifolds is largely determined by the geometry due to its rigidity. In particular, the cohomology and Hodge theory of compact K{\"a}hler manifolds is quite restricted. We prove that within a certain threshold, the odd Betti numbers of any compact almost-hermitian manifold satisfying a degenerate K{\"a}hler condition are even, and the even Betti numbers are strictly positive. We call this new type of degenerated K{\"a}hler manifold acK. Our approach to proving these results makes use of a compactness theorem for acK manifolds, and a new version of Hodge theory for compact manifolds endowed with a Sobolev regular K{\"a}hler strucutre. In addition, we lay out a program to pursue the study of acK geometry that accommodates not only the classical viewpoint, but also constructive and finitary proofs, as well as formalization with proof assistants.
\end{abstract}

\keywords{Hodge theory, Betti numbers, K{\"a}hler geometry, almost-symplectic manifolds, almost-hermitian manifolds, Riemannian compactness theorems}

\noindent
\subjclass[2020]{58A14, 32Q15, 51M15, 53C21, 53C23}

\maketitle

\section*{Introduction}

This article makes a contribution to the program of extending the traditional Hodge theory of compact K{\"a}hler manifolds and its geometrical consequences to less integrable settings, i.e.\ to compact almost-symplectic almost-complex manifolds. We introduce a new kind of almost-hermitian manifold that is close to being K{\"a}hler, and prove that under certain bounded geometry assumptions, the odd Betti numbers are even. Recall that an almost-hermitian manifold $(X,g,J)$ is K{\"a}hler if $J$ is parallel w.r.t.\ the Levi-Civita connection $\nabla$ of $g$. We introduce $\epsilon$-almost-K{\"a}hler ($\epsilon$-acK) manifolds, which are defined by the condition that $\nabla J$ is bounded in norm by $\epsilon$. These manifolds are close to being K{\"a}hler in the sense that the almost-complex structure is close to being complex and the associated fundamental $2$-form is close to being symplectic. The precise relationship is given in Proposition \ref{KT_lemma}. 

Let $\mathcal{M}(n,d,k,i_0)$ be the class of compact $n$-dimensional Riemannian manifolds of diameter, Ricci curvature, and injectivity radius bounded by $d,k,i_0$, respectively. Let $a$ be a positive real number and $\mathcal{A}(n,d,k,i_0,a)$ be the space of $\epsilon$-acK manifolds $(X,g,J)$ with constant $\epsilon \in (0,a]$ such that $(X,g) \in \mathcal{M}(n,d,k,i_0)$. Note that $\mathcal{A}(n,d,k,i_0,a)$ can equivalently be described as the space of $a$-acK manifolds $(X,g,J)$ with $(X,g) \in \mathcal{M}(n,d,k,i_0)$, c.f.\ Definition \ref{KT_def2}. Indeed, if $a\leq b$, then $\mathcal{A}(n,d,k,i_0,a) \subseteq \mathcal{A}(n,d,k,i_0,b)$. Here is our main theorem.

\begin{introtheorem}\label{Betti_numbers_thm}
Given an integer $n>1$, a real number $a\geq 1$ and bounds $d,k,i_0$, there is an $0<\epsilon_0 \leq a$ such that for any Riemannian manifold $(X,g) \in \mathcal{M}(n,d,k,i_0)$, if $(X,g,J)$ is $\epsilon_0$-almost-complex-K{\"a}hler, then the odd Betti numbers of $X$ are even.
\end{introtheorem}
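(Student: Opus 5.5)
We argue by contradiction using a compactness theorem for the class $\mathcal{A}(n,d,k,i_0,a)$, followed by Hodge theory for a limiting Sobolev-regular Kähler structure. Suppose no such $\epsilon_0$ exists. Then there is a sequence $(X_j,g_j,J_j)$ with $(X_j,g_j)\in\mathcal{M}(n,d,k,i_0)$, each $(X_j,g_j,J_j)$ being $\epsilon_j$-acK with $\epsilon_j\to 0$, and for each $j$ some odd index $p_j$ with $b_{p_j}(X_j)$ odd. Passing to a subsequence, we may take $p_j=p$ fixed, since $p\le n$.

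The first step is Cheeger–Gromov–Anderson compactness for $\mathcal{M}(n,d,k,i_0)$. Bounded Ricci curvature, injectivity radius below and diameter above give, after passing to a subsequence, diffeomorphisms $\phi_j:X\to X_j$ onto a fixed smooth manifold $X$. The pulled-back metrics $\phi_j^*g_j$ converge in $C^{1,\alpha}$ and weakly in $W^{2,q}$ (harmonic coordinates) to a limit metric $g_\infty$ of class $C^{1,\alpha}\cap W^{2,q}$. In particular $b_p(X_j)=b_p(X)$ for all large $j$.

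Next we pass the almost-complex structures to the limit. Since $|\nabla^{g_j}J_j|\le\epsilon_j\le a$ and $|J_j|_{g_j}$ is pointwise bounded (it is $g_j$-orthogonal), the tensors $\phi_j^*J_j$ are uniformly bounded in $W^{1,\infty}$ in harmonic coordinates. Arzelà–Ascoli then gives a subsequence converging in $C^0$, and weak-$*$ in $W^{1,\infty}$, to $J_\infty$. The limit is $g_\infty$-orthogonal with $J_\infty^2=-1$. Because the Christoffel symbols converge in $C^{0,\alpha}$, $\nabla^{g_\infty}J_\infty$ is the weak limit of tensors of norm $\le\epsilon_j\to0$, so $\nabla^{g_\infty}J_\infty=0$ weakly. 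Thus $(X,g_\infty,J_\infty)$ is a Kähler structure of low (Sobolev) regularity. By Proposition~\ref{KT_lemma}-type reasoning, $J_\infty$ is integrable and $\omega_\infty$ is closed in the weak sense.

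The final step, which is the main obstacle, is Hodge theory for this Sobolev-regular Kähler structure. We would set up the Hodge Laplacian $\Delta_{g_\infty}$ on $L^2$ forms, with coefficients in $C^{0,\alpha}$ or $W^{1,q}$. Elliptic theory for such operators (as a divergence-form system with Hölder first-order coefficients) yields a finite-dimensional space $\mathcal{H}^p$ of harmonic forms, a Hodge decomposition, and $\mathcal{H}^p\cong H^p_{dR}(X;\mathbb{R})$, using regularity to place harmonic forms in $W^{1,q}$. The key point is that parallelism of $J_\infty$, which holds weakly, still gives the Kähler identities $[\Lambda,\partial]=i\bar\partial^*$ and $\Delta_d=2\Delta_{\bar\partial}$. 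These hold as identities of operators whose coefficients involve only $g_\infty$, $J_\infty$ and first derivatives, which are available in $L^\infty$, interpreted in weak form on $W^{1,2}$ forms. It follows that $\Delta_d$ commutes with the bidegree projections defined by $J_\infty$, which are only $W^{1,\infty}$, so commutation must be checked weakly. Hence $\mathcal{H}^p=\bigoplus_{r+s=p}\mathcal{H}^{r,s}$ with complex conjugation exchanging $\mathcal{H}^{r,s}$ and $\mathcal{H}^{s,r}$. For odd $p$ there is no $r=s$ term, so $b_p(X)=\dim_{\mathbb{C}}\mathcal{H}^p_{\mathbb{C}}$ is even. This contradicts $b_p(X_j)=b_p(X)$ being odd.

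The delicate part we expect is verifying the Kähler identities and elliptic regularity with only $W^{1,\infty}$ control on $J_\infty$ and $C^{1,\alpha}$ control on $g_\infty$. The first approach to try is to use the weak formulation of the identities, tested against smooth forms, and to obtain the needed regularity from $L^q$ estimates for divergence-form systems.
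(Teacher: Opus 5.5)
Your argument is correct and follows the paper's overall architecture: contradiction with $\epsilon_j\to0$, Anderson compactness to a fixed $X$ with a formally K{\"a}hler Sobolev limit, a rough Hodge isomorphism, an $(r,s)$-splitting exchanged by conjugation, and diffeomorphism invariance. You also make explicit the pigeonhole fixing one odd degree. Two steps go differently.

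For the limit of $J$, you combine the pointwise bound $|\nabla^{g_j}J_j|\le\epsilon_j$ with $C^{0,\alpha}$ convergence of the Christoffel symbols to get uniform Lipschitz bounds, then apply Arzel\`a--Ascoli. The paper instead uses weak $W^{1,p}$ compactness in Theorem \ref{prepctness_thm}. Your route gives $\nabla^{g_\infty}J_\infty=0$ at once; indeed, schematically, $\partial J_j\to-[\Gamma_\infty,J_\infty]$ uniformly. That equation bootstraps $J_\infty$ to the regularity $C^{1,\alpha}\cap W^{2,q}$ of $g_\infty$, so your concern about having only $W^{1,\infty}$ control on $J_\infty$ is unnecessary.

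For the splitting, you use the K{\"a}hler identities and $\Delta_d=2\Delta_{\bar\partial}$. This needs integrability of $J_\infty$, which is automatic because $N_J$ is algebraic in $\nabla J$. It also needs a justification you leave implicit. At each point, a quadratic coordinate change kills $\Gamma_\infty$, hence also $\partial J_\infty$. So the $1$-jet of $(g_\infty,J_\infty)$ is flat, and the first-order identities reduce to the constant-coefficient case. Used as the quadratic-form identity $\|d\alpha\|^2+\|d^*\alpha\|^2=2(\|\bar\partial\alpha\|^2+\|\bar\partial^*\alpha\|^2)$ on $W^{1,2}$ forms, it requires harmonic forms only in $W^{1,2}$.

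The paper deliberately avoids $\partial,\bar\partial$ and integrability. It obtains $[\Delta_d,\mathbf{J}]=0$ from the Weitzenb{\"o}ck formula (Proposition \ref{DeltaJ}), using $\nabla\mathbf{J}=0$ and the curvature symmetry $R^{\nabla}(\zeta,J\eta)=-R^{\nabla}(J\zeta,\eta)$, and writes $\Pi^{p,q}$ as a polynomial in $\mathbf{J}$. That route is purely Riemannian and calculational, which suits the paper's formalization aims. However, it involves the merely $L^p$ curvature and needs harmonic forms in $W^{2,2}$ (Proposition \ref{reg22}, via Gilbarg--Trudinger and Beck). Your route rests on first-order pointwise identities and needs less regularity of harmonic forms.

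The rough Hodge isomorphism, which you only sketch, is what the paper's $L^2$-cohomology section supplies (Corollary \ref{Hodge_thm}). There the smooth background structure is used to get the closed range of $d$.
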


In other words, given $(n,a,d,k,i_0) \in \mathbb{Z}_{\geq 2} \times \mathbb{R}_{\geq 1} \times \mathbb{R}_{> 0} \times \mathbb{R}_{\geq 0} \times \mathbb{R}_{>0}$, there is an $\epsilon_0 \coloneqq \epsilon_0 (n,d,k,i_0)\leq a$ s.t.\ for any $(X,g,J) \in \mathcal{A}(n,d,k,i_0,\epsilon_0)$, we have that $b_{2k+1}(X)$ is even for all $k$.

The proof of Theorem \ref{Betti_numbers_thm} relies on a compactness result for acK manifolds (see Definition \ref{KT_def2}), and a Sobolev regular Hodge decomposition theorem, which we discuss below. 

\begin{introtheorem}\label{prepctness_thm}
For any $a > 0$ and for appropriate $p\gg 0$, the space $\mathcal{A}(n,d,k,i_0,a)$ of acK manifolds of bounded geometry is precompact in the weak-$W^{2,p} \times W^{1,p}$ topology.
\end{introtheorem}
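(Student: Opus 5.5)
The proof combines two ingredients: the classical Cheeger--Anderson compactness theory for Riemannian manifolds with bounded Ricci curvature, injectivity radius and diameter, and the observation that in harmonic coordinates the $\epsilon$-acK condition $|\nabla J|\le a$ is a uniform $C^0$ (hence $L^p$) bound on first derivatives of $J$.

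\textbf{Step 1: precompactness of the metrics.} Given a sequence $(X_i,g_i,J_i)\in\mathcal{A}(n,d,k,i_0,a)$, the underlying $(X_i,g_i)$ lie in $\mathcal{M}(n,d,k,i_0)$. By Anderson's theorem, for any $p<\infty$ there is a harmonic radius $r_0=r_0(n,k,i_0,p)>0$ with the following property: on each ball of radius $r_0$ there are harmonic coordinates in which
\[
\|g_{ij}\|_{W^{2,p}}\le C(n,k,i_0,p)
\quad\text{and}\quad
C^{-1}\delta\le g\le C\delta .
\]
The diameter bound gives a uniform number $N$ of such balls covering each $X_i$. The transition maps between harmonic charts are $W^{3,p}$-bounded. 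After passing to a subsequence, one obtains a limit smooth manifold $X_\infty$, diffeomorphisms $F_i:X_\infty\to X_i$, and a $W^{2,p}$ metric $g_\infty$ with $F_i^*g_i\rightharpoonup g_\infty$ weakly in $W^{2,p}$. The convergence is strong in $C^{1,\alpha}$ by Rellich--Kondrachov, since $p>n$. I will cite this as a black box, choosing $p\gg n$.

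\textbf{Step 2: bounds on $J$.} In a harmonic chart, write the components $J^a_b$. The pointwise identity $J^2=-1$ with $g$ uniformly equivalent to $\delta$ gives $|J|_g=\sqrt{2n}$ up to normalization, hence $\|J^a_b\|_{C^0}\le C$. Next,
\[
\partial_c J^a_b=(\nabla J)^a_{cb}-\Gamma^a_{cd}J^d_b+\Gamma^d_{cb}J^a_d .
\]
The first term is bounded by $a$ in $C^0$. The Christoffel symbols are bounded in $W^{1,p}\subset C^0$ by Step 1. Hence
\[
\|J\|_{W^{1,\infty}(\text{chart})}\le C(n,d,k,i_0,a),
\]
and in particular $J$ is bounded in $W^{1,p}$. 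Pulling back by $F_i$ and using the $W^{3,p}$ control of the chart changes, $F_i^*J_i$ is bounded in $W^{1,p}(X_\infty)$. Banach--Alaoglu (reflexivity, $1<p<\infty$) plus a diagonal argument over the finite atlas then gives $F_i^*J_i\rightharpoonup J_\infty$ weakly in $W^{1,p}$ and strongly in $C^{0,\alpha}$.

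\textbf{Step 3: the limit lies in the right class.} Strong $C^0$ convergence gives $J_\infty^2=-1$ and $g_\infty$-compatibility. Weak lower semicontinuity of the norm of $\nabla^{g_\infty}J_\infty$ gives $|\nabla J_\infty|\le a$ a.e. To justify this, note that $\nabla^{g_i}J_i$ is a product of strongly convergent $\Gamma_i$ with $J_i$, plus $\partial J_i$, which converges weakly. So the limit is a Sobolev-regular acK structure, which is what ``precompact in weak-$W^{2,p}\times W^{1,p}$'' means.

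\textbf{Main obstacle.} The hard part is Step 1's compatibility with Step 2, namely producing the diffeomorphisms $F_i$ so that the pulled-back $J_i$ are uniformly controlled. This requires the Anderson/Cheeger construction to give chart transitions with uniform $W^{3,p}$ bounds rather than mere $C^{1,\alpha}$ control. If that is unavailable, I would instead work intrinsically: fix the limiting atlas, express $F_i$ as a center-of-mass gluing of harmonic charts, and bound $dF_i$ in $W^{2,p}$ directly, so that pullback preserves $W^{1,p}$ bounds on $J$.
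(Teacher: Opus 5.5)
Your proposal is correct and has the same overall structure as the paper's proof: Anderson's theorem for the metrics, then reflexivity of $W^{1,p}$ for the almost-complex structures, then a bound on $\nabla^{g_\infty}J_\infty$ for the limit. The genuine difference is in the last step.

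\textbf{How the two routes get the bound on the limit.} The paper first obtains only an $L^p$ bound on the weak limit at the fixed exponent $p$. It then upgrades this to an a.e.\ bound by rerunning the compactness argument for every $p'>p$. It transports the $L^{p'}$ bounds back to $X_\infty$ through the $W^{3,p}$ isomorphism between the limits obtained for different exponents (the unicity remark after Theorem~\ref{Acpctness}), and lets $p'\to\infty$.

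You instead get the pointwise bound at a single fixed $p$, which is more direct and avoids comparing limits across exponents. Two points should be tightened:
\begin{itemize}
\item Lower semicontinuity of the global $L^p$ norm only gives an integral bound. What you need is that the convex, strongly closed set $\{T : |T|_{g_\infty}\le b \text{ a.e.}\}$ is weakly closed (Mazur).
\item Apply this with $b=a(1+\delta)$ and then let $\delta\to 0$. This absorbs the discrepancy between $|\cdot|_{g_i}$ and $|\cdot|_{g_\infty}$, using $g_i\to g_\infty$ uniformly.
\end{itemize}

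Your write-up also makes explicit two things the paper leaves implicit. One is the identity $\partial J=\nabla J-[\Gamma,J]$, which is what actually justifies the paper's claim that $J_i$ is $W^{1,p}$-bounded. The other is the identification of the weak $L^p$ limit of $\nabla^{g_i}J_i$ with $\nabla^{g_\infty}J_\infty$. Note also that the paper records the sharper bound $\liminf\epsilon_i$ rather than $a$, since that is what Theorem~\ref{Betti_numbers_thm} needs. Your argument gives this with $\epsilon_i$ in place of $a$, along a subsequence realizing the $\liminf$.

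\textbf{The ``main obstacle'' you flag is not one.} The bound $|\nabla^{g_i}J_i|_{g_i}\le a$ is diffeomorphism invariant, so it holds for the pair $(F_i^*g_i,F_i^*J_i)$ on $X_\infty$. Moreover, $F_i^*g_i$ is already bounded in $W^{2,p}$ in a fixed atlas of $X_\infty$, since a weakly convergent sequence is bounded. So you can run your Step~2 Christoffel-symbol computation directly in that atlas. This gives the $W^{1,\infty}$ bound on $F_i^*J_i$ with no control of chart transitions and no center-of-mass construction. This is exactly what the paper does when it passes to $\|\nabla^i f_i^*J_i\|_{f_i^*g_i}\le\epsilon$ on the limit manifold.
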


Let $g$ be a $W^{2,p}$ Riemannian metric on a smooth manifold $X$, and $\mathcal{H}^k_g (X)$ be the space of harmonic w.r.t.\ $g$ $k$-forms. We prove that \[\mathcal{H}^k_g (X) \simeq H^k_{DR} (X),\] where $H^k_{DR} (X)$ is the $k^{th}$ complex De Rham cohomology group of $X$ (c.f.\ Corollary \ref{Hodge_thm}). Now, let $J$ be a $g$-compatible $W^{1,p}$ almost-complex structure on $X$. Suppose that $(X,g,J)$ is formally K{\"a}hler, i.e.\ $\nabla J=0$. In this case, our less regular than usual Hodge theorem together with analysis considerations gives a decomposition of $H^k_{DR} (X)$ into $W^{1,2}$ $(p,q)$-forms (see Corollary \ref{pqdecomp}). These are the main technical components of the proof of Theorem \ref{Betti_numbers_thm}.

The proof of Theorem \ref{Betti_numbers_thm} indicates that there are probably many more topological invariants of compact K{\"a}hler manifolds that are preserved within a certain radius of non-integrability. Theorem \ref{Thm-P} is a metatheorem of sorts that should be useful when it comes to identifying such invariants. We coin the term \emph{K{\"a}hler threshold}, which can be informally understood to be a set of constraints that determine a maximal category of manifolds for which basic facts of K{\"a}hler geometry continue to hold true. In this sense, the spaces $\mathcal{A}(n,d,k,i_0,a)$ describe almost-hermitian manifolds at the K{\"a}hler threshold. We expect, for instance, that certain compact acK manifolds of real dimension $4$ 
(or higher) satisfy a Bogomolov-Miyaoka-Yau-type inequality. 

Bogomolov's pioneering work, both in the case of non-Kähler complex manifolds \cite{BogomolovVII} and of course on his inequalities of Chern numbers \cite{BogomolovIneq}, have been an inspiration for the whole theme of the study of geometrical and topological invariants of complex projective varieties and Kähler manifolds. These have been main motivations for our work.

Although our main theorem is of a qualitative nature, the quantitative aspects of this new theory will be the topic of a future article. We have included a final section on questions and future directions, which range from topology and geometry at the K{\"a}hler threshold to the automation of extension procedures in these areas with proof assitants.

The organization of the article is as follows. In section \ref{1}, we review the Hodge theory of K{\"a}hler manifolds and compactness results of Riemannian geometry that we will need in the proof of the main theorem, Theorem \ref{Betti_numbers_thm}. In section \ref{2}, we properly introduce acK manifolds. Section \ref{3} is dedicated to the proof of Theorem \ref{prepctness_thm}. Sections \ref{5} and \ref{6} deal with the analysis technical steps that go into the proof of Theorem \ref{Betti_numbers_thm}. Section \ref{7} is reserved for the proof of this theorem. The final section is on the future of the present work.

\section{Background}\label{1}

\subsection{Hodge theory}\label{1.1}

Here we outline a proof of the well-known fact that odd Betti numbers of a compact K{\"a}hler manifold are even. The proof does not pass through Dolbeault cohomology, which has not yet been studied in our not fully regular setting. Also, note that although there is an almost-complex version of Dolbeault cohomology, it does not determine a Hodge-like decomposition in general \cite{CO}.

One of the objectives of this subsection is to present the proof in a calculatory way that can then be extended to the less regular situation to be encountered later. This is why we need to write the calculations in detail. Also, this presentation could serve as a roadmap for computer formalization. 

Let $(X,J)$ be an almost-complex manifold. Let \[\Omega^{\bullet}(X)=\bigoplus_{k \geq 0} \Omega^k(X)\] be the exterior algebra of differential forms on $X.$ 

\begin{definition}\label{induced_J}
We have an endomorphism $\mathbf{J} \colon \Omega^{\bullet}(X) \to \Omega^{\bullet}(X),$ given as follows : for any $\alpha \in \Omega^k(X)$ and vector fields $\zeta_1,\dots,\zeta_k \in \mathfrak{X}(X)$, we have that \[(\mathbf{J}\alpha)(\zeta_1,\dots,\zeta_k) \coloneqq \sum_{1\leq j\leq k} \alpha(\zeta_1,\dots,J\zeta_j,\dots,\zeta_k).\] 
\end{definition}

Indeed, $\mathbf{J}$ extends the action of $J$ to $\Omega^{\bullet}(X).$ There is a completely analogous definition of $\mathbf{J}$ acting on complex differential forms. In order to simplify notation, we do not explicitly distinguish between real and complex differential forms, vector fields, and so forth. The context provided will make clear what is meant. 

Recall that for each $k$, $J$ uniquely determines a decomposition \[\Omega^k (X)=\bigoplus_{p+q=k} \Omega^{p,q}(X).\] Let $\Pi^{p,q} \colon \Omega^{\bullet}(X) \to \Omega^{p,q}(X)$ be the obvious projection. We denote $(p,q)$-forms with a superscript, i.e.\ $\alpha^{p,q}$ is such a form.

\begin{lemma}\label{poly_J}
We have that $\Pi^{p,q} \in \mathbb{C}[\mathbf{J}].$
\end{lemma}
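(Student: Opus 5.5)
The idea is to show that $\mathbf{J}$ acts diagonalizably on complex forms, and that its eigenspaces are exactly the $\Omega^{p,q}(X)$ with distinct eigenvalues; the projections are then Lagrange interpolation polynomials in $\mathbf{J}$.

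\textbf{Step 1: $\mathbf{J}$ is diagonal on $(p,q)$-forms.} The endomorphism $\mathbf{J}$ is a derivation of degree zero extending $J$. Define $J$ on $1$-forms by $(J\alpha)(\zeta)=\alpha(J\zeta)$. Then $\mathbf{J}$ satisfies
\[
\mathbf{J}(\alpha\wedge\beta)=\mathbf{J}\alpha\wedge\beta+\alpha\wedge\mathbf{J}\beta.
\]
I would check this directly from Definition \ref{induced_J} on wedge products of $1$-forms, where it reduces to the Leibniz rule for the determinant expansion. With the convention that $\Omega^{1,0}$ is the $i$-eigenspace of $J$ on $1$-forms, $\mathbf{J}$ acts on $\Omega^{1,0}$ by $i$ and on $\Omega^{0,1}$ by $-i$. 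Hence for a decomposable form $\theta_1\wedge\dots\wedge\theta_p\wedge\bar\eta_1\wedge\dots\wedge\bar\eta_q$, the derivation property gives
\[
\mathbf{J}\alpha^{p,q}=i(p-q)\,\alpha^{p,q}.
\]
Locally, $\Omega^{p,q}$ is spanned by such forms over functions, and $\mathbf{J}$ is $C^\infty$-linear, so the identity holds on all of $\Omega^{p,q}(X)$.

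\textbf{Step 2: distinct eigenvalues.} On $\Omega^k(X)$ the types are $(p,k-p)$ for $0\le p\le k$, with eigenvalues $\lambda_p=i(2p-k)$, which are pairwise distinct. On all of $\Omega^\bullet$ the eigenvalue $\lambda$ does not determine $(p,q)$. However, the degree projection $\Pi^k$ onto $\Omega^k$ is not obviously a polynomial in $\mathbf{J}$, because $\mathbf{J}$ preserves degree.

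This is the main obstacle. I would resolve it by reading the statement degreewise: $\Pi^{p,q}$, as an endomorphism of $\Omega^{p+q}(X)$, lies in $\mathbb{C}[\mathbf{J}|_{\Omega^{p+q}}]$. This is the form in which it is used later, on $k$-forms. On the full algebra, $\Pi^{p,q}$ is the composite of a polynomial in $\mathbf{J}$ with the degree projection.

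\textbf{Step 3: interpolation formula.} Fix $k=p+q$ and set
\[
P_{p,q}(t)=\prod_{\substack{0\le r\le k\\ r\ne p}}\frac{t-i(2r-k)}{i(2p-2r)}.
\]
By Step 1, on $\alpha=\sum_r\alpha^{r,k-r}$ we get $P_{p,q}(\mathbf{J})\alpha=\sum_r P_{p,q}(\lambda_r)\alpha^{r,k-r}=\alpha^{p,q}$, since $P_{p,q}(\lambda_r)=\delta_{rp}$. Thus $\Pi^{p,q}=P_{p,q}(\mathbf{J})$ on $\Omega^k(X)$, which proves the lemma. Everything is pointwise and algebraic, so the same formula holds for forms of any regularity, which matters for the later Sobolev setting.
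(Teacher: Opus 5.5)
Your proposal is correct and is essentially the paper's proof: show that $\mathbf{J}$ acts by $i(p-q)$ on $(p,q)$-forms, then take the Lagrange interpolation polynomial over the types $r+s=k$. The paper gets the eigenvalue by evaluating directly on holomorphic/antiholomorphic vector fields rather than via the derivation property, and its polynomial also depends on $k$, so your degreewise reading is exactly what it proves — and that reading is necessary, since on all of $\Omega^{\bullet}(X)$ the types $(0,0)$ and $(1,1)$ share the eigenvalue $0$.
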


\begin{proof}
Let $\alpha = \sum _{p+q=k}\alpha ^{p,q} \in \Omega^k (X),$ where
$\alpha (\zeta_1,\dots,\zeta_k) =\alpha^{p,q} (\zeta_1,\dots,\zeta_k)$ for any sequence of vectors fields containing $p$ holomorphic ones and $q$ antiholomorphic ones. 

Note that on a sequence of vectors fields with $\zeta_{i_1},\dots,\zeta_{i_p}$ holomorphic and 
$\zeta_{j_1},\dots,\zeta_{j_q}$ antiholomorphic, we have that

\[(\mathbf{J} \alpha ) (\zeta_1,\dots,\zeta_k) = i(p-q) \alpha^{p,q} (\zeta_1,\dots,\zeta_k)\] as $J$ acts by $\pm i$ on the $\zeta_{i_l}$'s, resp.\ the $\zeta_{j_m}$'s.

Each term $\alpha^{p,q}$ vanishes on sequences of vectors of Hodge type different from $(p,q)$. So for any sequence of vectors each of which is either holomorphic or antiholomorphic, we get
\[
(\mathbf{J}\alpha ) (\zeta_1,\dots,\zeta_k)=
\sum _{p+q=k} i(p-q) \alpha^{p,q}  (\zeta_1,\dots,\zeta_k).
\]
By linearity this holds for any sequence of vectors, i.e.
\[
(\mathbf{J}\alpha ) =
\sum _{p+q=k} i(p-q) \alpha^{p,q}  .
\]

Define the polynomial 
\[
P^{p,q} (T) \coloneqq
\frac{
\prod _{(r,s)\neq (p,q)} (T - i(r-s))
}{
\prod _{(r,s)\neq (p,q)} (i(p-q) - i(r-s))
}
\]
where the products are taken over $r+s=k$. 

We should note that for $(r,s)\neq (p,q)$ with $r+s = p+q = k$ then 
$i(p-q) - i(r-s) \neq 0$, so the denominator is nonzero. 

We have $P^{p,q}(i(r-s)) = 1$ if $(r,s)=(p,q)$ and
$P^{p,q}(i(r-s)) = 0$ if $(r,s)\neq(p,q)$.

We get
\[
P^{p,q}(\mathbf{J}) (\alpha )= 
\sum _{r+s=k}
P^{p,q}(i(r-s))\alpha ^{r,s}
= \alpha ^{p,q}.
\]
Thus, 
\[
P^{p,q}(\mathbf{J}) = \Pi ^{p,q}
\]
is the projection onto forms of Hodge type $(p,q)$. 
\end{proof}

For the rest of this section, $X$ is $n$-dimensional ($n$ will be even). Let $g$ be a Riemannian metric on $X$ with Levi-Civita connection $\nabla$, and $\star$ be the Hodge-star operator. Let $J$ be a compatible almost-complex structure, meaning that $J$ is orthogonal for $g$ and $J^2=-\operatorname{Id}_{T_X}$, in particular $J$ is also anti-self-adjoint. 
For simplicity, we work with real-valued forms.  Note that the formal adjoint of the differential is $d^*=-\star d\star.$ 

The Levi-Civita connection acts on all bundles associated to the tangent bundle, compatibly with the metrics. (We don't discuss the normalization of the metric on $\bigwedge^kT^{\ast}X$ up to a rational constant.) 

Suppose in general $V$ is a real vector bundle with metric $G$ and compatible connection $\nabla _V$. In practice typically $V=\bigwedge ^kT^{\ast}X$ and $\nabla _V$ is the Levi-Civita connection. The metric is viewed as a bilinear form  $G:V\otimes V \rightarrow \mathbb{R}$.

The Levi-Civita connection and the connection of $V$ induce a connection on $T^{\ast}X\otimes V$ which is an operator
\[
\nabla _{T^{\ast}_X \otimes V} \coloneqq \nabla _{T^{\ast}_X} \otimes \operatorname{Id}_V + \operatorname{Id}_{T^{\ast}_X} \otimes \nabla _V \colon T^{\ast}_X \otimes V \rightarrow  T^{\ast}_X\otimes T^{\ast}_X \otimes V
\]
where the differential form value of the connection is the first in the right hand tensor product. On the other hand $g$ gives the bilinear form 
\[
tr_g  \colon T^{\ast}_X \otimes T^{\ast}_X \rightarrow \mathbb{R}
\]
hence we get a map $tr_g\otimes \operatorname{Id}_V \colon T^{\ast}_X \otimes T^{\ast}_X \otimes V \rightarrow V$. 
We claim that the formal adjoint of $\nabla _V$ is the 
operator 
\[
\nabla ^{\ast} _V \coloneqq - (tr_g\otimes \operatorname{Id}_V) \circ \nabla _{T^{\ast}_X \otimes V} .
\]
To see this, note that for sections $v$ of $V$ and $A$ of $T^{\ast}X\otimes V$ we
may define $G(v,A)$ as a section of $T^{\ast}X $, and since $G$ is parallel
we get 
\[
\nabla _{T^{\ast}X} G(v,A) = 
G(\nabla _V(v), A) + G(v,\nabla _{T^{\ast}X\otimes V}(A)).
\]
If $\alpha$ is a compactly supported section of $T^{\ast}X$ we have 
\[
\int _X tr_g \nabla _{T^{\ast}X}(\alpha ) dVol  = 0
\]
by the divergence theorem, as $tr_g \nabla _{T^{\ast}X}(\alpha ) dVol$ is the divergence of the metric dual vector field $\alpha^{\sharp}$.

Now $tr_gG(\nabla _V(v), A) = (g\otimes G)(\nabla _V(v),A)$
so this gives
\[
\int _X (g\otimes G)(\nabla _V v, A) dVol - 
\int _X G(v,\nabla ^{\ast}_V A)dVol 
\]
\[
= 
\int _X tr_g \nabla _{T^{\ast}X} (G(v,A))dVol = 0. 
\]
This shows that the operator $\nabla ^{\ast}_V$ is the formal adjoint of $\nabla _V$,
justifying the notation. If $\{ e_i\}$ is a unitary frame for $TX$ then 
\[
\nabla _V^{\ast}(A) = -\sum_i \iota _{e_i} \iota _{e_i} (\nabla _W A) 
\]
where $\iota _{e_i} : T^{\ast}X\otimes W \rightarrow W$ is contraction with $e_i$, here applied to the bundle $W=T^{\ast}X \otimes V$. 

This discussion will be applied with $V= \bigwedge^k T^*_X$. 
We will drop the subscripts for the Levi-Civita covariant derivative operators and their adjoints, and notate them simply by $\nabla$, resp.\ $\nabla ^{\ast}$. 

Let $\zeta_0,\zeta_1,\dots,\zeta_k, \eta \in \mathfrak{X}(X)$ be vector fields on $X$, $\alpha \in \Omega^k(X)$, and $A\in \Omega^0 \big(X, T^*_X \otimes \bigwedge^k T^*_X \big)$. Recall that on differential forms, the covariant derivative acts in the following way :

\begin{equation*}
    \begin{split}
        (\nabla\alpha)(\eta,\zeta_1,\dots,\zeta_k)&\coloneqq (\nabla_{\eta} \alpha)(\zeta_1,\dots,\zeta_k)\\
        &=\nabla_{\eta} \big(\alpha(\zeta_1,\dots,\zeta_k)\big)-\sum_{1\leq j\leq k} \alpha(\zeta_1,\dots,\nabla_{\eta} \zeta_j,\dots,\zeta_k).
    \end{split}
\end{equation*}
 
The formal adjoint $\nabla^* \colon \Omega^0 \big(X, T^*_X \otimes \bigwedge^k T^*_X \big) \to \Omega^k(X)$ is defined as
above by
\[
\nabla^* A=-\operatorname{tr}_g (\nabla A).
\] 
Let $(e_j)_{j=1}^n$ be a local orthonormal frame of $T_X.$ Then, we can write 
\[
(\nabla^* A)(\zeta_1,\dots,\zeta_k)=-\sum_{1\leq j \leq n} (\nabla_{e_j} A)(e_j,\zeta_1,\dots,\zeta_k).
\]

The rough Laplacian is therefore given as 

\begin{equation*}
    \begin{split}
(\nabla^* \nabla \alpha) (\zeta_1,\dots,\zeta_k)&=-\sum_{1\leq j \leq n} \big(\nabla_{e_j} (\nabla \alpha)\big)(e_j,\zeta_1,\dots,\zeta_k)\\
&=-\sum_{1\leq j \leq n} \big[\nabla_{e_j} ((\nabla \alpha)(e_j,\zeta_1,\dots,\zeta_k))\\
&-(\nabla \alpha)(\nabla_{e_j} e_j,\zeta_1,\dots,\zeta_k)\\
&-\sum_{1\leq i\leq k} (\nabla_{e_j} \alpha)(\zeta_1,\dots,\nabla_{e_j} \zeta_i,\dots,\zeta_k)\big]\\
&=-\sum_{1\leq j \leq n}\big[(\nabla_{e_j} \nabla_{e_j} \alpha)(\zeta_1,\dots,\zeta_k)+\\
&\sum_{1\leq i\leq k} (\nabla_{e_j} \alpha)(\zeta_1,\dots,\nabla_{e_j} \zeta_i,\dots,\zeta_k)\big]+\\
&\sum_{1\leq j \leq n}\big[(\nabla_{\nabla_{e_j} e_j} \alpha)(\zeta_1,\dots,\zeta_k)+\\
&\sum_{1\leq i\leq k} (\nabla_{e_j} \alpha)(\zeta_1,\dots,\nabla_{e_j} \zeta_i,\dots,\zeta_k)\big]\\
&=-\sum_{1\leq j \leq n}\big[(\nabla_{e_j} \nabla_{e_j} \alpha)(\zeta_1,\dots,\zeta_k)-(\nabla_{\nabla_{e_j} e_j} \alpha)(\zeta_1,\dots,\zeta_k)\big].
\end{split}
\end{equation*}

More succinctly, we can write \[\nabla^* \nabla \alpha = -\sum_{1\leq j \leq n}\big[(\nabla_{e_j} \nabla_{e_j} \alpha)-(\nabla_{\nabla_{e_j} e_j} \alpha)\big].\]

In addition, we have that 

\[d{\alpha}(\zeta_0,\dots,\zeta_k)=\sum_{0\leq j\leq k} (-1)^j (\nabla_{\zeta_j} \alpha)(\zeta_0,\dots,\hat{\zeta}_j,\dots,\zeta_k)\] and 

\[d^* {\alpha}(\zeta_1,\dots,\zeta_{k-1})=-\sum_{1\leq j\leq n}(\nabla_{e_j} \alpha)(e_j,\zeta_1,\dots,\zeta_{k-1}).\]

The extension of the curvature endomorphism of $\nabla$ to differential forms is given as $R^{\nabla}\alpha \coloneqq (d^{\nabla})^2 \alpha,$ where \[(R^{\nabla} \alpha)(\eta,\zeta_0) \coloneq R^{\nabla} (\eta,\zeta_0)\alpha =([\nabla_{\eta},\nabla_{\zeta_0}]-\nabla_{[\eta,\zeta_0]})\alpha.\] The Weitzenb{\"o}ck curvature operator is defined as \[(R\alpha)(\zeta_1,\dots,\zeta_k)=\sum_{\substack{1\leq i\leq k \\ 1\leq j \leq n}} (R^{\nabla}(e_j,\zeta_i)\alpha)(\zeta_1,\dots,e_j,\dots,\zeta_k),\] where indeed in each term of the latter sum $e_j$ is located in place $i$.

We are going to derive and use the following version of the Weitzenb{\"o}ck formula for differential forms to prove the Hodge decomposition of complex De Rham cohomology into $(p,q)$-harmonic forms. We usually remain with real coefficients in order to avoid notations with hermitian structures and complex conjugation. The passage to complex coefficients is only needed when defining the $(p,q)$ components as eigenspaces of $\mathbf{J}$.

Recall the $d$-Laplacian $\Delta_d \colon \Omega^k(X) \to \Omega^k(X),$ where $\Delta_d=dd^* + d^* d.$ For the formally identical Weitzenb{\"o}ck formula, see \cite{Petersen_book} (Theorem 50).

\begin{proposition}{(Weitzenb{\"o}ck formula)}\label{wf_thm}
For any $\alpha \in \Omega^k(X)$, \[\Delta_d \alpha=\nabla^* \nabla \alpha + R\alpha,\] where $R$ is the Weitzenb{\"o}ck curvature operator.
\end{proposition}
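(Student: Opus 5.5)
The plan is a direct pointwise computation at an arbitrary point $x\in X$, using a local orthonormal frame $(e_j)$ that is synchronous at $x$, i.e.\ $\nabla e_j(x)=0$. Both sides are tensorial, so it suffices to evaluate them at $x$ on vector fields $\zeta_1,\dots,\zeta_k$ that also satisfy $\nabla\zeta_i(x)=0$. With these choices every term involving $\nabla_{e_j}e_j$ or $\nabla_{e_j}\zeta_i$ drops out at $x$, and the rough Laplacian formula derived above reduces to
\[
(\nabla^*\nabla\alpha)(\zeta_1,\dots,\zeta_k)=-\sum_j(\nabla_{e_j}\nabla_{e_j}\alpha)(\zeta_1,\dots,\zeta_k).
\]
The brackets $[e_j,\zeta_i]$ also vanish at $x$, since the connection is torsion free.

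Next I would expand $d d^*\alpha$ and $d^* d\alpha$ at $x$ using the displayed formulas for $d$ and $d^*$ in terms of $\nabla$. For $d^*d\alpha$ I get
\[
-\sum_j (\nabla_{e_j} d\alpha)(e_j,\zeta_1,\dots,\zeta_k)=-\sum_j\Big[(\nabla_{e_j}\nabla_{e_j}\alpha)(\zeta_1,\dots,\zeta_k)+\sum_i(-1)^i(\nabla_{e_j}\nabla_{\zeta_i}\alpha)(e_j,\zeta_1,\dots,\hat\zeta_i,\dots,\zeta_k)\Big].
\]
Here the vanishing of first derivatives of the frame lets $\nabla_{e_j}$ pass through the alternating sum. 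For $dd^*\alpha$ I get
\[
\sum_i(-1)^{i-1}\nabla_{\zeta_i}(d^*\alpha)(\zeta_1,\dots,\hat\zeta_i,\dots,\zeta_k)=-\sum_i\sum_j(-1)^{i-1}(\nabla_{\zeta_i}\nabla_{e_j}\alpha)(e_j,\zeta_1,\dots,\hat\zeta_i,\dots,\zeta_k).
\]
Adding the two, the first group in $d^*d\alpha$ is exactly $\nabla^*\nabla\alpha$. The cross terms combine into
\[
\sum_{i,j}(-1)^{i-1}\big([\nabla_{e_j},\nabla_{\zeta_i}]\alpha\big)(e_j,\zeta_1,\dots,\hat\zeta_i,\dots,\zeta_k).
\]
Since $[e_j,\zeta_i](x)=0$, the commutator $[\nabla_{e_j},\nabla_{\zeta_i}]$ equals $R^\nabla(e_j,\zeta_i)$ at $x$.

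Finally, moving $e_j$ from the first slot back to position $i$ costs a sign $(-1)^{i-1}$, which cancels the sign in front. This yields $\sum_{i,j}(R^\nabla(e_j,\zeta_i)\alpha)(\zeta_1,\dots,e_j,\dots,\zeta_k)$, which is exactly $(R\alpha)(\zeta_1,\dots,\zeta_k)$ as defined.

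The main obstacle is sign bookkeeping. The convention for $d$, the sign in $d^*=-\star d\star$ (equivalently the trace formula), and the ordering convention $R^\nabla(\eta,\zeta)=[\nabla_\eta,\nabla_\zeta]-\nabla_{[\eta,\zeta]}$ must all align so that the cross terms give $+R$ rather than $-R$. I would check this first in the case $k=1$, where the result must reduce to Bochner's formula $\Delta_d\alpha=\nabla^*\nabla\alpha+\mathrm{Ric}(\alpha)$, and then run the general computation.

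A secondary technical point is justifying that the second covariant derivatives in the expression for $dd^*$ pick up no extra terms from the frame. This requires the frame to be synchronous to first order at $x$, which is exactly how $(e_j)$ and the $\zeta_i$ were chosen at the start.
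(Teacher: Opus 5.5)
Your proposal is correct and is essentially the paper's own argument: a pointwise computation at a point where the frame has vanishing first covariant derivatives, expanding $dd^*\alpha$ and $d^*d\alpha$, and identifying the cross terms as $R^{\nabla}(e_j,\zeta_i)\alpha$ after the sign $(-1)^{i-1}$ from moving $e_j$ into slot $i$. The only differences are cosmetic: the paper uses a normal coordinate frame and evaluates directly on $e_1,\dots,e_k$ rather than on auxiliary fields $\zeta_i$ with $\nabla\zeta_i(x)=0$, and your signs, including the $k=1$ Bochner check, are right.
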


\begin{proof}
Since both sides of the equation we want to prove are tensorial, it suffices to verify it at an arbitrary point $p\in X.$ Assume that $p$ is the center of a normal coordinate chart with associated normal coordinate frame $(e_j)_{j=1}^n$ of $T_X.$ Observe that at $p$, we have that

\begin{equation*}
    \begin{split}
        dd^* \alpha(e_1,\dots,e_k)&=\sum_{1\leq i\leq k} (-1)^{i+1} (\nabla_{e_i} d^* \alpha)(e_1,\dots,\hat{e}_i,\dots,e_k)\\
        &=\sum_{\substack{1\leq i\leq k \\1\leq j\leq n}} (-1)^i (\nabla_{e_i} \nabla_{e_j} \alpha) (e_j,e_1,\dots,\hat{e}_i,\dots,e_k)\\
        &=-\sum_{\substack{1\leq i\leq k \\1\leq j\leq n}} (\nabla_{e_i} \nabla_{e_j} \alpha) (e_1,\dots,e_{i-1},e_j,e_{i+1},\dots,e_k).
    \end{split}
\end{equation*}

We also have that at $p$,

\begin{equation*}
    \begin{split}
        d^* d \alpha(e_1,\dots,e_k)&=-\sum_{1\leq j\leq n} (\nabla_{e_j}  d \alpha)(e_j,e_1,\dots,e_k)\\
        &=-\sum_{1\leq j\leq n} (\nabla_{e_j} \nabla_{e_j} \alpha)(e_1,\dots,e_k)\\
        &-\sum_{\substack{1\leq i\leq k \\1\leq j\leq n}} (-1)^i (\nabla_{e_j} \nabla_{e_i} \alpha)(e_j,e_1,\dots,\hat{e}_i,\dots,e_k)\\
        &=(\nabla^* \nabla \alpha)(e_1,\dots,e_k)+\\
        &\sum_{\substack{1\leq i\leq k \\1\leq j\leq n}} (\nabla_{e_j} \nabla_{e_i} \alpha)(e_1,\dots,e_{i-1},e_j,e_{i+1},\dots,e_k).
    \end{split}
\end{equation*}

Hence, 

\begin{equation*}
    \begin{split}
        (\Delta_d \alpha)(e_1,\dots,e_k)&=(\nabla^* \nabla \alpha)(e_1,\dots,e_k)+\\
        &\sum_{\substack{1\leq i\leq k \\1\leq j\leq n}} \big((\nabla_{e_j} \nabla_{e_i}-\nabla_{e_i} \nabla_{e_j})\alpha\big) (e_1,\dots,e_{i-1},e_j,e_{i+1},\dots,e_k)\\
        &=(\nabla^* \nabla \alpha)(e_1,\dots,e_k)+\\
        &\sum_{\substack{1\leq i\leq k \\1\leq j\leq n}} 
        \big(R^{\nabla}(e_j,e_i)\alpha\big) (e_1,\dots,e_{i-1},e_j,e_{i+1},\dots,e_k)\\
        &=\big(\nabla^* \nabla \alpha +R \alpha \big)(e_1,\dots,e_k).
    \end{split}
\end{equation*}
\end{proof}

The graded commutator of a pair of graded linear operators $L_1,L_2$ on $\Omega^{\bullet}(X)$ of respective degrees $l_1,l_2$ is given as \[[L_1,L_2] \coloneqq L_1 L_2 - (-1)^{l_1 l_2} L_2 L_1.\] The $d$-Laplacian and $\mathbf{J}$ are graded linear operators of degree $0$, so $[\Delta_d,\mathbf{J}]=\Delta_d \mathbf{J} - \mathbf{J} \Delta_d.$ 

From this point on, we assume that $(X,g,J)$ is a compact K{\"a}hler manifold with K{\"a}hler form $\omega_J.$

\begin{proposition}\label{DeltaJ}
The $d$-Laplacian commutes with $\mathbf{J}$, i.e.\ \[[\Delta_d,\mathbf{J}]=0.\]
\end{proposition}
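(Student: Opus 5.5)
We use the Weitzenb{\"o}ck formula (Proposition \ref{wf_thm}) to write $\Delta_d=\nabla^*\nabla+R$. Then it suffices to show separately that $[\nabla^*\nabla,\mathbf{J}]=0$ and $[R,\mathbf{J}]=0$. The underlying observation is that $\mathbf{J}$ is a derivation-type endomorphism of $\bigwedge^k T^*_X$. It is built tensorially from $J$ by the formula in Definition \ref{induced_J}. Since $(X,g,J)$ is K{\"a}hler we have $\nabla J=0$, and this will imply that $\mathbf{J}$ is parallel as an endomorphism of $\Omega^\bullet(X)$.

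\textbf{Step 1: $[\nabla_\eta,\mathbf{J}]=0$ for every vector field $\eta$.} Using the formula for $(\nabla_\eta\alpha)(\zeta_1,\dots,\zeta_k)$ recalled above, we compute $(\nabla_\eta(\mathbf{J}\alpha))(\zeta_1,\dots,\zeta_k)$. Expanding $\nabla_\eta$ of the function $\sum_j\alpha(\zeta_1,\dots,J\zeta_j,\dots,\zeta_k)$ and using $\nabla_\eta(J\zeta_j)=J\nabla_\eta\zeta_j+(\nabla_\eta J)\zeta_j$, everything reorganizes into
\[
(\mathbf{J}\nabla_\eta\alpha)(\zeta_1,\dots,\zeta_k)+\sum_j\alpha(\zeta_1,\dots,(\nabla_\eta J)\zeta_j,\dots,\zeta_k).
\]
The cross terms in $\nabla_\eta\zeta_i$ with $i\ne j$ cancel in pairs. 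The last sum vanishes because $\nabla J=0$. Hence $\nabla_\eta\mathbf{J}=\mathbf{J}\nabla_\eta$.

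\textbf{Step 2: the rough Laplacian.} From the expression $\nabla^*\nabla\alpha=-\sum_j[\nabla_{e_j}\nabla_{e_j}\alpha-\nabla_{\nabla_{e_j}e_j}\alpha]$, Step 1 immediately gives $\nabla^*\nabla\mathbf{J}=\mathbf{J}\nabla^*\nabla$. The operator $\mathbf{J}$ acts fiberwise, so it commutes with multiplication by the frame coefficients and the choice of local frame plays no role.

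\textbf{Step 3: the curvature term.} By Step 1, $R^\nabla(\eta,\zeta)=[\nabla_\eta,\nabla_\zeta]-\nabla_{[\eta,\zeta]}$ commutes with $\mathbf{J}$ on forms. We then need to see that the contraction defining $R\alpha$, which inserts $e_j$ in slot $i$ and sums over $j$, also commutes with $\mathbf{J}$. We will write $R\alpha$ invariantly as $\sum_j\sum_i (e^j\wedge\cdot)$-type insertions, namely $R=\sum_{j,l} e^l\wedge\iota_{e_j}R^\nabla(e_l,e_j)$ up to sign and ordering conventions. The operator $\sum_l e^l\otimes e_l$ is $J$-invariant, since $J$ is orthogonal and hence preserves the identity tensor. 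Concretely, we substitute the unitary frame $(Je_j)$ for $(e_j)$ in the sum, which is allowed by frame independence. Using $J$-equivariance of the curvature endomorphism, $R^\nabla(J\cdot,J\cdot)=R^\nabla(\cdot,\cdot)$ (the K{\"a}hler identity, from $[R^\nabla,J]=0$ on $T_X$), together with $\mathbf{J}$ being a derivation, we verify $(R\mathbf{J}\alpha)=\mathbf{J}(R\alpha)$.

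This third step is the main obstacle. The bookkeeping of where $J$ lands, whether on the inserted $e_j$, on the remaining $\zeta$'s, or inside $R^\nabla(e_j,\zeta_i)$, is delicate. We will handle it by expanding $\mathbf{J}R\alpha-R\mathbf{J}\alpha$ term by term. Terms where $J$ hits a $\zeta_l$ with $l\ne i$ cancel directly because $R^\nabla$ commutes with $\mathbf{J}$. The remaining terms involve $J$ acting on $\zeta_i$ versus on the inserted $e_j$. These are matched by orthogonality of $J$, namely $\sum_j g(J\zeta,e_j)e_j=-\sum_j g(\zeta,e_j)Je_j$, combined with $J$-invariance of $R^\nabla(\cdot,\cdot)$ in its vector arguments.
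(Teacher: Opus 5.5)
Your proposal is correct and follows essentially the same route as the paper: the Weitzenb{\"o}ck formula, $\nabla\mathbf{J}=0$ for the rough Laplacian, and a term-by-term expansion of $R\mathbf{J}\alpha-\mathbf{J}R\alpha$ whose leftover terms are killed by the K{\"a}hler curvature identity $R^\nabla(e_j,J\zeta_i)=-R^\nabla(Je_j,\zeta_i)$ together with re-indexing over the frame $(Je_j)$. That identity is equivalent to your $R^\nabla(J\cdot,J\cdot)=R^\nabla$, which needs the pair symmetry of $Rm$ in addition to $[R^\nabla,J]=0$. One slip: your identity $\sum_j g(J\zeta,e_j)e_j=-\sum_j g(\zeta,e_j)Je_j$ is false as written, since the left side is $J\zeta$ and the right side is $-J\zeta$; what you actually need is the antisymmetry $\sum_j Je_j\otimes e_j=-\sum_j e_j\otimes Je_j$, which is exactly what the frame substitution $e_j\mapsto Je_j$ provides.
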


\begin{proof}
We show that $[\nabla^* \nabla,\mathbf{J}]=0$ and that $[R,\mathbf{J}]=0,$ c.f.\ Proposition \ref{wf_thm}. Let $(e_j)_{j=1}^n$ be a local orthonormal frame of $T_X$, and $\alpha \in \Omega^k(X).$ Using the Leibniz rule and the K{\"a}hler condition $\nabla J=0,$ which implies that $\nabla \mathbf{J}=0,$ we compute

\begin{equation*}
    \begin{split}
        (\nabla^* \nabla \mathbf{J})\alpha &=\nabla^* \nabla (\mathbf{J}\alpha)\\
        &=-\sum_{1\leq j\leq n} \big(\nabla_{e_j} \nabla_{e_j} (\mathbf{J}\alpha)-\nabla_{\nabla_{e_j} e_j} (\mathbf{J}\alpha)\big)\\
        &=-\sum_{1\leq j\leq n} \big(\nabla_{e_j} (\mathbf{J} \nabla_{e_j} \alpha)-\mathbf{J}\nabla_{\nabla_{e_j} e_j} \alpha\big)\\
        &=-\sum_{1\leq j\leq n} \big(\mathbf{J}\nabla_{e_j} \nabla_{e_j} \alpha-\mathbf{J}\nabla_{\nabla_{e_j} e_j} \alpha\big)\\
        &=(\mathbf{J} \nabla^* \nabla)\alpha,
    \end{split}
\end{equation*}
i.e.\ $[\nabla^* \nabla,\mathbf{J}]=0.$

Now, we tackle the proof of $[R,\mathbf{J}]=0$. For the rest of the proof, $\nabla$ is the induced connection on $\bigwedge^k T^*_X$ unless otherwise stated. Moreover, we view $\alpha$ as belonging to the space of $0$-forms with values in $\bigwedge^k T^*_X,$ notated $\alpha \in \Omega^0 \Big(X,\bigwedge^k T^*_X \Big).$ In this way, we have the identity $(d^{\nabla})^2 \alpha = R^{\nabla} \alpha \in  \Omega^2 \Big(X,\bigwedge^k T^*_X \Big).$ Indeed, here $R^{\nabla} \in \Omega^2 \Big(X,\operatorname{End}\big(\bigwedge^k T^*_X\big) \Big).$ Since $d^{\nabla} J=0$ because $J$ is K{\"a}hler, where this time $\nabla$ is the original Levi-Civita connection, it follows that $d^{\nabla} \mathbf{J} =0$. By the Leibniz rule, \[R^{\nabla} (\mathbf{J}\alpha)=(d^{\nabla})^2 (\mathbf{J}\alpha) =\mathbf{J} \circ (R^{\nabla} \alpha),\] where for any $\zeta, \eta \in \mathfrak{X}(X),$ $R^{\nabla} (\zeta,\eta) \in \operatorname{End}\big(\bigwedge^k T^*_X\big),$ so in $\mathbf{J} \circ (R^{\nabla} \alpha)$, we should think of $\mathbf{J}$ as if acting on the $k$-form $R^{\nabla} (\zeta,\eta) \alpha.$ From these observations, it follows that

\begin{equation*}
    \begin{split}
        \big(R (\mathbf{J}\alpha)\big)(\zeta_1,\dots,\zeta_k) &=\sum_{\substack{1\leq i \leq k \\ 1\leq j \leq n}} \big(R^{\nabla}(e_j,\zeta_i) (\mathbf{J}\alpha)\big) (\zeta_1,\dots,e_j,\dots,\zeta_k)\\
        &=\sum_{\substack{1\leq i \leq k \\ 1\leq j \leq n}} \Big(\mathbf{J} \big(R^{\nabla}(e_j,\zeta_i) \alpha\big)\Big)(\zeta_1,\dots,e_j,\dots,\zeta_k)\\
        &=\sum_{\substack{1\leq i \leq k \\ 1\leq j \leq n}} \Big[\sum_{l<i} \big(R^{\nabla}(e_j,\zeta_i) \alpha\big)(\zeta_1,\dots,J\zeta_l,\dots,e_j,\dots,\zeta_k)+\\
        &\big(R^{\nabla}(e_j,\zeta_i) \alpha\big)(\zeta_1,\dots,Je_j,\dots,\zeta_k)+\\
        &\sum_{l>i} \big(R^{\nabla}(e_j,\zeta_i) \alpha\big)(\zeta_1,\dots,e_j,\dots,J\zeta_l,\dots,\zeta_k)\Big].
    \end{split}
\end{equation*}

Next, we note that 

\begin{equation*}
    \begin{split}
        \big(\mathbf{J}(R\alpha)\big)(\zeta_1,\dots,\zeta_k) &=\sum_{1\leq l \leq k} (R\alpha) (\zeta_1,\dots,J\zeta_l,\dots,\zeta_k)\\
        &=\sum_{\substack{1\leq l \leq k \\ 1\leq j \leq n}} \Big[\sum_{i<l} \big(R^{\nabla}(e_j,\zeta_i) \alpha\big)(\zeta_1,\dots,e_j,\dots,J\zeta_l,\dots,\zeta_k)+\\
        &\big(R^{\nabla}(e_j,J\zeta_l) \alpha\big)(\zeta_1,\dots,e_j,\dots,\zeta_k)+\\
        &\sum_{i>l} \big(R^{\nabla}(e_j,\zeta_i) \alpha\big)(\zeta_1,\dots,J\zeta_l,\dots,e_j,\dots,\zeta_k)\Big].
    \end{split}
\end{equation*}

Based on the previous $2$ computations, we know that 

\begin{equation*}
    \begin{split}
        \big(R(\mathbf{J} \alpha)-\mathbf{J}(R\alpha)\big)(\zeta_1,\dots,\zeta_k) &=\sum_{\substack{1\leq i \leq k\\ 1\leq j \leq n}} \Big[\big(R^{\nabla}(e_j,\zeta_i) \alpha\big)(\zeta_1,\dots,Je_j,\dots,\zeta_k)\\
        &-\big(R^{\nabla}(e_j,J\zeta_i) \alpha\big)(\zeta_1,\dots,e_j,\dots,\zeta_k)\Big].
    \end{split}
\end{equation*}

Using (16) on page 32 of \cite{Moroianu}, we see that the Riemann curvature tensor, $Rm$, satisfies \[Rm(e_j,J\zeta_i,\rho,\mu)=-Rm(Je_j,\zeta_i,\rho,\mu)\] for all vector fields $\rho, \mu$, and all $i,j$. But the latter can happen iff $-R^{\nabla}(e_j,J\zeta_i)=R^{\nabla}(Je_j,\zeta_i).$ Notice that $(Je_j)_{j=1}^n$ is also a local orthonormal frame of $T_X.$ Hence, the right hand side of the expression

\[-\big(R^{\nabla}(e_j,J\zeta_i) \alpha\big)(\zeta_1,\dots,e_j,\dots,\zeta_k)=\big(R^{\nabla}(Je_j,\zeta_i) \alpha\big)(\zeta_1,\dots,e_j,\dots,\zeta_k)\] can be rewritten relative to $(Je_j)_{j=1}^n$ in the following way : 

\[-\big(R^{\nabla}(e_j,\zeta_i) \alpha\big)(\zeta_1,\dots,Je_j,\dots,\zeta_k).\] Immediately, we obtain that 

\begin{equation*}
    \begin{split}
        \big(R(\mathbf{J} \alpha)-\mathbf{J}(R\alpha)\big)(\zeta_1,\dots,\zeta_k) &=\sum_{\substack{1\leq i \leq k\\ 1\leq j \leq n}} \Big[\big(R^{\nabla}(e_j,\zeta_i) \alpha\big)(\zeta_1,\dots,Je_j,\dots,\zeta_k)\\
        &-\big(R^{\nabla}(e_j,J\zeta_i) \alpha\big)(\zeta_1,\dots,e_j,\dots,\zeta_k)\Big]\\
        &=0.
    \end{split}
\end{equation*}

\end{proof}

\begin{corollary}\label{DeltaJm}
For any $m\in \mathbb{N}$, let $\mathbf{J}^m$ be the $m$-time self-composition of $\mathbf{J}$. Then, \[[\Delta_d,\mathbf{J}^m]=0.\]
\end{corollary}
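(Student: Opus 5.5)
The plan is to argue by induction on $m$, with Proposition \ref{DeltaJ} as the only real input. For $m=0$ we interpret $\mathbf{J}^0=\operatorname{Id}$, and the bracket $[\Delta_d,\operatorname{Id}]=\Delta_d-\Delta_d$ vanishes trivially. For $m=1$ the statement is exactly Proposition \ref{DeltaJ}.

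For the inductive step, suppose $[\Delta_d,\mathbf{J}^m]=0$. Both $\Delta_d$ and $\mathbf{J}$ have degree $0$, so $\mathbf{J}^m$ also has degree $0$. The graded commutator therefore reduces to the ordinary commutator, and no signs need to be tracked. We then compute
\[
\Delta_d\mathbf{J}^{m+1}=(\Delta_d\mathbf{J})\mathbf{J}^m=\mathbf{J}\Delta_d\mathbf{J}^m=\mathbf{J}\mathbf{J}^m\Delta_d=\mathbf{J}^{m+1}\Delta_d .
\]
The second equality uses Proposition \ref{DeltaJ}, and the third uses the inductive hypothesis. Equivalently, this is the derivation identity $[\Delta_d,AB]=[\Delta_d,A]B+A[\Delta_d,B]$ with $A=\mathbf{J}$ and $B=\mathbf{J}^m$.

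There is no genuine obstacle. The only point to check is that every composition is well defined on $\Omega^{\bullet}(X)$. This holds because $\mathbf{J}$ preserves the degree $k$ of a form, so all operators act on the same space $\Omega^k(X)$. As a by-product, combining the corollary with Lemma \ref{poly_J} and linearity shows that $\Delta_d$ commutes with every polynomial in $\mathbf{J}$, and in particular with each projection $\Pi^{p,q}$. This consequence is presumably why the corollary is recorded.
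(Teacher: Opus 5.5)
Your proof is correct and follows essentially the same route as the paper: induction on $m$, with Proposition \ref{DeltaJ} as the base case and the inductive hypothesis used to move $\Delta_d$ past $\mathbf{J}^m$. The only difference is cosmetic: the paper writes $\mathbf{J}^{m+1}=\mathbf{J}^m\mathbf{J}$ and concludes with $\mathbf{J}^m[\Delta_d,\mathbf{J}]=0$, whereas you factor it as $\mathbf{J}\,\mathbf{J}^m$.
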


\begin{proof}
We prove the statement by induction. 
The base step of the induction is covered by Proposition \ref{DeltaJ}. Assume that the claim holds true for a given $m.$ Then, 
\begin{equation*}
    \begin{split}
        [\Delta_d,\mathbf{J}^{m+1}]&=\Delta_d\mathbf{J}^{m+1} - \mathbf{J}^{m+1} \Delta_d\\
        &=\Delta_d\mathbf{J}^m \mathbf{J}-\mathbf{J}^m \mathbf{J} \Delta_d\\
        &=\mathbf{J}^m \Delta_d \mathbf{J}-\mathbf{J}^m \mathbf{J} \Delta_d\\
        &=\mathbf{J}^m [\Delta_d,\mathbf{J}]\\
        &=0.
    \end{split}
\end{equation*}
\end{proof}

The following is now an immediate consequnce of the bilinearity of the commutator and Corollary \ref{DeltaJm}.

\begin{corollary}\label{DeltaJP}
For any polynomial $\mathbf{p} \in \mathbb{C}[\mathbf{J}]$, \[[\Delta_d,\mathbf{p}]=0.\]
\end{corollary}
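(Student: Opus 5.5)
The plan is to reduce the statement to Corollary \ref{DeltaJm} using only the bilinearity of the graded commutator. Since $\Delta_d$ and every element of $\mathbb{C}[\mathbf{J}]$ are operators of degree $0$, the graded commutator is the ordinary commutator $[\Delta_d,L]=\Delta_d L - L\Delta_d$. This is $\mathbb{C}$-linear in the second slot: for operators $L_1,L_2$ and scalars $c_1,c_2\in\mathbb{C}$ we have $[\Delta_d,c_1L_1+c_2L_2]=c_1[\Delta_d,L_1]+c_2[\Delta_d,L_2]$. Here $\Delta_d$ is extended $\mathbb{C}$-linearly to complex forms, so it commutes with scalar multiplication.

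Write $\mathbf{p}=\sum_{m=0}^{N} c_m\mathbf{J}^m$ with $c_m\in\mathbb{C}$, using the convention $\mathbf{J}^0=\operatorname{Id}$. By linearity,
\[
[\Delta_d,\mathbf{p}]=\sum_{m=0}^N c_m[\Delta_d,\mathbf{J}^m].
\]
The term $m=0$ vanishes trivially because $[\Delta_d,\operatorname{Id}]=0$. For $m\geq 1$, each term vanishes by Corollary \ref{DeltaJm}. Hence $[\Delta_d,\mathbf{p}]=0$.

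There is no real obstacle. The only points needing care are including the constant term $\mathbf{J}^0$ explicitly, since Corollary \ref{DeltaJm} is phrased for $m\in\mathbb{N}$, and noting that complex coefficients are harmless because $\Delta_d$ is complex-linear on complexified forms. As an immediate consequence, combined with Lemma \ref{poly_J}, the projections $\Pi^{p,q}$ commute with $\Delta_d$.
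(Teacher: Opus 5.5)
Your proof is correct and follows the paper's own argument: the paper deduces this statement as an immediate consequence of the bilinearity of the commutator together with Corollary \ref{DeltaJm}. Your explicit handling of the constant term $\mathbf{J}^0=\operatorname{Id}$ and of the complex coefficients simply spells out details the paper leaves implicit.
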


\begin{theorem}\label{Hodgethm1}
Let 
\[
\mathcal{H}^k(X) \coloneqq \ker{(\Delta_d)}
\] 
be the vector space of $d$-harmonic complex $k$-forms, and 
\[
\mathcal{H}^{p,q}(X) \coloneqq \mathcal{H}^k(X)
\otimes_{\mathbb{R}}\! \mathbb{C} \cap \Omega^{p,q}(X).
\] 
Then, 
\[
\mathcal{H}^k(X)\otimes_{\mathbb{R}}\! \mathbb{C}=\bigoplus_{p+q=k} \mathcal{H}^{p,q}(X).
\] 
\end{theorem}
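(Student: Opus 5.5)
The plan is to combine Lemma \ref{poly_J} with Corollary \ref{DeltaJP}. First extend $\Delta_d$ complex-linearly to complex $k$-forms, so that $\mathcal{H}^k(X)\otimes_{\mathbb{R}}\mathbb{C}$ is exactly the kernel of $\Delta_d$ on $\Omega^k(X)\otimes\mathbb{C}$. This holds because $\Delta_d$ is a real operator: $\Delta_d(\alpha+i\beta)=\Delta_d\alpha+i\Delta_d\beta$ vanishes iff both real and imaginary parts are harmonic. Similarly, $\mathbf{J}$ is extended complex-linearly, and Proposition \ref{DeltaJ} and its corollaries persist for the complexified operators by linearity.

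The inclusion $\bigoplus_{p+q=k}\mathcal{H}^{p,q}(X)\subseteq \mathcal{H}^k(X)\otimes\mathbb{C}$ is immediate, since each $\mathcal{H}^{p,q}(X)$ is a subspace and the sum of harmonic forms is harmonic. The sum is direct because the spaces $\Omega^{p,q}(X)$ with $p+q=k$ form a direct sum decomposition of complex $k$-forms.

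For the reverse inclusion, take $\alpha\in\mathcal{H}^k(X)\otimes\mathbb{C}$ and write $\alpha=\sum_{p+q=k}\alpha^{p,q}$ with $\alpha^{p,q}=\Pi^{p,q}\alpha$. By Lemma \ref{poly_J}, $\Pi^{p,q}=P^{p,q}(\mathbf{J})\in\mathbb{C}[\mathbf{J}]$ (on $k$-forms), so Corollary \ref{DeltaJP} gives
\[
\Delta_d\alpha^{p,q}=\Delta_d P^{p,q}(\mathbf{J})\alpha=P^{p,q}(\mathbf{J})\Delta_d\alpha=0.
\]
Hence each $\alpha^{p,q}$ lies in $\mathcal{H}^k(X)\otimes\mathbb{C}\cap\Omega^{p,q}(X)=\mathcal{H}^{p,q}(X)$, proving the decomposition.

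The only point needing care is the following. The polynomial $P^{p,q}$ in Lemma \ref{poly_J} depends on the degree $k$. However, both $\Delta_d$ and $\mathbf{J}$ preserve degree, so we may restrict everything to $\Omega^k(X)\otimes\mathbb{C}$, where the commutation from Corollary \ref{DeltaJP} still holds. Beyond this, no real obstacle is expected: the analytic content, the K\"ahler identity $[\Delta_d,\mathbf{J}]=0$, has already been established in Proposition \ref{DeltaJ}.
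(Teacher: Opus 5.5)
Your proposal is correct and follows the paper's proof essentially verbatim: write $\alpha=\sum_{p+q=k}\Pi^{p,q}\alpha$, use Lemma~\ref{poly_J} and Corollary~\ref{DeltaJP} to commute $\Delta_d$ past $\Pi^{p,q}=P^{p,q}(\mathbf{J})$, and conclude that each component is harmonic, with the other inclusion being elementary. Your remarks on complexification, on directness of the sum, and on working in degree $k$ (where $P^{p,q}$ is defined) are sound and make explicit what the paper leaves implicit.
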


\begin{proof}
We explain the containment 
$\mathcal{H}^k(X)\otimes_{\mathbb{R}}\!\mathbb{C} \subseteq \bigoplus_{p+q=k} \mathcal{H}^{p,q}(X).$ The reverse containment is basic linear algebra: as $\mathcal{H}^{p,q}(X) \subseteq \mathcal{H}^k(X) \otimes_{\mathbb{R}}\!\mathbb{C},$ it follows that $\bigoplus_{p+q=k} \mathcal{H}^{p,q}(X) \subseteq \mathcal{H}^k(X)\otimes_{\mathbb{R}}\!\mathbb{C}$. Let $\alpha=\sum_{p+q=k} \alpha^{p,q} \in \mathcal{H}^k(X)
\otimes_{\mathbb{R}}\!\mathbb{C}$. By Lemma \ref{poly_J} and Corollary \ref{DeltaJP}, we have that $[\Delta_d,\Pi^{p,q}]=0.$ So, \[\Delta_d \alpha^{p,q}=\Delta_d \Pi^{p,q} \alpha=\Pi^{p,q} \Delta_d \alpha=0,\] and thus $\alpha^{p,q} \in \mathcal{H}^{p,q}(X)$ so that $\alpha \in \bigoplus_{p+q=k} \mathcal{H}^{p,q}(X).$ 
\end{proof}

Recall the following important theorem of Riemannian geometry. The proof of Theorem \ref{Hodgethm2} can be found in various books on differential and complex geometry. 

\begin{theorem}{(Hodge isomorphism theorem, Proposition 9.3 \cite{Moroianu})}\label{Hodgethm2}
There is an isomorphism of vector spaces \[H^k_{DR}(X) \xrightarrow{\simeq} \mathcal{H}^k(X).\]
\end{theorem}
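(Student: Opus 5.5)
The plan is the classical Hodge-theoretic one. We work on the compact Riemannian manifold $(X,g)$ with real coefficients; the complex statement follows by tensoring with $\mathbb{C}$. The map will send a de Rham class $[\beta]$ to the harmonic projection of any representative $\beta$. To make sense of this, we first establish the $L^2$-orthogonal decomposition
\[
\Omega^k(X)=\mathcal{H}^k(X)\oplus d\Omega^{k-1}(X)\oplus d^*\Omega^{k+1}(X).
\]

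The three pieces are mutually orthogonal. Integrating by parts on a closed manifold gives
\[
\langle \Delta_d\alpha,\alpha\rangle_{L^2}=\|d\alpha\|^2+\|d^*\alpha\|^2 .
\]
Hence $\alpha\in\mathcal{H}^k(X)$ if and only if $d\alpha=0$ and $d^*\alpha=0$. Moreover, $\langle d\mu,d^*\nu\rangle=\langle d^2\mu,\nu\rangle=0$. The real content is that these pieces exhaust $\Omega^k(X)$. For this we use that $\Delta_d$ is a second-order elliptic, formally self-adjoint operator. The Weitzenb\"ock formula, Proposition \ref{wf_thm}, shows its principal symbol equals that of $\nabla^*\nabla$, namely $|\xi|^2\operatorname{Id}$, which gives ellipticity directly.

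Standard elliptic theory on a compact manifold then supplies three facts. Gårding's inequality $\|\alpha\|_{W^{1,2}}^2\le C(\langle\Delta_d\alpha,\alpha\rangle+\|\alpha\|^2)$ follows from the Weitzenb\"ock formula, since $R$ is a bounded zeroth-order term. Combined with Rellich compactness, it shows that $\mathcal{H}^k(X)$ is finite-dimensional. Elliptic regularity shows that weak solutions of $\Delta_d u=f$ with $f$ smooth are smooth. Finally, the Fredholm alternative gives $\operatorname{im}\Delta_d=(\mathcal{H}^k)^\perp$ on smooth forms.

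From these we obtain the Green operator $G$, inverting $\Delta_d$ on $(\mathcal{H}^k)^\perp$. Every $\beta$ then decomposes as
\[
\beta=h(\beta)+d(d^*G\beta)+d^*(dG\beta),
\]
where $h$ denotes orthogonal projection onto $\mathcal{H}^k(X)$.

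Next, take $\beta$ closed. Then $d^*dG\beta$ is both closed and lies in $\operatorname{im}d^*$. Consequently
\[
\|d^*dG\beta\|^2=\langle dG\beta,\,d\,d^*dG\beta\rangle=0,
\]
so $d^*dG\beta=0$ and $\beta=h(\beta)+d(d^*G\beta)$. This shows the map $[\beta]\mapsto h(\beta)$ is well defined, because $h$ kills exact forms, and that it is surjective, because harmonic forms are closed and map to themselves. It is injective as well: if $h(\beta)=0$, then $\beta$ is exact. Both the map and its inverse $\alpha\mapsto[\alpha]$ are linear, so we obtain the isomorphism $H^k_{DR}(X)\xrightarrow{\simeq}\mathcal{H}^k(X)$.

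The main obstacle is the analytic step: elliptic regularity and the Fredholm property of $\Delta_d$, meaning closed range and finite-dimensional kernel. I would first try to derive both from the Weitzenb\"ock identity, obtaining Gårding's inequality as above, then apply Rellich together with a Lax–Milgram argument on $W^{1,2}$ for the equation $(\Delta_d+1)u=f$. Smoothness would then follow from difference quotients in local charts. Everything else is formal linear algebra.
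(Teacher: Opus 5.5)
Your proposal is correct, and it is the standard argument behind the result, which the paper does not prove but cites from Moroianu (Prop.\ 9.3): you get the decomposition $\beta=h(\beta)+dd^*G\beta+d^*dG\beta$ from the elliptic theory of $\Delta_d$, then show that the coexact part $d^*dG\beta$ of a closed form is closed and coexact, hence zero by the norm computation. For comparison, the paper's own low-regularity analogue, Corollary \ref{Hodge_thm}, avoids the Green operator: it derives the orthogonal decomposition from the closedness of $Im(d)$ in $L^2$ cohomology and Hilbert-space orthogonality, and uses elliptic regularity only afterwards, to study the harmonic forms themselves.
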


\begin{theorem}\label{Betti}
For any $l \geq 0$, the Betti numbers satisfy $b_{2l+1}(X) \in 2\mathbb{Z}_{\geq 0}.$
\end{theorem}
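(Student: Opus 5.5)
Set $k=2l+1$. By Theorem \ref{Hodgethm2}, $b_k(X)=\dim_{\mathbb{R}}\mathcal{H}^k(X)=\dim_{\mathbb{C}}\mathcal{H}^k(X)\otimes_{\mathbb{R}}\mathbb{C}$. Theorem \ref{Hodgethm1} then gives
\[
b_k(X)=\sum_{p+q=k}\dim_{\mathbb{C}}\mathcal{H}^{p,q}(X).
\]
The plan is to pair $\mathcal{H}^{p,q}(X)$ with $\mathcal{H}^{q,p}(X)$ using complex conjugation. Since $k$ is odd, $p\neq q$ for every pair with $p+q=k$, so the index set splits into disjoint two-element orbits $\{(p,q),(q,p)\}$ with no fixed points.

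The key step is to show that complex conjugation $\alpha\mapsto\bar\alpha$ maps $\mathcal{H}^{p,q}(X)$ onto $\mathcal{H}^{q,p}(X)$. There are two checks.

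\begin{enumerate}
\item Conjugation preserves harmonicity. $\Delta_d$ is a real operator: it is the complexification of the operator on real forms. Hence $\Delta_d\bar\alpha=\overline{\Delta_d\alpha}=0$, so conjugation preserves $\mathcal{H}^k(X)\otimes_{\mathbb{R}}\mathbb{C}$.
\item Conjugation swaps types. $\mathbf{J}$ is also real, so for $\alpha\in\Omega^{p,q}$ we have $\mathbf{J}\bar\alpha=\overline{\mathbf{J}\alpha}=\overline{i(p-q)\alpha}=i(q-p)\bar\alpha$. This is the eigenvalue characterization from the proof of Lemma \ref{poly_J}. Alternatively, conjugation swaps holomorphic and antiholomorphic vectors. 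Either way $\bar\alpha$ has type $(q,p)$.
\end{enumerate}

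One subtlety needs care here. For a fixed degree $k$, the $\mathbf{J}$-eigenvalue $i(p-q)$ determines $(p,q)$, since $p+q=k$ and the values $p-q$ are distinct. So the eigenspace argument is legitimate.

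Conjugation is an $\mathbb{R}$-linear involution and is conjugate-linear over $\mathbb{C}$. It therefore gives a bijection between $\mathcal{H}^{p,q}(X)$ and $\mathcal{H}^{q,p}(X)$ that sends a $\mathbb{C}$-basis to a $\mathbb{C}$-basis. Hence $\dim_{\mathbb{C}}\mathcal{H}^{p,q}(X)=\dim_{\mathbb{C}}\mathcal{H}^{q,p}(X)$, usually written $h^{p,q}=h^{q,p}$.

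Summing over the orbits gives
\[
b_{2l+1}(X)=2\sum_{p>q,\;p+q=2l+1}h^{p,q},
\]
which is even. Nonnegativity is automatic, since $b_{2l+1}(X)$ is a dimension.

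I expect no serious obstacle. The only point needing care is the precise bookkeeping between the real and complexified spaces: $b_k$ as a real dimension equals the complex dimension of the complexification, and conjugation is conjugate-linear yet still preserves dimension. One also has to confirm that the identification $\mathcal{H}^k\cong H^k_{DR}$ from Theorem \ref{Hodgethm2} is compatible with complexification, which is immediate.
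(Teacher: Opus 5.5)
Your proposal is correct and follows essentially the same route as the paper: combine Theorems \ref{Hodgethm1} and \ref{Hodgethm2}, use that $\Delta_d$ is real and $\overline{\Omega^{p,q}(X)}=\Omega^{q,p}(X)$ to get $\overline{\mathcal{H}^{p,q}(X)}=\mathcal{H}^{q,p}(X)$, and then pair the terms $(p,q)$ and $(q,p)$, which never coincide in odd degree. Your extra bookkeeping (the $\mathbf{J}$-eigenvalue check, and the fact that conjugate-linearity still preserves dimension) makes explicit steps that the paper leaves implicit.
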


\begin{proof}
Note that $\beta \in \Omega^{p,q}(X)$ iff $\overline{\beta} \in \Omega^{q,p}(X),$ i.e.\ $\overline{\Omega^{p,q}(X)}=\Omega^{q,p}(X).$ Since the $d$-Laplacian is a real operator, it is invariant under conjugation, so \[\overline{\mathcal{H}^{p,q}(X)}=\{\beta \in \Omega^{q,p}(X) \mid \Delta_d \beta=0 \}=\mathcal{H}^{q,p}(X).\]  It then follows from Theorems \ref{Hodgethm1} and \ref{Hodgethm2} that

\begin{equation*}
    \begin{split}
        b_{2l+1}(X)& \coloneqq \dim{H^{2l+1}_{DR} (X) \otimes_{\mathbb{R}}\!\mathbb{C}}\\
        &=\sum_{p+q=2l+1} \dim{\mathcal{H}^{p,q}(X)}\\
        &=\sum_{\substack{p+q=2l+1 \\ p<q}} \dim{\mathcal{H}^{p,q}(X)}+ \sum_{\substack{p+q=2l+1 \\ q<p}} \dim{\mathcal{H}^{p,q}(X)}\\
        &=2 \sum_{\substack{p+q=2l+1 \\ p<q}} \dim{\mathcal{H}^{p,q}(X)}\\
        &\in 2\mathbb{Z}_{\geq 0}.
    \end{split}
\end{equation*}
\end{proof}

\begin{remark}\label{evenBetti}
For any $k$, $b_{2k}(X)>0$. To see why this is the case, note that $\nabla \omega_J =0.$ A quick induction argument confirms that $\nabla \omega^k_J =0.$ Since $d$ and $d^*$ are defined in terms of $\nabla,$ it is clear that $d \omega^k_J =0$ and $d^* \omega^k_J =0.$ So $\Delta_d \omega^k_J =0.$ We know that $\omega^k_J$ must represent a De Rham cohomology class thanks to the Hodge isomorphism theorem. Indeed, since $g$ is nondegenerate and $\omega _J$ is pointwise the Kähler form of the complex structure, we have $\omega_J^k \neq 0$ for $k\leq n/2$. Thus, $H^{2k}_{DR}(X)$ is non-trivial.
\end{remark}

\subsection{Compactness theorems}\label{1.2}
In this subsection, we present Anderson's compactness theorem \cite{Anderson,Anderson_notes}, which gives an optimal statement for regularity. See Theorem \ref{Acpctness}. We also mention some compactness results that appeared earlier in the literature.

Let $k\geq 0$, $0<p\leq \infty$ be integers, and $0<\alpha < 1$ be a real number. The most relevant Banach spaces here are the Sobolev spaces $W^{k,p}$ of functions with weak derivatives up to order $k$ belonging in $L^p$, and the H{\"o}lder spaces $C^{k,\alpha}$ of $C^k$ functions with partial derivatives of order $k$ that are $\alpha$-H{\"o}lder continuous. Recall too that any Banach space induces a topology; i.e.\ the norm $\|\cdot\|$ defines the norm topology on $B$ through the metric $d(g,h) \coloneqq \|g - h\|.$

\begin{definition}\label{metric_regularity}
Let $X$ be a smooth manifold, $B$ be a Banach space of real-valued functions on $X,$ and $g$ be a metric on $X$ (i.e. a section of the vector bundle $T^{*}_X \otimes T^{*}_X$ that is symmetric and positive definite). We say that $g$ is a $B$ metric if its coefficient functions belong to $B.$ If $g$ is a $B$ metric, then we call $(X,g)$ a $B$ Riemannian manifold.
\end{definition}

For example, as per Definition \ref{metric_regularity}, a Riemannian metric is a $C^{\infty}$ metric, and a Riemannian manifold is a $C^{\infty}$ Riemannian manifold. $B$ tensor fields are defined similarly. For instance, $B$ almost-complex structures have coefficients in $B$.

\begin{definition}\label{metric_convergence}
Let $B$ be a Banach space of real valued functions defined on a smooth manifold $X$ of dimension $n.$ Let $(X,g)$ be a $B$ Riemannian manifold. A sequence $\{(X_i,g_i)\}$ of $n$-dimensional Riemannian manifolds converges (resp. converges weakly) in the $B$ topology to $(X,g)$ if for sufficiently large $i,$ there is a sequence of diffeomorphisms $\{f_i : X\to X_i\}$ such that $\{f^*_i g_i\}$ converges (resp. converges weakly) to $g$ in the $B$ topology on $X$.
\end{definition}

In Definition \ref{metric_convergence}, the precise meaning of the sequence $\{f^*_i g_i\}$ converging to $g$ in the $B$ topology is that there is a finite collection of charts $\{\phi_k\}$ covering $X$ such that the sequences of metric component functions relative to the $\phi_k,$ $\{(f^*_i g_i)_{ab}\},$ converge to the metric components $g_{ab}$ in the $B$ topology; i.e.\ w.r.t.\ the $\phi_k,$ for all $a,b,$ we have that $\|(f^*_i g_i)_{ab} - g_{ab}\|\to 0$ as $i\to \infty.$

Note that in our setting, sequential compactness implies precompactness.

Let $n \geq 1$ be an integer, $d,l,v>0$ be real numbers, and $\mathcal{M}(n,d,l,v)$ be the class of compact $n$-dimensional Riemannian manifolds with diameter, sectional curvature, and volume respectively bounded as $\diam \leq d,$ $|sec|\leq l^2$ and $\vol\geq v$. The Cheeger-Gromov compactness theorem is the statement that $\mathcal{M}(n,d,l,v)$ is precompact in the $C^{1,\alpha}$ topology (see Theorem 4.4 \cite{SPetersen}, and the original references cited therein). The proof is a sequential compactness argument that makes use of harmonic coordinates to obtain optimal regularity of the limit metric. For the definition of these coordinates, see Lemma 48 \cite{Petersen_book}. 

We end this subsection with the following generalization of Cheeger-Gromov compactness due to Anderson, which will be key for proving our main theorem.
For this we choose some $p\gg 0$ sufficiently big so that the appropriate Sobolev embedding and multiplication properties hold. 

Note that $W^{2,p}$ is reflexive so the weak and weak-(*) topologies are the same and any bounded sequence has a subsequence converging weakly in $W^{2,p}$. We use this to rephrase the statement of Anderson's theorem in a way that will suit our needs. 

\begin{definition}\label{MI_def}
The class of compact $n$-dimensional Riemannian manifolds satisfying $\diam \leq d,$ $\|Ric\|\leq k$ and $i\geq i_0,$ where $Ric$ is the Ricci curvature and $i$ is the injectivity radius is denoted by $\mathcal{M}(n,d,k,i_0).$ 
\end{definition}

\begin{theorem}{(Theorem 2.2 \cite{Anderson_notes})}\label{Acpctness}
Any sequence in $\mathcal{M}(n,d,k,i_0)$ has a subsequence converging in the weak $W^{2,p}$ topology to a $W^{2,p}$ Riemannian manifold. The convergence is strong in $C^{1,\alpha}$ for $0<\alpha < 1$ depending on $p$.
\end{theorem}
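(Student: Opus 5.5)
Since Theorem \ref{Acpctness} is Anderson's result, I would reproduce his argument in outline. The strategy is the Cheeger--Gromov scheme of Subsection \ref{1.2}, with harmonic coordinates doing the work that sectional curvature bounds did before. The plan has three steps: bound the harmonic radius from below uniformly on $\mathcal{M}(n,d,k,i_0)$, use the Ricci equation in harmonic coordinates to get uniform $W^{2,p}$ bounds, then glue the charts and extract a limit.

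\emph{Step 1 (elliptic estimate).} Fix $p>n$. In harmonic coordinates $x^a$ (so $\Delta_g x^a=0$), the Ricci tensor takes the form
\[
Ric_{ij}=-\tfrac12 g^{ab}\partial_a\partial_b g_{ij}+Q_{ij}(g,\partial g),
\]
where $Q$ is quadratic in $\partial g$. Suppose that on a harmonic ball of radius $r$ we know $\tfrac12\delta\le g\le 2\delta$ and $\|g\|_{W^{1,p}}\le C$. Then $L^p$ elliptic theory for the operator $g^{ab}\partial_a\partial_b$, which has $C^{\alpha}$ coefficients with $\alpha=1-n/p$, gives $\|g_{ij}\|_{W^{2,p}}\le C(n,p,k,r)$ on a smaller ball. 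Here we use $\|Ric\|\le k$ and the fact that $W^{1,p}\subset C^{\alpha}$. The same estimate shows that transition maps between overlapping harmonic charts are bounded in $W^{3,p}$.

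\emph{Step 2 (harmonic radius lower bound; the main obstacle).} Define the $W^{1,p}$ harmonic radius $r_h(x)$ as the largest $r$ for which such a chart exists around $x$ with the above control. I would show $r_h\ge r_0(n,k,i_0)>0$ by contradiction. Suppose $r_h(x_j)\to 0$ along a sequence. Rescale so that $r_h(x_j)=1$ and $x_j$ nearly minimizes $r_h$ locally. The rescaled metrics satisfy $\|Ric\|\to 0$ and $\mathrm{inj}\to\infty$, and by Step 1 they have uniform $W^{2,p}$ bounds on unit harmonic balls. By Rellich, a subsequence converges in $C^{1,\alpha'}$ to a complete pointed limit. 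This limit is weakly Ricci-flat, and hence smooth by harmonic-coordinate regularity. Its injectivity radius is infinite, so every geodesic is a line. Cheeger--Gromoll splitting in every direction then forces the limit to be flat $\mathbb{R}^n$, whose harmonic radius is infinite. The delicate point is proving that $r_h$ is continuous under $C^{1,\alpha'}$ (equivalently weak $W^{2,p}$) convergence. One must transplant harmonic coordinates of the limit back to the approximating metrics by solving Dirichlet problems, and control the error in $W^{2,p}$. Granting this, we get $r_h(x_j)\to\infty$, contradicting the normalization.

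\emph{Step 3 (gluing and extraction).} The diameter bound $d$, the lower volume bound coming from $i_0$ (Croke), and Bishop--Gromov together bound the number of harmonic balls of radius $r_0/10$ needed to cover each manifold. Passing to a subsequence, the overlap combinatorics stabilize. Since $W^{2,p}$ is reflexive, the chart metrics converge weakly in $W^{2,p}$, and by Rellich they converge strongly in $C^{1,\alpha}$. The transition maps converge in $C^{2,\alpha}$, so the limit charts glue to a manifold $X$ with a $W^{2,p}$ metric $g$. Center-of-mass averaging of the chart maps then produces diffeomorphisms $f_i\colon X\to X_i$ such that $f_i^*g_i\to g$ weakly in $W^{2,p}$ and strongly in $C^{1,\alpha}$, in the sense of Definition \ref{metric_convergence}.
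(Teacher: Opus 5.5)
Your outline is the argument of \cite{Anderson_notes}, which the paper cites without proof. The paper's only indication of that proof is that one bounds the $W^{2,p}$ harmonic radius from below, then uses harmonic coordinates and partitions of unity to embed the manifolds in $\mathbb{R}^N$ and pass to the limit there.

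\textbf{Steps 1 and 2.} These match that sketch, with two small points to fix.
\begin{enumerate}
\item Your $W^{1,p}$ harmonic radius must be normalized scale-invariantly, e.g.\ by $r^{1-n/p}\|\partial g\|_{L^p(B_r)}\le C$. Otherwise the rescaling in Step 2 makes no sense.
\item In Step 1, $Q(g,\partial g)$ is only in $L^{p/2}$. So the $W^{2,p}$ bound comes from a short bootstrap using $p>n$, not from one application of $L^p$ theory.
\end{enumerate}
With these, your radius is comparable to the $W^{2,p}$ one under $\|Ric\|\le k$.

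\textbf{Step 3.} The genuine difference is the gluing: you use center of mass, the paper uses the Euclidean embedding.

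The embedding involves only the harmonic charts and their uniformly $W^{3,p}$-bounded transition maps. It also exhibits the limit as a $W^{3,p}$ submanifold of $\mathbb{R}^N$. The paper relies on exactly this right after the statement, and again in the proof of Theorem \ref{prepctness_thm}: the combined image in $\mathbb{R}^{2N}$ identifies the limits obtained for different exponents $p$. Your construction does not directly provide this.

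Center of mass gives the maps $f_i$ more directly, but you must specify where and with which metric you average. A Riemannian center of mass in $(X_i,g_i)$ involves derivatives of $\exp_{g_i}$. Through Jacobi fields, these are governed by the full curvature tensor of $g_i$, which a Ricci bound does not control uniformly. So that construction gives no uniform $W^{3,p}$ bound on $f_i$, and hence no $W^{2,p}$ bound on $f_i^*g_i$.

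The fix is to average in the limit $X$ with respect to a fixed smooth metric there. This produces maps $X_i\to X$ whose inverses serve as your $f_i$. Only the chart maps enter, and Step 3 then goes through.
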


For a small point to be used later, we comment on the unicity of the limit. 
The discussion of \cite{Anderson_notes} entails getting a lower bound for the $W^{2,p}$ harmonic radius, then using harmonic coordinates and partitions of unity to embed the manifolds into $\mathbb{R}^N$ in order to take the limit. The image of the limiting manifold has regularity $W^{3,p}$ as a subset of $\mathbb{R}^N$.  
Given two such limiting manifolds $(X_{\infty},g_{\infty})$ and 
$(X'_{\infty},g'_{\infty})$, the combined limiting image in $\mathbb{R}^{2N}$ provides the graph of a $W^{3,p}$ isomorphism $X_{\infty}\cong X'_{\infty}$ sending $g_{\infty}$ to $g'_{\infty}$. 

\section{Bounded almost-complex-K{\"a}hler manifolds}\label{2}

For the moment we are working with smooth manifolds, although it could be useful to make the definitions that follow in the weaker setting too. It is left to the reader to introduce notation for that. 

\begin{definition}\label{def_norm}
Let $(X,g)$ be an $n$-dimensional Riemannian manifold and $(e_i)_{i=1}^n$ be a local orthonormal frame of $T_X$. Let $\alpha \in \Omega^k(X)$ and $\beta \in \Omega^k(X,T_X)$. The norms of $\alpha$ and $\beta$ are defined by the formulas

\[\|\alpha\|^2_g=\frac{1}{k!} \sum_{1\leq i_1,\dots,i_k\leq n} |\alpha(e_{i_1},\dots,e_{i_k})|^2\] and \[\|\beta\|^2_g=\frac{1}{k!} \sum_{1\leq i_1,\dots,i_k\leq n} \|\beta(e_{i_1},\dots,e_{i_k})\|^2_g\] where $|\alpha(e_{i_1},\dots,e_{i_k})|$ is the absolute value of the function $\alpha(e_{i_1},\dots,e_{i_k})$ and $\|\beta(e_{i_1},\dots,e_{i_k})\|^2_g=g\big(\beta(e_{i_1},\dots,e_{i_k}),\beta(e_{i_1},\dots,e_{i_k})\big)$.
\end{definition}

\begin{lemma}\label{norm_bound}
Let $\alpha \in \Omega^k(X)$ and $\beta \in \Omega^k(X,T_X)$. For any vector fields $\zeta_1,\dots,\zeta_k \in \mathfrak{X}(X)$, we have that \[|\alpha(\zeta_1,\dots,\zeta_k)|^2 \leq k! \|\alpha\|^2_g \|\zeta_1\|^2_g \dots \|\zeta_k\|^2_g\] and similarly \[\|\beta(\zeta_1,\dots,\zeta_k)\|^2_g \leq k! \|\beta\|^2_g \|\zeta_1\|^2_g \dots \|\zeta_k\|^2_g.\]

In particular, \[|\alpha(\zeta_1,\dots,\zeta_k)| \leq \sqrt{k!} \|\alpha\|_g \|\zeta_1\|_g \dots \|\zeta_k\|_g\] and \[\|\beta(\zeta_1,\dots,\zeta_k)\|_g \leq \sqrt{k!} \|\beta\|_g \|\zeta_1\|_g \dots \|\zeta_k\|_g.\]
\end{lemma}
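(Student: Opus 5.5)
The plan is to reduce to a pointwise multilinear algebra estimate at a fixed point $x\in X$, using the orthonormal frame $(e_i)_{i=1}^n$ of $T_xX$. I will first expand each vector field in this frame, $\zeta_j=\sum_{i} c^j_{i} e_i$, where $c^j_i=g(\zeta_j,e_i)$. Orthonormality gives $\sum_i |c^j_i|^2=\|\zeta_j\|_g^2$. By multilinearity,
\[
\alpha(\zeta_1,\dots,\zeta_k)=\sum_{1\leq i_1,\dots,i_k\leq n} c^1_{i_1}\cdots c^k_{i_k}\,\alpha(e_{i_1},\dots,e_{i_k}).
\]

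Next I apply the Cauchy--Schwarz inequality in $\mathbb{R}^{n^k}$ to this sum, viewed as the inner product of the vectors $(c^1_{i_1}\cdots c^k_{i_k})$ and $(\alpha(e_{i_1},\dots,e_{i_k}))$ indexed by multi-indices $(i_1,\dots,i_k)$. This gives
\[
|\alpha(\zeta_1,\dots,\zeta_k)|^2\leq \Big(\sum_{i_1,\dots,i_k}|c^1_{i_1}|^2\cdots|c^k_{i_k}|^2\Big)\Big(\sum_{i_1,\dots,i_k}|\alpha(e_{i_1},\dots,e_{i_k})|^2\Big).
\]
The first factor factorizes as $\prod_j\sum_i|c^j_i|^2=\prod_j\|\zeta_j\|_g^2$. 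By Definition \ref{def_norm}, the second factor is exactly $k!\,\|\alpha\|_g^2$. This is the first inequality.

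For the $T_X$-valued form $\beta$ I will do the same expansion, but with vector-valued coefficients $\beta(e_{i_1},\dots,e_{i_k})\in T_xX$. I then use the triangle inequality together with Cauchy--Schwarz, or equivalently Cauchy--Schwarz in the Hilbert space $\mathbb{R}^{n^k}\otimes T_xX$, to get
\[
\Big\|\sum c_I\,\beta(e_I)\Big\|_g^2\leq \Big(\sum |c_I|^2\Big)\Big(\sum \|\beta(e_I)\|_g^2\Big).
\]
Alternatively, I can reduce to the scalar case by pairing with the unit vector in the direction of $\beta(\zeta_1,\dots,\zeta_k)$. The factorization of the coefficient sum is the same as before, and the $\beta$-sum is $k!\,\|\beta\|_g^2$.

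The "in particular" statements follow by taking square roots, since all quantities are nonnegative. The estimates are pointwise, so they hold as functions on $X$. There is no real obstacle here. The only points that need care are that the normalizing factor $1/k!$ in Definition \ref{def_norm} is exactly what produces the constant $k!$, and that the frame is orthonormal at each point, so that the coefficient norms equal $\|\zeta_j\|_g$.
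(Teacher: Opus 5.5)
Your proposal is correct and follows essentially the same route as the paper. Both arguments expand the $\zeta_j$ in the orthonormal frame, apply Cauchy--Schwarz over multi-indices, factor $\sum_I |c_I|^2$ as $\prod_j \|\zeta_j\|_g^2$, and identify $\sum_I|\alpha(e_I)|^2 = k!\,\|\alpha\|_g^2$. For $\beta$, the paper expands $\|\sum_I c_I\beta(e_I)\|_g^2$ as a double sum, bounds $|g(\beta(e_I),\beta(e_{I'}))|\le\|\beta(e_I)\|_g\|\beta(e_{I'})\|_g$ to reach $(\sum_I|c_I|\,\|\beta(e_I)\|_g)^2$, and then applies Cauchy--Schwarz again; this is exactly your triangle-inequality-plus-Cauchy--Schwarz step, and your alternative reduction to the scalar case via a unit vector also works.
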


\begin{proof}
For the first claim, observe that relative to a local orthonormal frame, we can write, for any $1\leq j \leq k$, $\zeta_j=\sum_{1\leq i_j \leq n} \zeta^{i_j}_j e_{i_j}$ so that \[\alpha(\zeta_1,\dots,\zeta_k)=\sum_{1\leq i_1, \dots,i_k \leq n} \zeta^{i_1}_1 \dots \zeta^{i_k}_k \alpha(e_{i_1},\dots,e_{i_k}). \] For any $k$-tuple $I=(i_1, \dots,i_k)$ of natural numbers, define $\zeta^I = \zeta^{i_1}_1 \dots \zeta^{i_k}_k$ and $\alpha^I = \alpha(e_{i_1},\dots,e_{i_k})$. Then, we can apply the Cauchy-Schwarz inequality for the square of the absolute value of a sum of products of real-valued functions in the following way :

\begin{equation*}
    \begin{split}
       |\alpha(\zeta_1,\dots,\zeta_k)|^2&= \big|\sum_{1\leq i_1, \dots,i_k \leq n} \zeta^{i_1}_1 \dots \zeta^{i_k}_k \alpha(e_{i_1},\dots,e_{i_k})\big|^2\\
       &=\big|\sum_{I\in \mathbb{N}^k_{\leq n}} \zeta^I \alpha^I\big|^2\\
       &\leq \big(\sum_{I\in \mathbb{N}^k_{\leq n}} (\zeta^I)^2 \big) \big(\sum_{I\in \mathbb{N}^k_{\leq n}} (\alpha^I)^2 \big)\\
       &=\big(\sum_{1\leq i_1, \dots,i_k \leq n} |\zeta^{i_1}_1 \dots \zeta^{i_k}_k|^2 \big) \big(\sum_{1\leq i_1, \dots,i_k \leq n} |\alpha(e_{i_1},\dots,e_{i_k})|^2 \big)\\
       &=\big(\sum_{1\leq i_1\leq n} |\zeta^{i_1}_1|^2 \dots \sum_{1\leq i_k\leq n} |\zeta^{i_k}_k|^2\big)\big(\sum_{1\leq i_1, \dots,i_k \leq n} |\alpha(e_{i_1},\dots,e_{i_k})|^2 \big)\\
       &=k! \|\alpha\|^2_g \|\zeta_1\|^2_g \dots \|\zeta_k\|^2_g.
    \end{split}
\end{equation*}

We proceed in a similar fashion to prove the second inequality. Set $\beta^I = \beta(e_{i_1},\dots,e_{i_k})$. Then,

\begin{equation*}
    \begin{split}
       \|\beta(\zeta_1,\dots,\zeta_k)\|^2_g&= \big\|\sum_{1\leq i_1, \dots,i_k \leq n} \zeta^{i_1}_1 \dots \zeta^{i_k}_k \beta(e_{i_1},\dots,e_{i_k})\big\|^2_g\\
       &=\big\|\sum_{I\in \mathbb{N}^k_{\leq n}} \zeta^I \beta^I\big\|^2\\
       &=\sum_{I,I' \in \mathbb{N}^k_{\leq n}} \zeta^I \zeta^{I'} g(\beta^I,\beta^{I'}) \\
       &\leq \sum_{I,I' \in \mathbb{N}^k_{\leq n}} |\zeta^I| |\zeta^{I'}| |g(\beta^I,\beta^{I'})|.
       \end{split}
\end{equation*}

Now, the Cauchy-Schwarz inequality for the absolute value of the Riemannian metric inner product of $2$ vector fields give us that \[|g(\beta^I,\beta^{I'})|\leq \|\beta^I\|_g \|\beta^{I'}\|_g\] so

\begin{equation*}
    \begin{split}
       \|\beta(\zeta_1,\dots,\zeta_k)\|^2_g& \leq \sum_{I,I' \in \mathbb{N}^k_{\leq n}} |\zeta^I| |\zeta^{I'}| |g(\beta^I,\beta^{I'})|\\
       &\leq \sum_{I,I' \in \mathbb{N}^k_{\leq n}} |\zeta^I| |\zeta^{I'}| \|\beta^I\|_g \|\beta^{I'}\|_g \\
       &\leq \big|\sum_{I \in \mathbb{N}^k_{\leq n}} |\zeta^I| \|\beta^I\|_g \big|^2\\
       &\leq \big(\sum_{I \in \mathbb{N}^k_{\leq n}} |\zeta^I|^2 \big) \big(\sum_{I \in \mathbb{N}^k_{\leq n}} \|\beta^I\|^2_g \big)\\
       &=k! \|\beta\|^2_g \|\zeta_1\|^2_g \dots \|\zeta_k\|^2_g,
    \end{split}
\end{equation*}
where note we used the Cauchy-Schwarz inequality again to go from the $3^{rd}$ to the $4^{th}$ line.
\end{proof}

\begin{definition}\label{Nij_def}
The Nijenhuis tensor of an almost-complex structure $J$ on a smooth manifold $X$ is given by the formula
\[N_J(\zeta,\eta)=[J\zeta,J\eta]-J\big([J\zeta,\eta]+[\zeta,J\eta]\big)-[\zeta,\eta]\] for any $\zeta,\eta \in \mathfrak{X}(X)$.
\end{definition}

\begin{lemma}{(Proposition 4.2 \cite{KN})}\label{KN_formula}
Let $(X,g,J)$ be an almost-hermitian manifold with fundamental $2$-form $\omega_J \coloneqq g(J\cdot,\cdot)$, and let $\nabla$ be the Levi-Civita connection of $g$. For any $\zeta,\eta,\rho \in \mathfrak{X}(X)$, we have that 

\[2g\big((\nabla_{\zeta} J)\eta, \rho\big)=d\omega_J(\zeta,\eta,\rho)-d\omega_J(\zeta,J\eta,J\rho)+g\big(N_J(\eta,\rho), J\zeta\big)\]
\end{lemma}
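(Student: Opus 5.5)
The plan is a direct computation: express $d\omega_J$ and $N_J$ through $\nabla$ and $\nabla J$, then substitute into the right-hand side and check that everything collapses to $2g((\nabla_\zeta J)\eta,\rho)$. Since both sides are tensorial in $\zeta,\eta,\rho$, the identity may be checked pointwise.

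\textbf{Identities for $\nabla J$.} Differentiate $J^2=-\operatorname{Id}$ to get $(\nabla_\zeta J)J=-J(\nabla_\zeta J)$. Since $J$ is $g$-orthogonal and $\nabla g=0$, the endomorphism $\nabla_\zeta J$ is $g$-skew. Combining these, $g((\nabla_\zeta J)J\eta,J\rho)=-g((\nabla_\zeta J)\eta,\rho)$.

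\textbf{Expressing $d\omega_J$.} Because $\nabla$ is torsion free, $d\omega_J(\zeta,\eta,\rho)$ is the cyclic sum of $(\nabla_\zeta\omega_J)(\eta,\rho)$, by the formula for $d$ recalled in Section \ref{1.1}. We have $(\nabla_\zeta\omega_J)(\eta,\rho)=g((\nabla_\zeta J)\eta,\rho)$. Write $A(\zeta,\eta,\rho)\coloneqq g((\nabla_\zeta J)\eta,\rho)$, which is skew in $(\eta,\rho)$. Then
\[
d\omega_J(\zeta,\eta,\rho)=A(\zeta,\eta,\rho)+A(\eta,\rho,\zeta)+A(\rho,\zeta,\eta).
\]

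\textbf{Expressing $N_J$.} Replace every bracket by $[\zeta,\eta]=\nabla_\zeta\eta-\nabla_\eta\zeta$. All the terms involving $J\nabla$ cancel, leaving
\[
N_J(\eta,\rho)=(\nabla_{J\eta}J)\rho-(\nabla_{J\rho}J)\eta+J(\nabla_\rho J)\eta-J(\nabla_\eta J)\rho .
\]
Pairing with $J\zeta$ and using that $J$ is orthogonal gives
\[
g(N_J(\eta,\rho),J\zeta)=A(J\eta,\rho,J\zeta)-A(J\rho,\eta,J\zeta)+A(\rho,\eta,\zeta)-A(\eta,\rho,\zeta).
\]

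\textbf{Substitution and collapse.} Expand $d\omega_J(\zeta,J\eta,J\rho)=A(\zeta,J\eta,J\rho)+A(J\eta,J\rho,\zeta)+A(J\rho,\zeta,J\eta)$. The first term equals $-A(\zeta,\eta,\rho)$ by the identity from the first step. For the other two, the anticommutation relation together with skewness lets us move a $J$ from one slot to another, for example $A(J\eta,J\rho,\zeta)=-A(J\eta,\rho,J\zeta)$.

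Now sum the three pieces $d\omega_J(\zeta,\eta,\rho)-d\omega_J(\zeta,J\eta,J\rho)+g(N_J(\eta,\rho),J\zeta)$. The terms should pair off as follows.
\begin{itemize}
\item $A(\zeta,\eta,\rho)$ from the first piece and $+A(\zeta,\eta,\rho)$ from the second combine to give $2A(\zeta,\eta,\rho)$.
\item $A(\eta,\rho,\zeta)$ and $A(\rho,\zeta,\eta)$ cancel against $-A(\eta,\rho,\zeta)+A(\rho,\eta,\zeta)$ from the Nijenhuis term.
\item The $J$-twisted terms from the second piece cancel against $A(J\eta,\rho,J\zeta)-A(J\rho,\eta,J\zeta)$.
\end{itemize}
This yields $2A(\zeta,\eta,\rho)$, as required.

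The main obstacle is purely bookkeeping: signs and normalization. One must fix the sign convention $\omega_J=g(J\cdot,\cdot)$ and the normalization of $d$, with no $1/3$ factor, consistently with the formula in Section \ref{1.1}. The step where $J$ is moved between slots in the twisted terms is where sign errors are most likely. I would first verify the identity on the flat model $\mathbb{R}^{2m}$ with a linear perturbation of $J$ to confirm the signs before doing the general check.
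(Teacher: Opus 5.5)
Your strategy is sound and gives a self-contained proof. The paper offers no argument of its own; it only cites Kobayashi--Nomizu, where the identity appears with different normalizations of $d$, $N_J$ and the fundamental form. Verifying it directly in the paper's conventions, as you do, is therefore the right thing. Your formula for $d\omega_J$ as the cyclic sum of $A$ is correct. So are your expansion of $N_J(\eta,\rho)$ in terms of $\nabla J$, the resulting expression for $g(N_J(\eta,\rho),J\zeta)$, and the three cancellations you list at the end.

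There is, however, a sign error in the one slot-moving identity you write down. From $(\nabla_\xi J)J=-J(\nabla_\xi J)$ and the skewness of $J$ one gets
\[
A(\xi,J\eta,\rho)=-g\big(J(\nabla_\xi J)\eta,\rho\big)=g\big((\nabla_\xi J)\eta,J\rho\big)=A(\xi,\eta,J\rho),
\]
with a plus sign. Equivalently, $\nabla_\xi\omega_J$ is a $J$-anti-invariant $2$-form, and for such forms $J$ passes from one slot to the other with no sign change. Hence $A(J\eta,J\rho,\zeta)=+A(J\eta,\rho,J\zeta)$, not $-A(J\eta,\rho,J\zeta)$. Likewise $A(J\rho,\zeta,J\eta)=A(J\rho,J\zeta,\eta)=-A(J\rho,\eta,J\zeta)$, so
\[
d\omega_J(\zeta,J\eta,J\rho)=-A(\zeta,\eta,\rho)+A(J\eta,\rho,J\zeta)-A(J\rho,\eta,J\zeta).
\]
With this sign, $-d\omega_J(\zeta,J\eta,J\rho)$ contributes $-A(J\eta,\rho,J\zeta)+A(J\rho,\eta,J\zeta)$. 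This exactly cancels the twisted part of the Nijenhuis term, and the total is $2A(\zeta,\eta,\rho)$.

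With the sign you wrote, the twisted terms would instead add up to $2\big(A(J\eta,\rho,J\zeta)-A(J\rho,\eta,J\zeta)\big)$, and the identity would fail. So your third bullet is only justified once the rule is corrected. This is a local bookkeeping slip, at exactly the spot you flagged as risky, not a structural gap. With the sign fixed, your proof is complete.
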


\begin{lemma}\label{cov_kaehler}
Let $(X,g,J)$ be an almost-hermitian manifold with fundamental $2$-form $\omega_J$, and let $\nabla$ be the Levi-Civita connection of $g$. Then, \[(\nabla_{\zeta} \omega_J)(\eta,\rho)=g\big((\nabla_{\zeta} J)\eta,\rho\big)\]
\end{lemma}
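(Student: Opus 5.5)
The plan is to compute both sides directly from the definition of the covariant derivative of a $2$-form and of an endomorphism, using only that $\nabla$ is metric ($\nabla g=0$). No torsion-freeness or integrability enters, so the identity holds for any almost-hermitian $(X,g,J)$.

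First, by the formula for $\nabla$ acting on differential forms recalled in Section \ref{1.1}, we have
\[
(\nabla_{\zeta}\omega_J)(\eta,\rho)=\zeta\big(\omega_J(\eta,\rho)\big)-\omega_J(\nabla_{\zeta}\eta,\rho)-\omega_J(\eta,\nabla_{\zeta}\rho).
\]
Substituting $\omega_J(\eta,\rho)=g(J\eta,\rho)$ turns each term into an expression in $g$ and $J$:
\[
\zeta\big(g(J\eta,\rho)\big)-g(J\nabla_{\zeta}\eta,\rho)-g(J\eta,\nabla_{\zeta}\rho).
\]

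Second, since $\nabla$ is compatible with $g$, the first term expands as
\[
\zeta\big(g(J\eta,\rho)\big)=g(\nabla_{\zeta}(J\eta),\rho)+g(J\eta,\nabla_{\zeta}\rho).
\]
The $g(J\eta,\nabla_\zeta\rho)$ terms cancel, and what remains is
\[
g\big(\nabla_{\zeta}(J\eta)-J\nabla_{\zeta}\eta,\rho\big).
\]

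Third, by the Leibniz rule for the induced connection on $\operatorname{End}(T_X)$, we have $(\nabla_\zeta J)\eta=\nabla_\zeta(J\eta)-J\nabla_\zeta\eta$. This gives exactly $g\big((\nabla_{\zeta} J)\eta,\rho\big)$, as claimed.

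There is no real obstacle here. The only point needing care is the sign and ordering convention $\omega_J=g(J\cdot,\cdot)$ as fixed in Lemma \ref{KN_formula}, which we keep consistently throughout. The computation is tensorial, so it suffices to carry it out pointwise, or for arbitrary vector fields as above.
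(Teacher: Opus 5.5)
Your proposal is correct and is essentially the paper's own proof: expand $(\nabla_\zeta\omega_J)(\eta,\rho)$ via the Leibniz rule, use metric compatibility of $\nabla$ on $\zeta\big(g(J\eta,\rho)\big)$, cancel the $g(J\eta,\nabla_\zeta\rho)$ terms, and recognize $\nabla_\zeta(J\eta)-J\nabla_\zeta\eta=(\nabla_\zeta J)\eta$. Your observation that only $\nabla g=0$ is used, not torsion-freeness, is accurate.
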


\begin{proof}
Since \[\nabla_{\zeta} g(J\eta,\rho)=g(\nabla_{\zeta}J\eta,\rho)+g(J\eta,\nabla_{\zeta} \rho),\] we have that 

\begin{equation*}
    \begin{split}
        (\nabla_{\zeta} \omega_J)(\eta,\rho)&= \nabla_{\zeta} g(J\eta,\rho)-g(J\nabla_{\zeta} \eta,\rho)-g(J\eta,\nabla_{\zeta} \rho)\\
        &=g(\nabla_{\zeta}J\eta,\rho)+g(J\eta,\nabla_{\zeta} \rho)-g(J\nabla_{\zeta} \eta,\rho)-g(J\eta,\nabla_{\zeta} \rho)\\
        &=g\big((\nabla_{\zeta} J)\eta,\rho\big).
    \end{split}
\end{equation*}
\end{proof}

\begin{proposition}\label{KT_lemma}
Let $(X,g,J)$ be an almost-hermitian manifold with fundamental $2$-form $\omega_J$, and let $\nabla$ be the Levi-Civita connection of $g$. Then, $\|d\omega_J\|_g \leq \epsilon$ and $\|N_J\|_g \leq \epsilon'$ for some $\epsilon,\epsilon' >0$ if and only if $\|\nabla J\|_g \leq \epsilon''$ for some $\epsilon'' >0$. 
\end{proposition}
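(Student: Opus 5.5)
The proposition has two directions, and I would prove each with an explicit constant depending only on $n$, using the pointwise inequalities of Lemma \ref{norm_bound}.

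\emph{Direction $\Leftarrow$.} Suppose $\|\nabla J\|_g\leq \epsilon''$.

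For $d\omega_J$, I would start from the formula $d\alpha(\zeta_0,\dots,\zeta_k)=\sum_j (-1)^j(\nabla_{\zeta_j}\alpha)(\zeta_0,\dots,\hat\zeta_j,\dots,\zeta_k)$ with $\alpha=\omega_J$. Lemma \ref{cov_kaehler} converts each term into $g((\nabla_{\zeta_j}J)\zeta_{j'},\zeta_{j''})$. Evaluating on an orthonormal frame, Cauchy--Schwarz together with Lemma \ref{norm_bound} (applied to $\nabla J$, viewed as a $T_X$-valued tensor) bounds each term by $C\|\nabla J\|_g$. Summing the squares over the frame gives $\|d\omega_J\|_g\leq C_1(n)\epsilon''$.

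For $N_J$, I would use that $\nabla$ is torsion free, so $[\zeta,\eta]=\nabla_\zeta\eta-\nabla_\eta\zeta$. Substituting into Definition \ref{Nij_def} and expanding, every term with a second-order-looking pattern cancels, because $J^2=-\operatorname{Id}$. What remains is the standard identity
\[
N_J(\zeta,\eta)=(\nabla_{J\zeta}J)\eta-(\nabla_{J\eta}J)\zeta-J(\nabla_\zeta J)\eta+J(\nabla_\eta J)\zeta .
\]
Since $J$ is an isometry and $\|J\zeta\|_g=\|\zeta\|_g$, Lemma \ref{norm_bound} gives $\|N_J\|_g\leq C_2(n)\epsilon''$.

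\emph{Direction $\Rightarrow$.} Suppose $\|d\omega_J\|_g\leq\epsilon$ and $\|N_J\|_g\leq\epsilon'$. I would use Lemma \ref{KN_formula}. With $\rho$ ranging over a frame, it expresses $2g((\nabla_\zeta J)\eta,\rho)$ through two values of $d\omega_J$ and one value of $N_J$, all evaluated on frame vectors or their images under $J$, which also have unit length. Lemma \ref{norm_bound} then bounds each term pointwise by $\sqrt{3!}\,\epsilon$ or $\sqrt{2}\,\epsilon'$. Summing the squares over the frame indices $\zeta=e_a$, $\eta=e_b$, $\rho=e_c$ gives $\|\nabla J\|_g\leq C_3(n)(\epsilon+\epsilon')$, and I would set $\epsilon''$ equal to this quantity.

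Read literally, the statement is nearly tautological on a compact manifold, since all the norms are bounded there. So the real content to establish is these quantitative linear bounds, with constants depending only on $n$.

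\emph{Main obstacle.} The key step is the derivation of the expression for $N_J$ in terms of $\nabla J$, together with keeping track of the normalization convention for $\|\nabla J\|_g$. The reverse direction is immediate from the cited Kobayashi--Nomizu formula.
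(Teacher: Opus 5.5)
Your proposal is correct and follows the paper's proof. The forward direction goes through Lemma \ref{KN_formula}; the paper takes $\rho=(\nabla_\zeta J)\eta$ and divides, which saves a factor $\sqrt{n}$ over your frame choice $\rho=e_c$. The converse uses the coordinate-free formula for $d$ with Lemma \ref{cov_kaehler} and an expression of $N_J$ through $\nabla J$. Your torsion-free identity for $N_J$ is exactly the formula $N_J(\zeta,\eta)=J\bigl(d^{\nabla}J(J\zeta,J\eta)-d^{\nabla}J(\zeta,\eta)\bigr)$ that the paper cites from \cite{ACOTI}, rewritten using $(\nabla J)J=-J(\nabla J)$.
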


\begin{proof}
We prove the forward direction first. Assume that $\|d\omega_J\|_g \leq \epsilon$ and $\|N_J\|_g \leq \epsilon'$ for some $\epsilon,\epsilon' >0$. Let $\zeta,\eta \in \mathfrak{X}(X)$. From Lemma \ref{KN_formula} with $\rho=(\nabla_{\zeta} J)\eta$, together with the triangle and Cauchy-Schwarz inequalities, it follows that

\begin{equation*}
    \begin{split}
        \|(\nabla_{\zeta} J)\eta\|^2_g &=\frac{1}{2}\Big(d\omega_J(\zeta,\eta,(\nabla_{\zeta} J)\eta)-d\omega_J(\zeta,J\eta,J(\nabla_{\zeta} J)\eta)+g\big(N_J(\eta,(\nabla_{\zeta} J)\eta), J\zeta\big) \Big)\\
        &=\frac{1}{2}\Big|d\omega_J(\zeta,\eta,(\nabla_{\zeta} J)\eta)-d\omega_J(\zeta,J\eta,J(\nabla_{\zeta} J)\eta)+g\big(N_J(\eta,(\nabla_{\zeta} J)\eta), J\zeta\big) \Big|\\
        &\leq \frac{1}{2}\Big|d\omega_J(\zeta,\eta,(\nabla_{\zeta} J)\eta)\Big|+\frac{1}{2}\Big|d\omega_J(\zeta,J\eta,J(\nabla_{\zeta} J)\eta)\Big|+\\
        &\frac{1}{2}\Big|g\big(N_J(\eta,(\nabla_{\zeta} J)\eta), J\zeta\big) \Big|\\
        &\leq \frac{1}{2}\Big|d\omega_J(\zeta,\eta,(\nabla_{\zeta} J)\eta)\Big|+\frac{1}{2}\Big|d\omega_J(\zeta,J\eta,J(\nabla_{\zeta} J)\eta)\Big|+\\
        &\frac{1}{2}\|N_J(\eta,(\nabla_{\zeta} J)\eta)\|_g \|J\zeta\|_g 
    \end{split}
\end{equation*}

By Lemma \ref{norm_bound}, the hermitianity of $g$ and the property $(\nabla_{\zeta} J)J\eta=-J(\nabla_{\zeta} J)\eta$, we thus have that 

\begin{equation*}
    \begin{split}
        \|(\nabla_{\zeta} J)\eta\|^2_g &\leq \frac{1}{2}\Big|d\omega_J(\zeta,\eta,(\nabla_{\zeta} J)\eta)\Big|+\frac{1}{2}\Big|d\omega_J(\zeta,J\eta,(\nabla_{\zeta} J)J\eta)\Big|+\\
        &\frac{1}{2}\|N_J(\eta,(\nabla_{\zeta} J)\eta)\|_g \|J\zeta\|_g\\
        &\leq \frac{\sqrt{3!}}{2}\epsilon\|\zeta\|_g \|\eta\|_g \|(\nabla_{\zeta} J)\eta\|_g +\frac{\sqrt{3!}}{2} \epsilon \|\zeta\|_g \|J\eta\|_g \|(\nabla_{\zeta} J)J\eta\|_g +\\
        &\frac{\sqrt{2!}}{2}\epsilon' \|\eta\|_g \|(\nabla_{\zeta} J)\eta\|_g \|J\zeta\|_g\\
        &=\sqrt{3!} \epsilon\|\zeta\|_g \|\eta\|_g \|(\nabla_{\zeta} J)\eta\|_g + \frac{\epsilon'}{\sqrt{2}} \|\zeta\|_g \|\eta\|_g \|(\nabla_{\zeta} J)\eta\|_g\\
        &=(\sqrt{6}\epsilon+\frac{\epsilon'}{\sqrt{2}})\|\zeta\|_g \|\eta\|_g \|(\nabla_{\zeta} J)\eta\|_g,
    \end{split}
\end{equation*} 

i.e. \[\|(\nabla_{\zeta} J)\eta\|^2_g \leq (\sqrt{6}\epsilon+\frac{\epsilon'}{\sqrt{2}})\|\zeta\|_g \|\eta\|_g \|(\nabla_{\zeta} J)\eta\|_g.\]

We claim that \[\|(\nabla_{\zeta} J)\eta\|_g \leq (\sqrt{6}\epsilon+\frac{\epsilon'}{\sqrt{2}})\|\zeta\|_g \|\eta\|_g\] for all vector fields $\zeta,\eta$. Indeed, if $\|(\nabla_{\zeta} J)\eta)\|_g = 0$, the claim reduces to the trivial fact  $(\sqrt{6}\epsilon+\frac{\epsilon'}{\sqrt{2}})\|\zeta\|_g \|\eta\|_g \geq 0.$ So assume that $\|(\nabla_{\zeta} J)\eta\|_g \neq 0$. Multiplying both sides by the function $\frac{1}{\|(\nabla_{\zeta} J)\eta\|_g}$ leads to the claimed inequality. Now, let $(e_i)_{i=1}^n$ be a local orthonormal frame of $T_X$. Then $\|(\nabla_{e_i} J)e_j\|_g \leq \sqrt{6}\epsilon+\frac{\epsilon'}{\sqrt{2}}$, so 

\begin{equation*}
    \begin{split}
        \|\nabla J \|^2_g &=\sum_{1\leq i,j\leq n} \|(\nabla_{e_i} J)e_j\|^2_g\\
        & \leq n^2 (\sqrt{6}\epsilon+\frac{\epsilon'}{\sqrt{2}})^2 
    \end{split}
\end{equation*}

Therefore, with $\epsilon''=n(\sqrt{6}\epsilon+\frac{\epsilon'}{\sqrt{2}})$, we deduce that \[\|\nabla J \|_g \leq \epsilon''.\]

For the converse, we use the formula in the proof of Lemma 1 of \cite{ACOTI}, namely \[N_J(\zeta,\eta)=J\big(d^{\nabla} J(J\zeta,J\eta)-d^{\nabla} J(\zeta,\eta)\big).\] Now, observe that for any vector fields $\zeta,\eta$, 

\begin{equation*}
    \begin{split}
        \|N_J(\zeta,\eta)\|_g &=\|d^{\nabla} J(J\zeta,J\eta)-d^{\nabla} J(\zeta,\eta)\|_g\\
        &\leq \|d^{\nabla} J(J\zeta,J\eta)\|_g + \|d^{\nabla} J(\zeta,\eta)\|_g\\
        &\leq \|(\nabla_{J\zeta} J)J\eta\|_g + \|(\nabla_{J\eta} J)J\zeta\|_g +\|(\nabla_{\zeta} J)\eta\|_g + \|(\nabla_{\eta} J)\zeta\|_g\\
        &\leq 4 \|\nabla J\|_g \|\zeta\|_g \|\eta\|_g \\
        &\leq 4\epsilon'' \|\zeta\|_g \|\eta\|_g.
    \end{split}
\end{equation*}
Here we used the fact that given a local orthonormal frame and vector fields $\zeta=\zeta^i e_i, \eta=\eta^j e_j$, setting $f_I=\zeta^i \eta^j$ and $v_I=(\nabla_{e_i} J) e_j$ for any $I\in \mathbb{N}^2_{\leq n}$, Cauchy-Schwarz gives that

\begin{equation*}
    \begin{split}
\|(\nabla_{\zeta} J)\eta\|^2_g &=\|\zeta^i \eta^j (\nabla_{e_i} J) e_j \|^2_g \\
&=\big\|\sum_{I \in \mathbb{N}^2_{\leq n}} f_I v_I \big\|^2_g \\
&\leq \big(\sum_{I \in \mathbb{N}^2_{\leq n}} |f_I|^2\big) \big(\sum_{I \in \mathbb{N}^2_{\leq n}} \|v_I\|^2_g\big)\\
&=\big(\sum_{1\leq i,j \leq n} |\zeta^i \eta^j|^2\big) \big(\sum_{1\leq i,j \leq n} \|(\nabla_{e_i} J) e_j\|^2_g\big)\\
&=\|\nabla J\|^2_g \|\zeta\|^2_g \|\eta\|^2_g
\end{split}
\end{equation*}

so that \[\|(\nabla_{\zeta} J)\eta\|_g \leq \|\nabla J\|_g \|\zeta\|_g \|\eta\|_g.\]
Hence, 

\begin{equation*}
    \begin{split}
        \|N_J\|^2_g &=\frac{1}{2!} \sum_{1\leq i,j\leq n} \|N_J(e_i,e_j)\|^2_g \\
        &\leq 8n^2 (\epsilon'')^2 ,
    \end{split}
\end{equation*}
and so with $\epsilon'=2\sqrt{2}\epsilon'' n$ we deduce that $\|N_J\|_g \leq \epsilon'.$

Finally, from the definition of differential and Lemma \ref{cov_kaehler}, it follows that $d\omega_J$ is also bounded. Indeed, since 

\begin{equation*}
    \begin{split}
        d\omega_J (\zeta,\eta,\rho)&=(\nabla_{\zeta} \omega_J)(\eta,\rho)-(\nabla_{\eta} \omega_J)(\zeta,\rho)+(\nabla_{\rho} \omega_J)(\zeta,\eta),
    \end{split}
\end{equation*}
we have that 
\begin{equation*}
    \begin{split}
        |d\omega_J (\zeta,\eta,\rho)|&\leq \big|g\big((\nabla_{\zeta} J)\eta,\rho\big) \big|+\big|g\big((\nabla_{\eta} J)\zeta,\rho\big) \big|+\big|g\big((\nabla_{\rho} J)\zeta,\eta\big) \big|\\
        &\leq 3\|\nabla J\|_g \|\zeta\|_g \|\eta\|_g \|\rho\|_g.
    \end{split}
\end{equation*}

Hence, 

\begin{equation*}
    \begin{split}
        \|d\omega_J\|^2_g&=\frac{1}{3!}\sum_{1\leq i,j,k\leq n} |d\omega_J (e_i,e_j,e_k)|^2\\
        &\leq \frac{1}{3!}\sum_{1\leq i,j,k\leq n} 9\|\nabla J\|^2_g\\
        &=\frac{3}{2}n^3 \|\nabla J\|^2_g\\
        &\leq \frac{3}{2}n^3 (\epsilon'')^2.
    \end{split}
\end{equation*}

So take $\epsilon=\sqrt{\frac{3}{2}n^3}\epsilon''$ to find that \[\|d\omega_J\|_g \leq \epsilon.\]
\end{proof}

\begin{definition}\label{KT_def2}
 Let $\epsilon>0.$ An almost-hermitian manifold $(X,g,J)$ is $\epsilon$-almost-complex-K{\"a}hler if $\|\nabla J\| \leq \epsilon.$ We say that $(X,g,J)$ is almost-complex-K{\"a}hler (acK) if it is $\epsilon$-acK for some constant $\epsilon.$ 
\end{definition}

Of course, K{\"a}hler manifolds are $\epsilon$-acK for all $\epsilon > 0$. In the other direction, a manifold that is $\epsilon$-acK for all $\epsilon >0$ is Kähler. Indeed $\|\nabla J\| \leq \epsilon$ for all positive $\epsilon$ is equivalent to $\nabla J = 0$.

\section{Compactness for acK manifolds}\label{3}

A $W^{2,p}\times W^{1,p}$ almost-hermitian manifold is a triple $(X,g,J)$ with $(X,g)$ a $W^{2,p}$ Riemannian manifold, and $J$ a $W^{1,p}$ almost-complex structure compatible with $g$ (i.e.\ pointwise $J$ is an orthogonal matrix). See Definition \ref{metric_regularity} and the paragraph directly below. We call the smooth structure of $X$ the {\em background structure}. Recall that if $p\gg 0$ is sufficiently big, we have the Sobolev embedding $W^{2,p}\subset C^{1,\alpha}$ for some $0<\alpha < 1$. With $p\gg 0$ we have $W^{1,p}\subset C^{0,\alpha}$, in particular the operator $J$ is continuously well-defined pointwise on $X$. In what follows and throughout the paper, the Sobolev exponent $p$ is 
chosen as $p\gg 0$ so that the Sobolev embedding theorems $W^{1,p} \subset C^0$ and various product theorems hold. 

The regularity conditions mean that in background smooth local coordinates, the matrix expressions for $g$ and $J$ have coefficients in $W^{2,p}$, respectively $W^{1,p}$ (that is to say the $g_{ij}$ are $W^{2,p}$ functions and the $J^j_i$ are $W^{1,p}$ functions). Polynomials in $W^{2,p}$ and $W^{1,p}$ remain in $W^{2,p}$ resp. $W^{1,p}$ by the Sobolev embeddings into continuous functions. Similar remarks hold for the inverse of a nowhere vanishing function. Thus, we can make the following observation.

\begin{remark}
\label{coeffs}
With respect to the background smooth structure, 
the matrix coefficients for the 
Hodge star operator $\star$ are in $W^{2,p}$, and
the matrix coefficients for $\nabla$ and $d^{\ast}$ acting on $k$-forms are in $W^{1,p}$. 
\end{remark}

\begin{definition}\label{ahconvergence_def}
A sequence of $n$-dimensional almost-hermitian manifolds $\{(X_i, g_i, J_i)\}$ converges in a $B \times B'$ topology (resp. weak $B\times B'$-topology) to an almost-hermitian manifold $(X,g,J)$, where $g$ is a $B$ Riemannian metric and $J$ is a $B'$ almost-complex structure, if for sufficiently large $i,$ there is a sequence of diffeomorphisms $\{f_i \colon X\to X_i\}$ such that $\{f^{*}_i g_i\}$ and $\{f^{*}_i J_i \coloneqq (f_i)^{-1}_* J_i (f_i)_* \colon T_X \to T_X\}$ converge (resp. converge weakly) on $X$ in the $B$ topology to $g$, and in the $B'$ topology to $J$.
\end{definition}

Theorem \ref{prepctness_thm}, whose proof will be given below, uses this definition. It says that there will be 
a convergent subsequence in the $W^{2,p}\times W^{1,p}$ topology with limiting $W^{2,p}\times W^{1,p}$ almost-hermitian manifold $(X_{\infty},g_{\infty},J_{\infty})$ meaning that $g_{\infty}$ is $W^{2,p}$ and $J_{\infty}$ is $W^{1,p}$. 

We will use the following standard functional analysis result.

\begin{theorem}\label{fa_thm}
Any bounded sequence in a reflexive Banach space has a weakly convergent subsequence. 
\end{theorem}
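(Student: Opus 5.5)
This is the standard fact that a bounded sequence $(x_m)$ in a reflexive Banach space $B$ has a weakly convergent subsequence. I would first reduce to the separable case. Let $B_0$ be the closed linear span of $\{x_m\}$. It is separable, being the closure of rational linear combinations of the $x_m$. It is also reflexive, since a closed subspace of a reflexive space is reflexive. Moreover, weak convergence in $B_0$ implies weak convergence in $B$, because by Hahn--Banach every functional on $B$ restricts to a functional on $B_0$. So we may assume $B$ is separable and reflexive.

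Next I would show that $B^*$ is separable. Reflexivity gives $B^{**}=B$, and $B$ is separable. The standard lemma that separability of a dual space $Y^*$ forces separability of $Y$, applied to $Y=B^*$, then shows $B^*$ is separable. Choose a countable dense set $\{\phi_j\}\subset B^*$. For each $j$ the scalar sequence $(\phi_j(x_m))_m$ is bounded by $\|\phi_j\|\sup_m\|x_m\|$. A Cantor diagonal argument then produces a subsequence $(y_m)$ along which $\phi_j(y_m)$ converges for every $j$. An $\varepsilon/3$ argument using density and the uniform bound $M=\sup\|x_m\|$ shows that $\phi(y_m)$ is Cauchy for every $\phi\in B^*$.

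Finally, define $\Lambda(\phi)=\lim_m \phi(y_m)$. This is linear on $B^*$ with $|\Lambda(\phi)|\le M\|\phi\|$, so $\Lambda\in B^{**}$. By reflexivity there is $y\in B$ with $\Lambda(\phi)=\phi(y)$ for all $\phi$, i.e.\ $y_m\rightharpoonup y$ weakly.

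The only genuinely nontrivial step is the passage from separability of $B$ to separability of $B^*$, which uses reflexivity together with the lemma on dual separability. Alternatively, one can bypass this step by citing Banach--Alaoglu and Kakutani's theorem: the closed unit ball of a reflexive space is weakly compact. Weak sequential compactness then follows either from Eberlein--\v{S}mulian or from the metrizability of the weak topology on bounded sets of the separable space $B_0$. In the paper it is applied with $B=W^{2,p}$ and $B=W^{1,p}$ for $1<p<\infty$. Their reflexivity follows by identifying each isometrically with a closed subspace of a finite product of $L^p$ spaces, which are uniformly convex and hence reflexive by Milman--Pettis.
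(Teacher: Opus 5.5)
Your proof is correct. The paper gives no proof of this statement. It quotes it as a standard functional-analysis fact and uses it as a black box: for bounded sequences in $W^{1,p}$ and $L^p$ in the proof of Theorem~\ref{prepctness_thm}, and implicitly for $W^{2,p}$ when rephrasing Anderson's theorem.

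Your argument is the usual self-contained one: separable reduction, separability of the dual via reflexivity, a Cantor diagonal extraction, and identification of the limit functional in $B^{**}$ with a point of $B$. You also supply the one input the paper leaves tacit, namely that $W^{k,p}$ is reflexive for $1<p<\infty$. The paper's citation buys brevity; your version makes these dependencies explicit. In addition:
\begin{itemize}
\item $W^{k,p}$ is separable for $p<\infty$, so your first reduction step is not needed for the paper's applications.
\item Your construction gives $|\phi(y)|\le\liminf_m\|y_m\|\,\|\phi\|$. Pairing with a norming functional for $y$ then yields the bound $\|f\|_{W^{1,p}}\le\liminf\|f_{k_l}\|_{W^{1,p}}$ that the paper states right after the theorem.
\item Unlike the Banach--Alaoglu/Kakutani route you mention as an alternative, your sequential diagonal argument does not rely on Tychonoff-type compactness. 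This fits the paper's closing interest in more constructive and formalizable arguments.
\end{itemize}

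Two small points of precision:
\begin{itemize}
\item Restricting functionals from $B$ to $B_0$ needs no Hahn--Banach. Hahn--Banach is used instead in showing that closed subspaces of reflexive spaces are reflexive, and in the dual-separability lemma.
\item In your alternative route, the weak topology on bounded sets of $B_0$ is metrizable because $B_0^*$ is separable. Separability of $B_0$ alone does not suffice, as $\ell^1$ shows; your second step is what provides separability of $B_0^*$.
\end{itemize}
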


By Theorem \ref{fa_thm}, a bounded sequence $\{f_k\} \subset W^{1,p}$ (i.e.\ $\|f_k\|_{W^{1,p}} \leq C$ for some $C$ and all $k$) has a subsequence $\{f_{k_l}\}$ that converges weakly to some $f\in W^{1,p}.$ Moreover, we have that \[\|f\|_{W^{1,p}} \leq \liminf{\|f_{k_l}\|_{W^{1,p}}}=C.\] We certainly have the same statement for $L^p$ bounded sequences.

\noindent
\emph{Proof of Theorem \ref{prepctness_thm}}. Let $a>0,$ $(\epsilon_i)_{i=1}^{\infty} \subset (0,a]$ be a sequence and $\epsilon \coloneqq \liminf{\epsilon_i}$. We first show that any sequence $\{(X_i,g_i,J_i)\}_{i=1}^{\infty}$ of respectively $\epsilon _i$-acK manifolds in $\mathcal{A}(n,d,k,i_0,a)$ has a subsequence $\{(X_{i_l},g_{i_l},J_{i_l})\}_{l=1}^{\infty}$ that converges in the weak-$W^{2,p} \times W^{1,p}$ topology to an almost-hermitian manifold $(X_{\infty},g_{\infty},J_{\infty})$ satisfying $\| \nabla^{\infty} J_{\infty} \| _{L^p} \leq \epsilon,$ where $\nabla^{\infty}$ is the Levi-Civita connection of the limit metric and the $L^p$ norm is the one induced by that metric. See the final paragraph to understand why this is enough to conclude that the limit belongs to $\mathcal{A}(n,d,k,i_0,a).$

By Theorem \ref{Acpctness}, we obtain a subsequence $\{(X_{i_l},g_{i_l})\}_{l=1}^{\infty}$ of $\{(X_i,g_i)\}_{i=1}^{\infty}$ converging in the weak-$W^{2,p}$ topology to a $W^{2,p}$ Riemannian manifold $(X_{\infty},g_{\infty}).$ We are assuming that for large enough $i,$ $\|\nabla^{g_i} J_i\|_{g_i} \leq \epsilon,$ which implies that $\|\nabla^i f^*_i J_i\|_{f^*_i g_i} \leq \epsilon,$ where $\nabla^i$ is the Levi-Civita connection of the Riemannian metric $f^{*}_i g_i$ on $X_{\infty}$.
Notice that $J_i$ is also bounded in norm, since it is pointwise an orthogonal matrix with respect to $g_i$ and the $g_i$ are all equivalent since they approach a limit strongly in $C^1$. Therefore, $J_i$ is bounded in the $W^{1,p}$ norm. The weak(*) and weak topologies in $W^{1,p}$ are the same, so we can extract a subsequence $J_{i_l}$ that converges weakly in $W^{1,p}$ to a limit $J_{\infty}$ which is a $W^{1,p}$ almost-complex structure compatible with the limit $g_{\infty}$. We can apply Theorem \ref{fa_thm} to the sequence $\{\nabla^i f^*_i J_i\}$ to deduce that the subsequence $\{\nabla^{i_l} f^*_{i_l} J_{i_l}\}$ converges weakly in $L^p,$ and the limit is also bounded by $\epsilon$, i.e.\  $\| \nabla^{\infty} J_{\infty} \| _{L^p} \leq \epsilon.$

We now justify that a.e. $\| \nabla^{\infty} J_{\infty} \| _{g_{\infty}} \leq \epsilon$ as well, so in this sense $(X_{\infty},g_{\infty},J_{\infty}) \in \mathcal{A}(n,d,k,i_0,a)$. We do this by looking at the uniformity in terms of $p$. One should note however that the discussion of \cite{Anderson_notes} requires using a fixed value of $p$ since the lower bound for the $W^{2,p}$ harmonic radius may depend on $p$. Consider some $p'>p$, and extract a subsequence having a weak $W^{2,p'}\times W^{1,p'}$ limit 
$(X'_{\infty},g'_{\infty},J'_{\infty})$ 
according to the first part of this proof applied to $p'$.  
Now see the paragraph after the statement of 
Theorem \ref{Acpctness} to compare the two limiting manifolds. The limiting image by the harmonic coordinates for both limiting processes provides the graph of a $W^{3,p}$ isomorphism $X_{\infty}\cong X'_{\infty}$ which we may assume respects both $g$ and $J$. Use it to transport the bound 
$\| \nabla^{\prime , \infty} J'_{\infty} \| _{L^{p'}} \leq \epsilon$ from $X'_{\infty}$ back to $X_{\infty}$. This gives the bound $\| \nabla^{\infty} J_{\infty} \| _{L^{p'}} \leq \epsilon$. That bound is therefore valid for any $p'$. An $L^{p'}$ bound that is uniform in $p'\rightarrow \infty$ implies the same bound in $L^{\infty}$, so we get the required 
$\| \nabla^{\infty} J_{\infty} \| _{g_{\infty}} \leq \epsilon$ 
as a bound on this measurable function, almost everywhere. 
\QEDB

\section{$L^2$ cohomology}\label{5}

Consider a $W^{2,p}$ Riemannian manifold 
$(X,g)$. We would like to obtain a Hodge decomposition result in this setting. We are going to use $L^2$ cohomology involving forms with a lower level of regularity than the ambient space. We rely, initially, on the main results for $L^2$ cohomology of the smooth background manifold $X$, recalling that the exterior differential does not depend on $g$. 

Introduce the following notation: $A^k[\ldots ]$ will indicate the space of global sections of $k$-forms on $X$, with regularity and other conditions in the brackets. Thus $A^k[L^2]$ denotes the space of $k$-forms with $L^2$ coefficients. These will be considered as subspaces of the space of distribution-valued $k$-forms, so we may apply differential operators to get other distribution-valued forms or sections. 

\subsection{The $L^2$ cohomology complex}\label{5.1}

In this subsection everything will refer to the smooth background manifold. 

Define the $L^2$ domain of $d$ by
\[
A^k[L^2,Dom(d)] \coloneqq
\{ \alpha \in A^k[L^2] \mid d(\alpha ) \in A^{k+1}[L^2] \} .
\] 
We have 
\[
A^k[L^2,Dom(d)]
\stackrel{d}{\rightarrow}
A^{k+1}[L^2,Dom(d)] 
\]
indeed for $\alpha \in A^k[L^2,Dom(d)]$ then 
$d(d(\alpha )) = 0$ (true with distribution coefficients) so $d(\alpha )$, which 
{\em a priori} is in $A^k[L^2]$, may also be seen as an element of 
$A^{k+1}[L^2,Dom(d)] $.

We get a complex of vector spaces
\[ 
A^{\cdot }[L^2,Dom(d)] := 
\left( 
\ldots \stackrel{d}{\rightarrow}
A^k[L^2,Dom(d)]
\stackrel{d}{\rightarrow}
\ldots \right) . 
\]
This complex maps into the usual complex of smooth forms 
$A^{\cdot }[C^{\infty}]$.

\begin{theorem}
\label{classicL2}
The map from 
$A^{\cdot }[L^2,Dom(d)] $ to 
$A^{\cdot }[C^{\infty}]$ induces an isomorphism of cohomology, in other words the cohomology of the complex $A^{\cdot }[L^2,Dom(d)] $ is naturally isomorphic to the De Rham cohomology of $X$. The image $dA^{\cdot }[L^2,Dom(d)] $ is a closed subspace of 
$A^{\cdot }[L^2]$.
\end{theorem}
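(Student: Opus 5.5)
The plan is to compare $A^{\cdot}[L^2,Dom(d)]$ with the smooth de Rham complex through a smoothing operator that commutes with $d$. Fix an auxiliary smooth Riemannian metric $h$ on the background manifold $X$; since $X$ is compact, the $L^2$ spaces defined by $h$ and by any continuous metric coincide with equivalent norms. For $h$ we have the classical smooth Hodge theory: the Laplacian $\Delta_h$, the finite-dimensional space $\mathcal{H}^k_h$ of harmonic forms, the harmonic projection $H$, and the Green operator $G$. These give the identities
\[
\operatorname{Id} = H + dd^*_hG + d^*_h dG, \qquad [G,d]=0,
\]
and both extend by continuity (and by duality) to distribution-valued forms. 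I would use $G$ and $H$, rather than a mollifier, to build the comparison.

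The first step is injectivity on cohomology. Take $\alpha \in A^k[L^2,Dom(d)]$ with $d\alpha=0$, and suppose its smooth representative $H\alpha$ is exact. We have $\alpha = H\alpha + d(d^*_hG\alpha) + d^*_hG\,d\alpha$, and the last term vanishes. Here $\beta:=d^*_hG\alpha$ lies in $A^{k-1}[W^{1,2}]$ by elliptic regularity, since $G$ gains two derivatives. Hence $\beta\in A^{k-1}[L^2,Dom(d)]$ and $\alpha - H\alpha = d\beta$. Since $H\alpha$ is exact and harmonic, it is zero, so $\alpha=d\beta$ is exact in the $L^2$ complex.

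The second step is surjectivity and the comparison map. I would not use the naive inclusion of smooth forms into $L^2$ forms, because the statement's map goes from $L^2$ to $C^\infty$; I would realise it via $H$, which is a chain map up to homotopy. Every smooth closed form already lies in $A^k[L^2,Dom(d)]$, so surjectivity is immediate once we show that $H$ and the inclusion are mutually inverse on cohomology. That follows from the homotopy formula above, whose homotopy $d^*_hG$ preserves the domain: for $\alpha\in Dom(d)$, $d\,d^*_hG\alpha = \alpha - H\alpha - d^*_hG\,d\alpha \in L^2$.

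The third step is closedness of the image. Let $\gamma_j = d\alpha_j \to \gamma$ in $L^2$. Replacing $\alpha_j$ by $d^*_hG\gamma_j$, which has the same differential modulo closed forms by the first step, we get $\gamma_j = d(d^*_hG\gamma_j)$, since $d\gamma_j=0$ and $H\gamma_j=0$ because $\gamma_j$ is exact and hence $L^2$-orthogonal to $\mathcal{H}_h$. Now $d^*_hG$ is bounded $L^2\to W^{1,2}$, so $d^*_hG\gamma_j \to d^*_hG\gamma$ and $d$ of these converges distributionally. Therefore $\gamma = d(d^*_hG\gamma)$ lies in the image.

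The main obstacle is justifying that $G$, $H$ and the homotopy identity extend to $L^2$ and to distributional forms while still commuting with $d$. This needs a careful duality argument, together with the fact that $H\gamma=0$ for an exact $L^2$ form $\gamma$, which holds because harmonic forms are $d^*_h$-closed and smooth, so $\langle d\alpha, h\rangle = \langle \alpha, d^*_h h\rangle = 0$.
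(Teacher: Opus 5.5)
Your proof is correct, and it takes a different route from the paper in the trivial sense that the paper gives no proof of Theorem~\ref{classicL2}: it calls it the base case of $L^2$ cohomology theory and defers to the literature.

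You instead derive it directly from smooth Hodge theory for an auxiliary background metric. On $A^{\cdot}[L^2,Dom(d)]$ the operator $K=d^*_hG$ maps $L^2$ into $W^{1,2}\subset Dom(d)$ and satisfies $\mathrm{Id}-\iota H=dK+Kd$. Hence the inclusion of smooth forms and the harmonic projection $H$ are mutually inverse chain homotopy equivalences. Closedness then follows by passing to the limit in $\gamma_j=dK\gamma_j$, using that $K\colon L^2\to W^{1,2}$ is bounded.

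This is exactly the Green-operator decomposition the paper itself uses in the proof of Lemma~\ref{domd}. In effect you prove the second half of that lemma, $A^k[L^2,\ker d]=\mathcal{H}^k_{bk}\oplus dA^{k-1}[W^{1,2}]$, and deduce the theorem from it. What your route buys:
\begin{itemize}
\item The passage from Theorem~\ref{classicL2} to Corollary~\ref{decompcor} becomes self-contained.
\item You get the identification $Im(d)=dA^{k-1}[W^{1,2}]$ that the paper needs there anyway.
\item It makes precise the direction of the comparison map, which the statement phrases loosely. What the paper actually uses later, for instance in Proposition~\ref{reg12}, is that every $L^2$-closed form differs from a smooth representative by an element of $Im(d)$, and your homotopy gives exactly that.
\end{itemize}

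The ``main obstacle'' you flag is mild. $dG$ and $Gd$ are both bounded $L^2\to W^{1,2}$ and agree on smooth forms, so they agree on $L^2$.

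Two small imprecisions:
\begin{itemize}
\item $H$ is an honest chain map, since $dH=0=Hd$ on $Dom(d)$. What is only homotopic to the identity is $\iota H$.
\item In your third step, $d^*_hG\gamma_j$ has exactly the same differential as $\alpha_j$, not the same differential ``modulo closed forms''. It is the two primitives that differ by a closed form.
\end{itemize}
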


Theorem \ref{classicL2} is the base case of $L^2$ cohomology theory, for a compact smooth manifold. It is subsumed in the many references on this topic, we note for example  
\cite{Aubin, Carron, CKS, CheegerGoreskyMacPherson, DodziukL2, DodziukSobolev, MitreaMitrea,
Schmid, Zucker}.

\subsection{The domain of $d$}\label{5.2}

It will be useful to have a more precise description of the domain of $d$. 
In this subsection we again refer to the smooth background manifold, up until the discussion of the domain of $d^{\ast}$ at the end.  

\begin{lemma}
\label{domd}
The domain of $d$ may be expressed as a sum (though this is not a direct sum)
\[
A^k[L^2,Dom(d)] = A^k[L^2,\ker (d)] + A^k[W^{1,2}]
\]
where $A^k[L^2,\ker (d)]$ is the space of $\alpha \in A^k[L^2]$ with $d(\alpha ) = 0$ and $A^k[W^{1,2}]$ is the space of $k$-forms with coefficients in the Sobolev space $W^{1,2}$. Furthermore we have a direct sum decomposition
\[ 
A^k[L^2,\ker (d)] = \mathcal{H}^k_{bk} \oplus 
d\left(  A^{k-1}[W^{1,2}] \right) .
\]
Here $\mathcal{H}^k_{bk}$ is the finite-dimensional space of harmonic forms for the smooth background metric, and recall that these harmonic forms are smooth. 
\end{lemma}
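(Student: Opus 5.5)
The plan is to work entirely with the smooth background metric and its Hodge Laplacian $\Delta_{bk}=dd^*+d^*d$, which is a smooth elliptic operator of order two on $k$-forms. Elliptic theory on the compact manifold $X$ gives a Green's operator $G$ and the $L^2$-orthogonal decomposition
\[
A^k[L^2]=\mathcal{H}^k_{bk}\oplus \Delta_{bk}G\,A^k[L^2],
\]
where $G$ maps $A^k[W^{s,2}]$ to $A^k[W^{s+2,2}]$ for every $s$, including negative $s$. So for $\alpha\in A^k[L^2]$ we can write
\[
\alpha = h + d\big(d^*G\alpha\big) + d^*\big(dG\alpha\big),
\]
with $h\in\mathcal{H}^k_{bk}$ smooth, $G\alpha\in A^k[W^{2,2}]$, and hence $d^*G\alpha\in A^{k-1}[W^{1,2}]$ and $dG\alpha\in A^{k+1}[W^{1,2}]$.

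\emph{Second decomposition.} Suppose $d\alpha=0$. Because $G$ commutes with $d$ as operators on distributional forms, $dG\alpha=Gd\alpha=0$. Therefore $\alpha=h+d(d^*G\alpha)$, and this lies in $\mathcal{H}^k_{bk}+d(A^{k-1}[W^{1,2}])$. The reverse inclusion is immediate: harmonic forms are closed, and $d$ of a $W^{1,2}$ form is an $L^2$ form that is closed in the distributional sense. For directness, note that $\mathcal{H}^k_{bk}$ is $L^2$-orthogonal to $d\beta$ for every $\beta\in A^{k-1}[W^{1,2}]$. Indeed $\langle h,d\beta\rangle=\langle d^*h,\beta\rangle=0$, and this integration by parts is justified by density of smooth forms in $W^{1,2}$.

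\emph{First decomposition.} The inclusion $\supseteq$ is clear, since $W^{1,2}$ forms have $d$ in $L^2$. For $\subseteq$, take $\alpha\in A^k[L^2,Dom(d)]$ and use the same splitting. The pieces $h+d(d^*G\alpha)$ lie in $A^k[L^2,\ker(d)]$. The remaining piece is $\gamma:=d^*dG\alpha=d^*G(d\alpha)$, and we need $\gamma\in A^k[W^{1,2}]$. Since $d\alpha\in A^{k+1}[L^2]$, we get $G(d\alpha)\in A^{k+1}[W^{2,2}]$, so $d^*G(d\alpha)\in A^k[W^{1,2}]$, as required.

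\emph{Main obstacle.} The delicate step is justifying the commutation $Gd=dG$, and more generally the elliptic regularity statements, at the level of distributions with the needed Sobolev indices. I would handle it as follows. First, $\Delta_{bk}$ commutes with $d$ on smooth forms. By duality and density this extends to distributions. The projection onto harmonic forms also commutes with $d$, since both composites vanish: $d$ kills harmonic forms, and $d\alpha$ is orthogonal to harmonic forms. It then follows that $G=(\Delta_{bk}|_{\mathcal{H}^\perp})^{-1}$ commutes with $d$ on $W^{s,2}$ for all $s$.

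Everything else is standard: elliptic estimates for $\Delta_{bk}$ give $G\colon W^{s,2}\to W^{s+2,2}$, and finite-dimensionality and smoothness of $\mathcal{H}^k_{bk}$ is classical Hodge theory (Theorem \ref{Hodgethm2}). The non-directness of the first sum needs no argument, as it is only a remark.
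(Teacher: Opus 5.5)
Your proof is correct and follows essentially the paper's route: the same Green's-operator splitting $\alpha = h + dd^{\ast}G\alpha + d^{\ast}dG\alpha$ of an $L^2$ form, differing only in how you show the coexact piece is $W^{1,2}$. You use $dG = Gd$ together with $G\colon L^2\to W^{2,2}$, whereas the paper applies elliptic regularity directly to $(d+d^{\ast}_{bk})\alpha'' = d\alpha \in L^2$ (equivalently $\Delta_{d,bk}\alpha'' = d^{\ast}_{bk}(d\alpha)\in W^{-1,2}$). Both rest on the same background elliptic regularity, and your explicit argument for the second decomposition spells out what the paper simply cites as the smooth Hodge decomposition theorem.
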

\begin{proof}
Working with the smooth background metric we may apply the theory of the Green's operator $G$. If $\alpha \in A^k[L^2]$ then it may be decomposed as
\[
\alpha = a + \Delta _{d,bk} G (\alpha )
\]
and $G(\alpha ) \in A^k[W^{2,2}]$. Thus
\[
\alpha = \alpha ' + \alpha ''
\]
with 
\[
\alpha ' := a+dd^{\ast}_{bk}G(\alpha )\in A^k[L^2,\ker (d)]
\]
and 
\[
\alpha '' :=  d^{\ast}_{bk} d G(\alpha ) \in A^k[L^2,\ker (d^{\ast}_{bk})].
\]
Suppose now that $\alpha \in A^k[L^2,Dom(d)]$. It means that there is
$\eta \in  A^{k+1}[L^2]$ with $d(\alpha ) = \eta$ as distributions. 
Then $d(\alpha ')=0$  gives $d(\alpha '') = \eta$. 

One may now use the statement that $d+d^{\ast}_{bk}$ is a first-order elliptic operator
whose value on $\alpha ''$ is $\eta \in A^{k+1}[L^2]$. Elliptic regularity implies that
$\alpha '' \in A^k[W^{1,2}]$. 

Another way of saying this is with the slightly more classical (although using negative Sobolev spaces) regularity for the second order elliptic operator $\Delta_{d,bk}$ as follows. 
As distributions, 
\[
\Delta _{d,bk}(\alpha '') = d^{\ast}_{bk}(\eta ) \in A^k[W^{-1,2}]
\]
so second-order elliptic regularity gives $\alpha '' \in A^k[W^{1,2}]$.

We have decomposed $\alpha \in A^k[L^2,Dom(d)]$ as a sum 
$\alpha = \alpha ' + \alpha ''$ with $\alpha ' \in A^k[L^2,\ker (d)]$ and
$\alpha '' \in A^k[W^{1,2}]$. This is the first statement of the lemma. 

The second statement follows from the Hodge decomposition theorem for the smooth background metric.
\end{proof}

Now we consider the operator $d^{\ast}$ that depends on the $W^{2,p}$ Riemannian metric $g$ on $X$. If $\alpha \in A^k[L^2]$ then $\star \alpha \in A^{n-k}[L^2]$ and 
using negative Sobolev spaces, 
$d(\star \alpha ) \in A^{n-k}[W^{-1,2}]$. Since $\star$ is in $W^{2,p}\subset C^1$, multiplication by $\star$ preserves the $W^{-1,2}$ negative Sobolev space (one may argue using distributions as functionals on test-functions here). This allows us to define $d^{\ast}\alpha := -\ast d\ast \alpha$ as a distribution. 

\begin{lemma}
\label{domdstar}
The domain of $d^{\ast}$ may be expressed as a sum (though this is not a direct sum)
\[
A^k[L^2,Dom(d^{\ast})] = A^k[L^2,\ker (d^{\ast})] + A^k[W^{1,2}]
\]
where $A^k[L^2,\ker (d^{\ast})]$ is the space of $\alpha \in A^k[L^2]$ with $d^{\ast}(\alpha ) = 0$ and $A^k[W^{1,2}]$ is the space of $k$-forms with coefficients in the Sobolev space $W^{1,2}$. Furthermore we have a direct sum decomposition
\[ 
A^k[L^2,\ker (d^{\ast})] = \star \mathcal{H}^{2n-k}_{bk} \oplus 
d^{\ast}\left(  A^{k-1}[W^{1,2}] \right) .
\]
The forms in $\star \mathcal{H}^{2n-k}_{bk}$ are in $A^k[W^{2,p}] \subset 
A^k[C^1]$.
\end{lemma}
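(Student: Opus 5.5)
The plan is to conjugate Lemma \ref{domd} by the Hodge star of the $W^{2,p}$ metric $g$. On $k$-forms we have $d^{\ast} = \pm \star d \star$, and $\star$ is invertible with $\star\star = \pm 1$. By Remark \ref{coeffs} its matrix coefficients lie in $W^{2,p}\subset C^{1}$. Two consequences follow:
\begin{itemize}
\item multiplication by $\star$ preserves $L^2$, $W^{1,2}$ and $W^{-1,2}$, since multiplication by a $C^1$ (indeed $W^{2,p}$, $p\gg 0$) function preserves these spaces;
\item for $\alpha\in A^k[L^2]$, the statements $d^{\ast}\alpha\in A^{k-1}[L^2]$ and $d(\star\alpha)\in L^2$ are equivalent, because $d(\star \alpha) = \pm \star d^{\ast}\alpha$.
\end{itemize}
Hence $\star$ maps $A^k[L^2,Dom(d^{\ast})]$ bijectively onto $A^{n-k}[L^2,Dom(d)]$, and likewise $A^k[L^2,\ker(d^{\ast})]$ onto $A^{n-k}[L^2,\ker(d)]$. (The statement writes $2n-k$, so I read $\dim_{\mathbb{R}} X$ as $2n$ there; the argument is the same with degree $\dim X - k$.)

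For the first decomposition, take $\alpha\in A^k[L^2,Dom(d^{\ast})]$ and set $\beta=\star\alpha\in A^{n-k}[L^2,Dom(d)]$. Lemma \ref{domd} gives $\beta=\beta'+\beta''$ with $d\beta'=0$ and $\beta''\in A^{n-k}[W^{1,2}]$. Applying $\star$ (up to sign) yields $\alpha=\pm\star\beta'\pm\star\beta''$. Here $\star\beta'\in A^k[L^2,\ker(d^{\ast})]$, and $\star\beta''\in A^k[W^{1,2}]$ because $\star$ has $W^{2,p}\subset C^1$ coefficients and $\nabla(\star\beta'') = (\nabla\star)\beta''+\star\nabla\beta''$ lies in $L^2$. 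The reverse inclusion needs $A^k[W^{1,2}]\subset Dom(d^{\ast})$. This follows from Remark \ref{coeffs}: $d^{\ast}$ has $W^{1,p}\subset C^0$ coefficients on first derivatives, plus zeroth-order terms involving first derivatives of $g$, which are continuous.

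For the direct sum, apply $\star$ to the second decomposition in Lemma \ref{domd} in degree $n-k$:
\[
A^{k}[L^2,\ker(d^{\ast})] = \star \mathcal{H}^{n-k}_{bk} \oplus \star d\left(A^{n-k-1}[W^{1,2}]\right).
\]
Since $\star$ is a linear bijection, directness is preserved. Then $\star d \gamma = \pm \star d\star(\star\gamma)=\pm d^{\ast}(\star\gamma)$, and $\gamma\mapsto\star\gamma$ is a bijection of $A^{n-k-1}[W^{1,2}]$ onto $A^{k+1}[W^{1,2}]$ by the multiplier property. So the second summand is $d^{\ast}(A^{k+1}[W^{1,2}])$; the index written as $k-1$ in the statement should be $k+1$, since $d^{\ast}$ lowers degree. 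Finally, the background harmonic forms are smooth and $\star$ has $W^{2,p}$ coefficients, so $\star\mathcal{H}_{bk}\subset A^k[W^{2,p}]\subset A^k[C^1]$ by the Sobolev embedding for $p\gg 0$.

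The main obstacle is justifying the distributional identities with a non-smooth $\star$. One needs $d(\star\alpha)=\pm\star d^{\ast}\alpha$ for $\alpha\in L^2$ as an identity in $W^{-1,2}$, and that multiplication by $\star$ is an isomorphism of $W^{-1,2}$ and of $W^{1,2}$. I would prove these by duality against smooth test forms, using that multiplication by a $C^1$ function is bounded on $W^{1,2}$, and hence by transposition on $W^{-1,2}$. The identity itself is simply the definition of $d^{\ast}$ given before the lemma, composed with $\star\star=\pm1$.
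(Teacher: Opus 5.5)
Your proposal is correct and follows the paper's proof. The paper likewise obtains the lemma by conjugating Lemma~\ref{domd} with the $W^{2,p}$ (hence $C^1$) Hodge star via $d^{\ast}=-\star d\star$; you additionally supply the multiplier and distributional justifications the paper leaves implicit. Your index corrections are also right: the complementary degree is $\dim X-k$, and the second summand should be $d^{\ast}(A^{k+1}[W^{1,2}])$, which is how the lemma is actually used in Proposition~\ref{stokes} and Corollary~\ref{decompcor}.
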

\begin{proof}
Since $d^{\ast} = -\star d \star$ we get this statement by conjugating the previous one with the Hodge star $\star$. Recall that $\ast$ has $W^{2,p}$ coefficients and these are in $C^1$. 
\end{proof}

\subsection{Stokes' theorem}\label{5.3}

In this section we look at the statement 
$(d(\alpha ), \beta )_{L^2}  = (\alpha , d^{\ast}(\beta ))_{L^2} $
for the formal adjoint $d^{\ast}$ defined using the $W^{2,p}$ 
Riemannian metric $g$.

\begin{proposition}
\label{stokes}
Suppose 
$\alpha \in A^{k}[L^2,Dom(d)]$ and $\beta \in A^{k+1}[L^2,Dom(d^{\ast})]$.
Then 
\[
(d(\alpha ), \beta )_{L^2} =  
(\alpha ,d^{\ast}( \beta  ))_{L^2} .
\]
\end{proposition}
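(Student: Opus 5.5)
We prove the identity by splitting each form according to Lemmas \ref{domd} and \ref{domdstar}, and then checking the resulting pieces separately. Write $\alpha = \alpha' + \alpha''$ with $\alpha' \in A^k[L^2,\ker(d)]$ and $\alpha'' \in A^k[W^{1,2}]$. Refine further $\alpha' = h + d\gamma$ with $h\in \mathcal{H}^k_{bk}$ smooth and $\gamma \in A^{k-1}[W^{1,2}]$. Similarly write $\beta = \beta' + \beta''$ with $\beta''\in A^{k+1}[W^{1,2}]$ and $\beta' = \star h' + d^{\ast}\delta$, where $\star h'$ has $W^{2,p}\subset C^1$ coefficients and $\delta \in A^{k+2}[W^{1,2}]$. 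Both sides of the identity are bilinear, so it suffices to verify it for each pair of pieces. Since $d\alpha' = 0$, the left side equals $(d\alpha'',\beta)_{L^2}$. Likewise $d^{\ast}\beta'=0$, so the right side equals $(\alpha, d^{\ast}\beta'')_{L^2}$.

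The basic case is when both forms have $W^{1,2}$ (or better) coefficients. Recall that $d^{\ast}=-\star d \star$, with $\star$ having $W^{2,p}$ coefficients. We have
\[
(d\alpha,\beta)_{L^2} = \int_X d\alpha\wedge \star\beta .
\]
If $\alpha\in A^k[W^{1,2}]$ and $\beta\in A^{k+1}[W^{1,2}]$, then $\star\beta\in W^{1,2}$, because multiplication by $W^{2,p}\subset C^1$ functions preserves $W^{1,2}$. The form $\alpha\wedge\star\beta$ is then an $(n-1)$-form with $W^{1,1}$ coefficients, and Stokes' theorem $\int_X d(\alpha\wedge\star\beta)=0$ holds for it. This follows by approximating $\alpha$ and $\star\beta$ by smooth forms in $W^{1,2}$: the product of two $W^{1,2}$-convergent sequences converges in $W^{1,1}$, and $\int_X d(\cdot)$ is continuous on $W^{1,1}$. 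Expanding by the Leibniz rule, with the standard sign conventions, gives $(d\alpha,\beta)=(\alpha,d^{\ast}\beta)$ in this case.

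With the basic case available, it remains to handle the mixed pairs.

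\begin{itemize}
\item \emph{The pairing $(d\alpha'',\beta')$.} We have
\[
(d\alpha'',\star h' + d^{\ast}\delta).
\]
The term with $\star h'$ falls under the basic case, since $\star h'\in C^1$ and $d^{\ast}\star h'=0$. For the term with $d^{\ast}\delta$, the basic case applied to the $W^{1,2}$ pair $(\alpha'',\delta)$ is not directly usable, because $d^{\ast}\delta$ is only $L^2$. Instead we argue by density: take smooth $\alpha_j\to\alpha''$ in $W^{1,2}$. Then
\[
(d\alpha_j,d^{\ast}\delta)=(dd\alpha_j,\delta)=0
\]
by the basic case applied to $(d\alpha_j,\delta)$. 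Passing to the limit gives $(d\alpha'',d^{\ast}\delta)=0$, which matches the right side $(\alpha'',d^{\ast}\beta')=0$.
\item \emph{The pairing $(\alpha',d^{\ast}\beta'')$.} Symmetrically, this vanishes. The term $(h,d^{\ast}\beta'')$ is treated by the basic case, since $h$ is smooth and $dh=0$. The term $(d\gamma,d^{\ast}\beta'')$ is treated by the same density argument, now approximating $\beta''$ by $C^1$ forms in $W^{1,2}$ and using $d^{\ast}d^{\ast}=0$.
\end{itemize}

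Putting the pieces together,
\[
(d\alpha,\beta)=(d\alpha'',\beta'')+(d\alpha'',\beta')=(d\alpha'',\beta'') = (\alpha'',d^{\ast}\beta'') = (\alpha,d^{\ast}\beta''),
\]
which is the right side.

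The main obstacle is the density steps. We need $d^{\ast}d^{\ast}=0$ as distributions for the low-regularity $\star$; this holds because $\star\star=\pm 1$ pointwise, which gives $d^{\ast}d^{\ast}=\pm\star dd\star=0$. We also need to justify the approximation of $W^{1,2}$ forms by forms regular enough to apply the basic case.
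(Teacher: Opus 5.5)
Your proof is correct and is essentially the paper's argument. It uses the same decompositions $\alpha = h + d\gamma + \alpha''$ and $\beta = \star h' + d^{\ast}\delta + \beta''$ from Lemmas \ref{domd} and \ref{domdstar}, the same four vanishing cross terms, and density of smooth forms in $W^{1,2}$; the only difference is that you prove the $W^{1,2}$ case directly via Stokes for $W^{1,1}$ forms, while the paper proves it for smooth forms and extends by continuity.

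One small fix is needed in the second bullet. The approximants $\beta_j$ of $\beta''$ should be smooth, not merely $C^1$, so that $d^{\ast}\beta_j \in W^{1,p}\subset W^{1,2}$ and your basic case applies to $(\gamma, d^{\ast}\beta_j)$. Alternatively, approximate $\gamma$ by smooth forms exactly as in the first bullet, which avoids $d^{\ast}d^{\ast}=0$ there.
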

\begin{proof}
Suppose $\alpha$ and $\beta$ are smooth forms with respect to the background 
structure. Assume they have real coefficients just to avoid complex conjugation in the notation. 
Recall that 
\[ 
(d(\alpha ), \beta )_{L^2} = 
\int_X d(\alpha ) \wedge \star \beta .
\]
Multiplication by $\alpha$ and $d\alpha$ is well defined and linear on distributions. We have that $\star \beta \in A^{\cdot}[W^{2,p}]$. In particular, it is a distribution and one can differentiate it, so we can say 
\[
0 = \int d(\alpha \wedge \star \beta ) = 
\int d(\alpha ) \wedge \star \beta + \int \alpha \wedge d(\star \beta ),
\]
so 
\[ 
\int d(\alpha ) \wedge \star \beta = -  \int \alpha \wedge \star (\star d(\star \beta ))
= (\alpha , d^{\ast} \beta )_{L_2}.
\]
This proves the proposition when the forms have smooth coefficients. 

Now, suppose $\alpha \in A^{k}[L^2,Dom(d)]$ and $\beta \in A^{k+1}[L^2,Dom(d^{\ast})]$. From the above, we may write
\[ 
\alpha = a + \alpha ' + d(\alpha '')
\]
where $a \in \mathcal{H}^k_{bk}$, $\alpha '\in  A^{k}[W^{1,2}]$
and $\alpha'' \in  A^{k-1}[W^{1,2}]$. Similarly we may write
\[
\beta = b + \beta ' + d^{\ast} (\beta '')
\]
where $b\in \star \mathcal{H}^{2n-k-1}_{bk}$, $\beta '\in  A^{k+1}[W^{1,2}]$
and $\beta''\in  A^{k+2}[W^{1,2}]$.
For the last two parts we note that $\star$ is $C^1$ so it preserves the $W^{1,2}$ Sobolev spaces.

Break the adjointness statement into statements for the different pieces of $\alpha$ and $\beta$. We have $d(a + \alpha ' + d(\alpha '') )= d(\alpha ')$ and
$d^{\ast} ( b + \beta ' + d^{\ast} (\beta '')) = d^{\ast}(\beta ')$. We claim that
\[
(d(\alpha '),b)_{L^2} = 0, \;\;\;\; (d(\alpha '),d^{\ast} (\beta ''))_{L^2} = 0
\]
and similarly 
\[
(a,d^{\ast} (\beta '))_{L^2} = 0, \;\;\;\; (d(\alpha ''),d^{\ast} (\beta '))_{L^2} = 0.
\]
For the terms involving $a$ and $b$ note that $a$ and $\star b$ are smooth forms and wedging with them is a linear operation on distributions. The functions 
\[
\alpha ' \mapsto (d(\alpha '),b)_{L^2}  \mbox{ and }
\beta ' \mapsto (a,d^{\ast} (\beta '))_{L^2}
\]
are bounded linear functions on the $W^{1,2}$ spaces. Since smooth forms are dense in $W^{1,2}$ (the Meyers-Serrin theorem) 
and these functions vanish on smooth forms, they vanish. To complete the claim, the two remaining terms are the same: the function 
\[
\eta , \zeta \mapsto (d(\eta ),d^{\ast} (\zeta ''))_{L^2}
\]
is a continuous bilinear form on $A^{k-1}[W^{1,2}] \times 
A^{k+1}[W^{1,2}]$, and it vanishes on pairs of smooth forms. Since the smooth forms are dense, we get $(d(\eta ),d^{\ast} (\zeta ''))_{L^2}=0$ as claimed. 

Using the claims, the only remaining formula that needs to be shown is the statement
of the proposition but for $W^{1,2}$ forms, that is:
\[
(d(\alpha ' ), \beta ' )_{L^2} -  
(\alpha ' ,d^{\ast}( \beta  ' ))_{L^2} =0
\]
for $\alpha  ' \in A^{k}[W^{1,2}]$ and $\beta ' \in A^{k+1}[W^{1,2}]$.
But now the function 
\[
\alpha ', \beta ' \mapsto 
(d(\alpha ' ), \beta ' )_{L^2} -  
(\alpha ' ,d^{\ast}( \beta  ' ))_{L^2}
\]
is a continuous bilinear function on 
$A^{k}[W^{1,2}] \times A^{k+1}[W^{1,2}]$, vanishing on smooth forms, so again by density it vanishes. This completes the proof. 
\end{proof}

\begin{corollary}
\label{laplacian-norm-squared}
Suppose $\alpha \in A^k[W^{2,2}]$. Then 
\[
\| d(\alpha ) \| ^2 _{L^2} + \| d^* (\alpha ) \| ^2 _{L^2} = \int _X (\Delta _d \alpha , \alpha )_g dVol .
\]
In particular, if $\Delta _d(\alpha ) = 0$ then $d(\alpha ) = 0$ and 
$d^{\ast} (\alpha ) = 0$. 
\end{corollary}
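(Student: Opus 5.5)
The plan is to apply Proposition \ref{stokes} twice, once to the pair $(\alpha, d\alpha)$ and once to the pair $(d^{\ast}\alpha, \alpha)$, and then add the two resulting identities.

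First we check that everything is well defined. Here $\alpha \in A^k[W^{2,2}]$, and $g$ is a $W^{2,p}$ metric with $p \gg 0$. By Remark \ref{coeffs}, the operator $d^{\ast}$ has $W^{1,p}\subset C^0$ coefficients, and its coefficient derivatives lie in $L^p$. Hence $d^{\ast}\alpha$ and $d\alpha$ are in $W^{1,2}$. This uses the product $W^{1,p}\cdot W^{1,2}\subset W^{1,2}$, which holds for $p$ large by the Sobolev product theorems the paper has fixed. It follows that $\Delta_d\alpha = dd^{\ast}\alpha + d^{\ast}d\alpha$ lies in $L^2$, so the right-hand side $\int_X(\Delta_d\alpha,\alpha)_g\, dVol$ makes sense. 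It equals $(dd^{\ast}\alpha,\alpha)_{L^2} + (d^{\ast}d\alpha,\alpha)_{L^2}$.

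For the first application, take the $k$-form $\alpha$ and the $(k+1)$-form $\beta = d\alpha$. Since $\alpha\in W^{1,2}$ and $d\alpha \in L^2$, we have $\alpha \in A^k[L^2,Dom(d)]$. Since $d\alpha\in W^{1,2}$ and $d^{\ast}d\alpha\in L^2$, we have $d\alpha \in A^{k+1}[L^2,Dom(d^{\ast})]$. Proposition \ref{stokes} then gives
\[
\|d\alpha\|^2_{L^2} = (d\alpha, d\alpha)_{L^2} = (\alpha, d^{\ast}d\alpha)_{L^2}.
\]
For the second application, take the $(k-1)$-form $d^{\ast}\alpha$, which lies in $W^{1,2}$ with $dd^{\ast}\alpha\in L^2$, so $d^{\ast}\alpha$ is in the domain of $d$. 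Take $\beta = \alpha$, which lies in $W^{1,2}$ with $d^{\ast}\alpha \in L^2$, so $\alpha$ is in the domain of $d^{\ast}$. Proposition \ref{stokes} gives
\[
(dd^{\ast}\alpha, \alpha)_{L^2} = \|d^{\ast}\alpha\|^2_{L^2}.
\]
Adding the two identities and using the symmetry of the real $L^2$ inner product yields the formula. For complex forms the same argument applies with hermitian products.

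For the consequence, if $\Delta_d\alpha = 0$ then the right-hand side vanishes. The left-hand side is a sum of two nonnegative terms, so both vanish, and therefore $d\alpha=0$ and $d^{\ast}\alpha=0$ almost everywhere, hence as $L^2$ forms.

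The only delicate step is the regularity bookkeeping: showing that $d^{\ast}\alpha \in W^{1,2}$ when the metric is only $W^{2,p}$, so that it lies in the domain of $d$ and $dd^{\ast}\alpha\in L^2$. If the product estimate needs more care, the fallback is to verify this in local background coordinates. There $d^{\ast}\alpha$ is written as $\sum c\,\partial\alpha + \sum c'\alpha$ with $c\in W^{1,p}$ and $c'\in L^p$-type coefficients. Each piece is in $W^{1,2}$ because $\partial\alpha\in W^{1,2}$, $c\in C^0$, $\partial c\in L^p$, and $\alpha$, $\partial\alpha$ are in $L^{q}$ for suitable $q$ by Sobolev embedding.
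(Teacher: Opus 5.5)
Your proof is correct and is essentially the paper's argument: apply Proposition \ref{stokes} to the pairs $(\alpha, d\alpha)$ and $(d^{\ast}\alpha,\alpha)$ and add. Your domain bookkeeping, in particular that $d\alpha$ lies in the domain of $d^{\ast}$, is stated more carefully than in the paper. One minor slip in your fallback paragraph: the zeroth-order coefficients of $d^{\ast}$ come from first derivatives of $g\in W^{2,p}$, so they lie in $W^{1,p}$, not merely $L^p$ (with only $L^p$ coefficients the term $c'\alpha$ need not be in $W^{1,2}$). Your main argument via Remark \ref{coeffs} already uses the correct regularity, so nothing is lost.
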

\begin{proof}
We note that $\Delta _d \alpha$ in the distributional sense, is in 
$A^k[L^2]$ so the integral on the right hand side makes sense. We have 
$\alpha \in A^k[L^2,Dom(d)]$, and
$d(\alpha ) \in A^{k+1}[W^{1,2}]\subset A^{k+1}[L^2,Dom(d)]$ and $d^{\ast}(\alpha ) 
\in A^{k-1}[W^{1,2}]\subset A^{k-1}[L^2,Dom(d)]$ so Proposition \ref{stokes} may be applied:
\[
(d^{\ast}d\alpha , \alpha )_{L^2} = (d \alpha , d \alpha )_{L^2}
= \| d\alpha \| ^2_{L^2}
\]
and
\[
(dd^{\ast}\alpha , \alpha )_{L^2} = (d^{\ast} \alpha , d^{\ast} \alpha )_{L^2}
= \| d^{\ast}\alpha \| ^2_{L^2}.
\]
Adding these together gives the required statement. 
\end{proof}

\subsection{Less regular products and the Leibniz formula}
\label{lessreg}

It will also be useful to apply the product differentiation rule to products with a lower level of regularity, including negative Sobolev spaces. We adopt the informal convention that the negative Sobolev space $W^{-1,q}$ consists of distributions that are locally of the form $v=D_iu_i$ for $u_i\in L^q$. Suppose $f\in W^{1,p}$ with $L^p\cdot L^q\subset L^1$. We can then define the product 
$fv$ as a distribution in the following way: if $\varphi$ is a smooth test function then set
\[
\langle fv,\varphi \rangle \coloneqq \langle D_i(fu_i),\varphi \rangle 
- \langle D_i(f)u_i,\varphi \rangle 
\]
where the two terms on the right are defined, for the second one 
because $D_i(f)u_i \in L^p\cdot L^q \subset L^1$, and for the first by putting $D_i$ over onto $v$ and noting that the product is even better defined. 

This definition gives tautologically a Leibniz rule for differentiating products. 

\subsection{The Hodge decomposition}\label{5.4}

Define subspaces of $A^k[L^2]$
\[
Im(d) := d A^{k-1} [L^2,Dom(d)], 
\;\;\;\;\;\;
Im(d^{\ast}) := d^{\ast} A^{k+1} [L^2,Dom(d^{\ast})].
\]

\begin{corollary}
\label{decompcor}
The subspaces $Im(d)$ and $Im(d^{\ast}) $ are closed in $A^k[L^2]$. They have expressions 
\[
Im(d) = d A^{k-1} [W^{1,2}], 
\;\;\;\;\;\;
Im(d^{\ast}) = d^{\ast} A^{k+1} [W^{1,2}].
\]
Their perpendicular subspaces with respect to the inner product $(\; , \; )_{L^2}$ are
\[
Im(d) ^{\perp} = A^k[L^2,\ker (d^{\ast}) ],
\]
\[
Im(d^{\ast}) ^{\perp} = A^k[L^2,\ker (d) ] .
\]
\end{corollary}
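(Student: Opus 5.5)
The plan is to read everything off Lemma \ref{domd}, Lemma \ref{domdstar} and Proposition \ref{stokes}, treating the statements for $d$ and for $d^{\ast}$ in parallel. The second is obtained from the first by conjugating with $\star$, which has $W^{2,p}\subset C^1$ coefficients and therefore preserves $L^2$, $W^{1,2}$, and closedness of subspaces.

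\emph{Expressions for the images.} Let $\alpha \in A^{k-1}[L^2,Dom(d)]$. By Lemma \ref{domd} we write $\alpha = a + d(\alpha'') + \alpha'$ with $a\in\mathcal{H}^{k-1}_{bk}$, $\alpha''\in A^{k-2}[W^{1,2}]$ and $\alpha'\in A^{k-1}[W^{1,2}]$. Then $d\alpha = d\alpha'$, so $Im(d)\subseteq dA^{k-1}[W^{1,2}]$. The reverse inclusion is clear, since $W^{1,2}\subset Dom(d)$. The same argument with Lemma \ref{domdstar} gives $Im(d^{\ast}) = d^{\ast}A^{k+1}[W^{1,2}]$.

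\emph{Closedness.} For $Im(d)$ this is the last sentence of Theorem \ref{classicL2}, since $Im(d)$ depends only on the background structure. For $Im(d^{\ast})$ we use $Im(d^{\ast}) = \star\, d\, \star A^{k+1}[L^2,Dom(d^{\ast})] = \star\, Im(d)$ in the appropriate degree. Here we check that $\star$ maps $A^{k+1}[L^2,Dom(d^{\ast})]$ bijectively onto $A^{n-k-1}[L^2,Dom(d)]$, which holds because $\star^2=\pm 1$ and $\star$ preserves $L^2$. Since $\star$ is an $L^2$-isomorphism (bounded with bounded inverse), it carries closed subspaces to closed subspaces.

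\emph{Perpendicular spaces.} For $Im(d)^{\perp}\supseteq A^k[L^2,\ker(d^{\ast})]$ we apply Proposition \ref{stokes}: for $\beta$ with $d^{\ast}\beta=0$ (so $\beta\in Dom(d^{\ast})$) and $\alpha\in A^{k-1}[W^{1,2}]$, we get $(d\alpha,\beta)_{L^2}=(\alpha,d^{\ast}\beta)_{L^2}=0$.

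For the converse, take $\beta\in A^k[L^2]$ orthogonal to $dA^{k-1}[W^{1,2}]$, in particular orthogonal to $d\varphi$ for all smooth $\varphi$. We must show that the distribution $d^{\ast}\beta=\pm\star d\star\beta$ vanishes. As a distribution, $d(\star\beta)$ pairs with a smooth test form $\psi$ via $\pm\int \star\beta\wedge d\psi$. One rewrites $\int d\varphi\wedge\star\beta = (d\varphi,\beta)_{L^2}$ and chooses $\varphi$ to absorb the $C^1$ factor coming from $\star$. The cleanest route is to use test forms of regularity $W^{1,2}$ rather than smooth ones: for $\psi\in A^{n-k+1}[C^\infty]$, set $\varphi = \pm\star\psi\in A^{k-1}[C^1]\subset A^{k-1}[W^{1,2}]$. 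Then $\langle d\star\beta,\psi\rangle$ is, up to sign, $(d\varphi,\beta)_{L^2}=0$. Hence $d\star\beta=0$, so $d^{\ast}\beta=0$.

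The main obstacle is this bookkeeping with distributions when $\star$ is only $C^1$. It is justified by the less-regular Leibniz convention of \S\ref{lessreg}, together with density of smooth forms in $W^{1,2}$, exactly as in the proof of Proposition \ref{stokes}.

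The remaining identity $Im(d^{\ast})^{\perp}=A^k[L^2,\ker(d)]$ is proved symmetrically. The inclusion $\supseteq$ follows from Proposition \ref{stokes}. For $\subseteq$: if $\alpha\perp d^{\ast}\zeta$ for all $\zeta\in A^{k+1}[W^{1,2}]$, take $\zeta=\star\psi'$ with $\psi'$ smooth. The identity $(\alpha,d^{\ast}\star\psi')_{L^2}=\pm\int\alpha\wedge d\psi'$ then shows that the distribution $d\alpha$ vanishes, so $\alpha\in A^k[L^2,\ker(d)]$.
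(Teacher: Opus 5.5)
Your proposal is correct and follows essentially the same route as the paper: closedness from Theorem \ref{classicL2} together with conjugation by $\star$, the $W^{1,2}$ expressions from Lemmas \ref{domd} and \ref{domdstar}, the inclusions $A^k[L^2,\ker(d^{\ast})]\subseteq Im(d)^{\perp}$ and $A^k[L^2,\ker(d)]\subseteq Im(d^{\ast})^{\perp}$ from Proposition \ref{stokes}, and the reverse inclusions by testing against smooth forms, where your explicit distributional pairing is more careful than the paper's appeal to Proposition \ref{stokes} for a $\beta$ not yet known to lie in $Dom(d^{\ast})$. The detour through $\varphi=\pm\star\psi$ is harmless but unnecessary, since pairing $d(\star\beta)$ with a smooth $(k-1)$-form $\psi$ already gives $\pm(d\psi,\beta)_{L^2}=0$, with no $C^1$ factor to absorb.
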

\begin{proof}
The statement that $Im(d)$ is closed is from Theorem \ref{classicL2}. The statement that 
$Im(d^{\ast}) $ is closed is obtained by taking the image with the Hodge star $\star$. 
The expressions as images of the $W^{1,2}$ subspaces come from Lemmas \ref{domd} and \ref{domdstar}. 

Suppose $\beta \in Im(d)^{\perp}$ in $A^k[L^2]$. Then for 
$\alpha \in A^{k-1}[C^{\infty}]$ we have, using Proposition \ref{stokes},
\[
(\alpha , d^{\ast}(\beta ))_{L^2} = 
(d(\alpha ), \beta )_{L^2} = 0.
\]
As $A^{k-1}[C^{\infty}]\subset A^{k-1}[L^2]$ is dense we get 
$(\alpha , d^{\ast}(\beta ))_{L^2} =0$ for all $\alpha \in A^{k-1}[L^2]$, thus
$d^{\ast}(\beta ) = 0$. This shows that 
\[
Im(d) ^{\perp} \subset  A^k[L^2,\ker (d^{\ast}) ].
\]
In the other direction, suppose $\beta  \in A^k[L^2,\ker (d^{\ast}) ]$. Note that automatically $\beta  \in A^k[L^2,Dom (d^{\ast}) ]$. Then 
for $\alpha \in Im(d)$ we can write $\alpha = d(\alpha ')$ for $\alpha '\in A^{k-1}[L^2,Dom (d)]$. Proposition \ref{stokes} gives

\[
(\alpha , \beta ) _{L^2} = (d(\alpha ') , \beta ) _{L^2} = ( \alpha ' , d^* (\beta ) )_{L^2} = 0.
\]

This shows that 
\[
A^k[L^2,\ker (d^{\ast}) ] \subset Im(d) ^{\perp} .
\]
We conclude that $Im(d) ^{\perp} =  A^k[L^2,\ker (d^{\ast}) ]$.

The proof that $Im(d^{\ast}) ^{\perp} = A^k[L^2,\ker (d) ] $ is similar, using
as necessary the $W^{2,p}$ regularity of $\star$ as we have done above. 
\end{proof}

We now prove the Hodge decomposition theorem in our situation. Recall that the space of harmonic forms for the $W^{2,p}$ metric $g$ is defined by
\[
\mathcal{H}^k_g := A^k[L^2,\ker (d) ]\cap A^k[L^2,\ker (d^{\ast}) ].
\]

\begin{remark}
\label{orrthoclosed}
The orthogonal direct sum of closed subspaces of a Hilbert space  is a closed subspace.
\end{remark}

\begin{theorem}
\label{hodgedecomp}
We have a direct sum decomposition, orthogonal for the inner product 
$(\; , \; )_{L^2}$:

\begin{equation*}
    \begin{split}
        A^k[L^2] &= \mathcal{H}^k_g \oplus Im(d) \oplus Im(d^{\ast})\\
                 &= \mathcal{H}^k_g \oplus d A^{k-1}[W^{1,2}] \oplus d^{\ast} A^{k+1}[W^{1,2}].
    \end{split}
\end{equation*}

The pieces may be regrouped as
\[
A^k[L^2,\ker (d) ] = \mathcal{H}^k_g \oplus Im(d)
\]
and 
\[
A^k[L^2,\ker (d^{\ast}) ] = \mathcal{H}^k_g \oplus Im(d^{\ast}),
\]
and also 
\[
A^k[L^2] = Im(d) \oplus A^k[L^2,\ker (d^{\ast}) ]
= A^k[L^2,\ker (d) ] \oplus Im (d^{\ast}).
\]
\end{theorem}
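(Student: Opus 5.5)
The argument is pure Hilbert-space bookkeeping on top of Corollary \ref{decompcor}. I will work in $A^k[L^2]$ with the inner product $(\;,\;)_{L^2}$ determined by $g$. Since $g$ is continuous and positive definite on compact $X$, this inner product is equivalent to the background one, so $A^k[L^2]$ is a Hilbert space. By Corollary \ref{decompcor}, $Im(d)$ and $Im(d^{\ast})$ are closed subspaces of it. The proof has four steps.

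\textbf{Step 1: $Im(d)$ and $Im(d^{\ast})$ are orthogonal.} Take $d\alpha'$ with $\alpha'\in A^{k-1}[L^2,Dom(d)]$ and $d^{\ast}\beta'$ with $\beta'\in A^{k+1}[L^2,Dom(d^{\ast})]$. We have $d\alpha'\in A^k[L^2,\ker(d)]$, because $d^2=0$ as distributions. By Corollary \ref{decompcor}, $A^k[L^2,\ker(d)]=Im(d^{\ast})^{\perp}$, so $(d\alpha',d^{\ast}\beta')_{L^2}=0$. Alternatively, one can apply Proposition \ref{stokes} directly, using $d^{\ast}d^{\ast}=0$ (from $\star\star=\pm1$ and $d^2=0$) to see that $d^{\ast}\beta'$ lies in $A^k[L^2,Dom(d^{\ast})]$ with $d^{\ast}(d^{\ast}\beta')=0$.

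\textbf{Step 2: the orthogonal complement is $\mathcal{H}^k_g$.} By Remark \ref{orrthoclosed}, $Im(d)\oplus Im(d^{\ast})$ is a closed subspace. Its orthogonal complement is
\[
Im(d)^{\perp}\cap Im(d^{\ast})^{\perp}=A^k[L^2,\ker(d^{\ast})]\cap A^k[L^2,\ker(d)]=\mathcal{H}^k_g ,
\]
again by Corollary \ref{decompcor} and the definition of $\mathcal{H}^k_g$. The projection theorem in the Hilbert space $A^k[L^2]$ then gives
\[
A^k[L^2]=\mathcal{H}^k_g\oplus Im(d)\oplus Im(d^{\ast}),
\]
with all three summands mutually orthogonal. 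The second form of the decomposition, with $W^{1,2}$ forms, follows from the expressions $Im(d)=dA^{k-1}[W^{1,2}]$ and $Im(d^{\ast})=d^{\ast}A^{k+1}[W^{1,2}]$ in Corollary \ref{decompcor}.

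\textbf{Step 3: the regroupings.} Intersect the three-term decomposition with $A^k[L^2,\ker(d)]=Im(d^{\ast})^{\perp}$. This subspace contains $\mathcal{H}^k_g$ and $Im(d)$ (Step 1), and it is orthogonal to $Im(d^{\ast})$. So if we write $\gamma\in A^k[L^2,\ker(d)]$ as $h+d\alpha+d^{\ast}\beta$, then pairing with $d^{\ast}\beta$ gives $\|d^{\ast}\beta\|^2=0$. Hence
\[
A^k[L^2,\ker(d)]=\mathcal{H}^k_g\oplus Im(d).
\]
The symmetric argument gives $A^k[L^2,\ker(d^{\ast})]=\mathcal{H}^k_g\oplus Im(d^{\ast})$. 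Substituting these two identities into the three-term decomposition yields the last two-term expressions, which are also just $V=W\oplus W^{\perp}$ for the closed subspaces $W=Im(d)$ and $W=Im(d^{\ast})$.

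\textbf{Expected obstacle.} The only delicate point is Step 1, namely checking that the compositions make sense at this low regularity: that $d(d^{\ast}\beta')$ and $d^{\ast}(d^{\ast}\beta')$ are defined as distributions and vanish when $\star$ has only $W^{2,p}$ coefficients. I would handle this by routing through Corollary \ref{decompcor}, as in the first version of Step 1, since that corollary already absorbs these issues. Everything else is formal.
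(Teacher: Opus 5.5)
Your proposal is correct and takes essentially the same route as the paper. Both arguments rest on Corollary \ref{decompcor}, which gives the closedness of $Im(d)$ and $Im(d^{\ast})$ and identifies their orthogonal complements. The paper's Hahn--Banach/Riesz step, showing that a vector orthogonal to all three summands lies in $\mathcal{H}^k_g$ and so vanishes, is your projection-theorem step phrased by contradiction. Your explicit checks of the orthogonality $Im(d)\perp Im(d^{\ast})$ and of the regroupings fill in what the paper leaves as ``we know'' and ``follow in a similar way''.
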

\begin{proof}
By definition $\mathcal{H}^k_g$ is a closed subspace contained in 
$A^k[L^2,\ker (d) ]$. We remark in passing that it is finite dimensional, 
see the corollary below. 

By Remark \ref{orrthoclosed} applied twice, the subspace
\[
\mathcal{H}^k_g \oplus Im(d) \oplus Im(d^{\ast}) \subset A^k[L^2] 
\]
is closed. The inclusion map from the  orthogonal direct sum into 
$A^k[L^2] $ is injective since we know that the subspaces are orthogonal. 
By the Hahn-Banach and Riesz representation theorems, if this subspace 
were not equal to $A^k[L^2]$ then there would be a vector 
$\eta \in A^k[L^2]$ perpendicular to all three subspaces. In particular, 
$\eta \in Im(d)^{\perp}$ but 
$Im(d) ^{\perp} = A^k[L^2,\ker (d^{\ast}) ]$. Similarly, 
$\eta \in Im(d^{\ast})^{\perp}$ but 
$Im(d^{\ast}) ^{\perp} = A^k[L^2,\ker (d) ]$. Thus, 
\[
\eta \in 
A^k[L^2,\ker (d^{\ast}) ] \cap  A^k[L^2,\ker (d) ]
= \mathcal{H}^k_g;
\]
but by hypothesis, $\eta$ is perpendicular to $\mathcal{H}^k_g$. Thus, $\eta = 0$. 
This shows the first statement that $A^k[L^2]$ equals $\mathcal{H}^k_g \oplus Im(d) \oplus Im(d^{\ast})$. The other statements follow in a similar way. 
\end{proof}

\begin{corollary}\label{Hodge_thm}
The map 
\[ 
\mathcal{H}^k_g \rightarrow 
\frac{
A^k[L^2,\ker (d) ]
}{
Im(d)
}
\cong H^k_{DR}(X)
\]
is an isomorphism. 
\end{corollary}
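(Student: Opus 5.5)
The map is the natural one: send a harmonic form $h \in \mathcal{H}^k_g$, which by definition lies in $A^k[L^2,\ker (d)]$, to its class modulo $Im(d)$. The identification of the quotient $A^k[L^2,\ker (d)]/Im(d)$ with $H^k_{DR}(X)$ is Theorem \ref{classicL2}. That theorem concerns only the smooth background structure and the metric-independent operator $d$. Indeed $A^k[L^2,\ker(d)]$ consists of the $k$-cocycles of the complex $A^{\cdot}[L^2,Dom(d)]$, and $Im(d)$ is exactly its space of $k$-coboundaries. So it remains to show that the map $\mathcal{H}^k_g \to A^k[L^2,\ker (d)]/Im(d)$ is a linear bijection.

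For this I would use the regrouped orthogonal decomposition from Theorem \ref{hodgedecomp},
\[
A^k[L^2,\ker (d) ] = \mathcal{H}^k_g \oplus Im(d).
\]
\textbf{Surjectivity.} Any $\alpha \in A^k[L^2,\ker(d)]$ can be written as $\alpha = h + d\beta$ with $h\in\mathcal{H}^k_g$. Then $h$ and $\alpha$ have the same class in the quotient.

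\textbf{Injectivity.} If $h \in \mathcal{H}^k_g$ maps to zero, then $h\in Im(d)$. Since the sum is orthogonal, $\mathcal{H}^k_g\cap Im(d)=0$, so $h=0$. Concretely, $h$ lies in $A^k[L^2,\ker(d^{\ast})] = Im(d)^{\perp}$ by Corollary \ref{decompcor}, so $(h,h)_{L^2}=0$.

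A linear bijection onto the quotient, composed with the isomorphism of Theorem \ref{classicL2}, gives the claimed isomorphism. As a consequence, $\mathcal{H}^k_g$ is finite dimensional with dimension $b_k(X)$; this is the remark made in passing in the proof of Theorem \ref{hodgedecomp}.

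I expect no serious obstacle here, because all the analysis is already contained in the closedness of $Im(d)$ and in the orthogonality statements of Corollary \ref{decompcor}, which rest on the low-regularity Stokes formula, Proposition \ref{stokes}. The one point to check carefully is that the complex used in Theorem \ref{classicL2} has the same cocycles and coboundaries as the spaces appearing in Theorem \ref{hodgedecomp}. It does: the cocycles are $A^k[L^2,\ker(d)]$, and the coboundaries are $d A^{k-1}[L^2,Dom(d)] = Im(d)$. The dependence on the $W^{2,p}$ metric $g$ enters only through the choice of the complement $\mathcal{H}^k_g$, not through the quotient itself.
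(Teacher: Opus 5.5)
Your proposal is correct and follows the paper's proof essentially verbatim. Injectivity comes from $\mathcal{H}^k_g \subset A^k[L^2,\ker(d^{\ast})] = Im(d)^{\perp}$ (Corollary \ref{decompcor}), surjectivity from the regrouped decomposition $A^k[L^2,\ker(d)] = \mathcal{H}^k_g \oplus Im(d)$ of Theorem \ref{hodgedecomp}, and finite-dimensionality follows as a byproduct via Theorem \ref{classicL2}.
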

\begin{proof}
One can prove finite-dimensionality of the space of harmonic forms from before the proof of the previous theorem, as was mentioned above. Notice that $\mathcal{H}^k_g $  is also contained in 
$A^k[L^2,\ker (d^{\ast}) ]$, it is perpendicular to 
$Im (d)$ by Corollary \ref{decompcor}. In particular the map 
\[ 
\mathcal{H}^k_g \rightarrow 
\frac{
A^k[L^2,\ker (d) ]
}{
Im(d)
}
\]
is injective, so $\mathcal{H}^k_g$ is finite-dimensional since 
the $L^2$ cohomology (c.f.\ Theorem \ref{classicL2})
identifies the quotient $A^k[L^2,\ker (d) ]/Im(d)$ with the
usual finite dimensional 
De Rham cohomology of the compact smooth manifold $X$.

Using the direct sum decomposition formulas of the previous theorem gives surjectivity of the map. 
\end{proof}

\section{Regularity of harmonic forms}\label{6}

We would like to study the regularity properties of the harmonic forms 
$a\in \mathcal{H}^k_g$. The theory of Gilbarg-Trudinger 
\cite[Chapter 8]{GilbargTrudinger} on regularity of solutions of second order elliptic differential equations with non-smooth coefficients provides an important input. However, it requires the solutions to be measurably differentiable functions.

\subsection{Regularity for second order elliptic equations}\label{6.1}
The classical reference is Gilbarg-Trudinger \cite{GilbargTrudinger}, and we use Beck 
\cite{Beck} for the case of vector-valued equations. We record here a version of their statement that will be most useful for our scenario.

\begin{lemma}
\label{gt}
Suppose $L$ is a second order differential operator on a vector bundle $V$ 
(assume with a frame) over an
 open subset of $\mathbb{R} ^{n}$ of the form
\[
L u = a_{ij} D_i D_j u + b_i D_i u + c u
\]
where $a_{ij}$, $b_i$ an $c$ are matrix-valued functions (the matrices corresponding to the action on $V$), such that for some $p\gg 0$, 
\[
a_{ij} \in W^{2,p}, \;\;\;\; 
b_i \in W^{1,p}, \;\;\;\; 
c \in L^p .
\]
Suppose the matrix of leading coefficients $a_{ij}$ strictly satisfies the ellipticity condition. 
If $v\in L^2$ and $u\in W^{1,2}$ is a solution of the inhomogeneous equation
$Lu=v$, in the sense of distributions, 
then $u\in W^{2,2}$ after shrinking to a smaller open set. 
\end{lemma}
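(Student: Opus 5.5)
Since $u \in W^{1,2}$ is already given, the plan is to put $L$ into divergence form and invoke the $W^{2,2}$ interior regularity of Gilbarg--Trudinger \cite[Theorem 8.8]{GilbargTrudinger}, in the vector-valued version of Beck \cite{Beck}. Note that $a_{ij}\in W^{2,p}\subset C^{1,\alpha}$, so the leading coefficients are Lipschitz. We write
\[
a_{ij} D_i D_j u = D_i\big(a_{ij} D_j u\big) - (D_i a_{ij}) D_j u .
\]
By the Leibniz rule of Subsection \ref{lessreg}, this identity holds in the sense of distributions for $u\in W^{1,2}$. The equation $Lu=v$ then becomes
\[
D_i\big(a_{ij} D_j u\big) = v - \big(b_j - D_i a_{ij}\big) D_j u - c u =: f .
\]
Here $D_i a_{ij}\in W^{1,p}\subset C^0$ and $b_j\in W^{1,p}\subset C^0$, so $(b_j - D_i a_{ij})D_j u\in L^2$. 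For $cu$ with $c\in L^p$ and $u\in W^{1,2}\subset L^{2n/(n-2)}$ (or any $L^q$ when $n\le 2$), Hölder gives $cu\in L^r$ with $1/r = 1/p + (n-2)/(2n)$. For $p\ge n$ this only yields $r\le 2$, and equality in general requires $p\ge n$. We choose $p\gg 0$ as the paper allows, so $cu\in L^2$ at least locally. Hence $f\in L^2$ on the open set.

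We may assume the matrix $a_{ij}$ is symmetric in $i,j$ by symmetrizing, and that it is strongly elliptic, either in the Legendre--Hadamard sense or, in our intended application, of the form $g^{ij}\otimes \mathrm{Id}_V$. Then the divergence-form system $D_i(a_{ij}D_j u)=f$ with Lipschitz coefficients and $f\in L^2$ is handled by the standard difference-quotient argument. Take a cutoff $\eta$, test the equation against $-\Delta^{-h}_k(\eta^2\Delta^h_k u)$, and use Gårding's inequality on the compactly supported function $\eta\Delta^h_k u$. This needs only continuity of $a_{ij}$, via a small-support freezing argument, or the Legendre condition directly. The Lipschitz bound on $a_{ij}$ controls the commutator terms $(\Delta^h_k a_{ij})D_j u$ in $L^2$. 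Together these give a uniform bound on $\|\Delta^h_k D u\|_{L^2}$ on a smaller set, hence $Du\in W^{1,2}$ there, i.e. $u\in W^{2,2}$ after shrinking.

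The main obstacle is the vector-valued ellipticity. For systems, Gårding's inequality with merely continuous coefficients needs Legendre--Hadamard ellipticity and a localization to small balls, where we freeze coefficients and absorb the error using the modulus of continuity. This is exactly the content we would cite from Beck. In our application $a_{ij}=g^{ij}\mathrm{Id}$, and in that case the scalar argument applies componentwise-coercively without difficulty. The secondary point, the integrability of $cu$, is resolved by taking $p$ large, consistent with the paper's standing convention.
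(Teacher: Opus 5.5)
Your proposal is correct and follows essentially the same route as the paper. You put $L$ in divergence form with leading coefficients $a_{ij}\in W^{2,p}\subset C^{1,\alpha}$, check that every lower-order contribution lies in $L^2$ (including $cu$, via the Sobolev embedding of $W^{1,2}$ and $p\gg 0$), and invoke Gilbarg--Trudinger Theorem 8.8 together with Beck for systems. The only difference is bookkeeping: you keep $(b_j-D_ia_{ij})D_ju$ as a $C^0\cdot L^2$ term on the right-hand side, whereas the paper moves $(b-\mathrm{div}A)u\in W^{1,2}$ into a divergence term. One small slip is in the H\"older step. With $1/r=1/p+1/2-1/n$, taking $p\ge n$ gives $r\ge 2$, not $r\le 2$, and $r\ge 2$ is exactly what makes $cu\in L^2$ locally.
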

\begin{proof}
Note that $A = (a_{ij})\in C^{1,\alpha}$ and $b_i \in C^{0,\alpha}$ by the Sobolev embeddings for $p$ big enough and corresponding $\alpha \in (0,1)$. 
For any $\varepsilon >0$ with $(n-2)\varepsilon < 2$, 
we have $W^{1,2} \subset L^{2+\varepsilon}$. Then for 
$p$ big enough we have the multiplication $L^p \cdot L ^{2+\varepsilon}$.
Thus, if $u\in W^{1,2}$ it gives $cu \in L^2$. 
Note similarly that  $b_iu\in W^{1,2}$, 
indeed $ (\partial _jb_i)u\in L^2$ as above, and 
$b_i\in C^0$ so $b_i(\partial _j u) \in L^2$. Similarly $div(A)u \in W^{1,2}$.

Write the equation in divergence form 
\cite[Equation (8.1)]{GilbargTrudinger} \cite[Equation (4.2)]{Beck}
\[
div(A D u) = div (f) + g 
\]
where $A=(a_{ij})$ and $f = (b-div(A))u$ with $g$ containing the
inhomogeneous part $v$,  the piece $cu$ and the remaining terms coming from the transformation to divergence form. The desired regularity $u\in W^{2,2}$ is given by 
\cite[Theorem 8.8]{GilbargTrudinger}, although that concerns the scalar case. 
For the vector valued case see Beck \cite[Theorem 4.9]{Beck} (taking $k=1$)  which shows that at our level of regularity (with $A\in W^{2,p}\subset C^{1,\alpha}$, $f\in W^{1,2}$ and $g\in L^2$) we don't run into the counterexamples of De Giorgi \cite{DeGiorgi}. 
\end{proof}

\begin{proposition}
\label{gt-laplacian}
For a Riemannian manifold with $W^{2,p}$ metric $g$:
\begin{enumerate}

\item the Laplacian operator $\Delta _d = dd^{\ast} + d^{\ast}d$ has the form 
of \cite[(8.1)]{GilbargTrudinger}  as well as the form required for entering into Lemma \ref{gt}; 

\item for $\alpha \in A^k[W^{1,2}]$, then 
$\Delta _d\alpha \in A^k[W^{-1,2}]$ is a distribution that may be computed in terms of the expression with $\star$ and $d$; 

\item  if $\alpha \in A^k[W^{1,2}]$
is a weak solution of an inomogeneous equation $\Delta _d \alpha = \eta$ with $\eta \in A^k[L^2]$, then $\alpha \in A^k[W^{2,2}]$.
\end{enumerate}
\end{proposition}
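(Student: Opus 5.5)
We work in a background smooth coordinate chart $U$ and write everything in components of $k$-forms with respect to the coordinate coframe $dx^{I}$. This trivializes the bundle $\bigwedge^k T^*_X$ over $U$, so a $k$-form becomes a vector-valued function $u = (\alpha_I)$.

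For part (1), we expand $\Delta_d = -d\star d\star - \star d\star d$ (up to sign conventions) using the Weitzenb{\"o}ck-type coordinate formulas of Section \ref{1.1}. In coordinates, $\nabla$ and $d^{\ast}$ are built from $g_{ij}$, $g^{ij}$, $\sqrt{\det g}$ and the Christoffel symbols $\Gamma \sim g^{-1}\partial g$. Since $g\in W^{2,p}$, and $W^{2,p}$ and $W^{1,p}$ are algebras for $p\gg 0$ that are stable under inversion of nonvanishing functions (Remark \ref{coeffs}), we get $g^{ij}\in W^{2,p}$, $\Gamma\in W^{1,p}$ and $\partial\Gamma\in L^p$. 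Collecting terms gives
\[
\Delta_d u = -g^{ij}\partial_i\partial_j u + b_i\partial_i u + c\,u .
\]
Here the leading coefficient is the scalar matrix $a_{ij}=-g^{ij}\operatorname{Id}\in W^{2,p}$. The first-order coefficients $b_i$ are linear in $\Gamma$ with $W^{2,p}$ factors, hence lie in $W^{1,p}$. The zeroth-order coefficient $c$ involves $\partial\Gamma$ and $\Gamma\cdot\Gamma$, hence lies in $L^p$. Because the leading part is $g^{ij}\operatorname{Id}$, strict ellipticity follows from the positive definiteness of $g$, which is uniform on compact subsets since $g$ is continuous. The divergence form \cite[(8.1)]{GilbargTrudinger} is obtained by rewriting $g^{ij}\partial_i\partial_j u=\partial_i(g^{ij}\partial_j u)-(\partial_i g^{ij})\partial_j u$, where the correction term has a $W^{1,p}$ coefficient.

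For part (2), take $\alpha\in A^k[W^{1,2}]$. Then $d\alpha\in L^2$, and $\star$ has $W^{2,p}\subset C^1$ coefficients, so $\star d\alpha\in L^2$. Applying $d$ lands in $W^{-1,2}$, and multiplying by $\star$ preserves $W^{-1,2}$. Similarly $\star\alpha\in W^{1,2}$, so $d\star\alpha\in L^2$ and $d^{\ast}\alpha\in L^2$, which gives $dd^{\ast}\alpha\in W^{-1,2}$. We must also check that this distribution agrees with the coordinate expression from (1) applied to $\alpha$. For this we use the less-regular Leibniz rule of Subsection \ref{lessreg}: each time a derivative passes through a coefficient of regularity $W^{1,p}$ or $C^1$, the product rule holds distributionally. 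The identity holds for smooth $\alpha$ by (1), and both sides are continuous maps $W^{1,2}\to W^{-1,2}$. For the right side, the lower-order terms are controlled by $L^p\cdot L^{2+\varepsilon}$ multiplication, exactly as in the proof of Lemma \ref{gt}. Density of smooth forms (Meyers–Serrin) then extends the identity to all $\alpha\in A^k[W^{1,2}]$.

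For part (3), by (2) the equation $\Delta_d\alpha=\eta$ in the sense of distributions is the equation $Lu=v$ with $v=\eta\in L^2$ and $u\in W^{1,2}$. Lemma \ref{gt} gives $u\in W^{2,2}$ on a smaller open set. Covering the compact $X$ by finitely many shrunken charts yields $\alpha\in A^k[W^{2,2}]$.

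The main obstacle is part (2): justifying that the distributional composite $\star d\star d$ on $W^{1,2}$ forms really equals the nondivergence coordinate expression. This requires careful bookkeeping of which products are defined, especially the $c\,u$ term with $c\in L^p$ only and the handling of $W^{-1,2}$ times $C^1$. The density argument is what I would rely on to settle it.
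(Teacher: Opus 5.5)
Your proposal is correct and follows essentially the same route as the paper. Both work in background coordinates, exhibit $\Delta_d$ as a second-order operator with $W^{2,p}$, $W^{1,p}$ and $L^p$ coefficients, pass to divergence form, interpret $\star d \star d\alpha$ as a $W^{-1,2}$ distribution, and then invoke Lemma \ref{gt}. The differences are only bookkeeping: the paper factors $d=m_iD_i$, $\star=s$ and defines the products $f\cdot D_ig_i$ directly by duality against test functions. You instead expand through Christoffel symbols, which has the small advantage of displaying the scalar principal part $g^{ij}\operatorname{Id}$, and hence ellipticity, explicitly, a point the paper leaves implicit. You then identify the distribution by continuity on $W^{1,2}$ plus density of smooth forms.
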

\begin{proof}
Use local coordinates for the smooth background structure, and write $d$ as an operator 
$d=m_iD_i$ (summing on repeated indices) where, in this case, there is no zeroth order term and $m_i$ is a matrix operator with constant coefficients on 
$\bigoplus \bigwedge ^k T^{\ast}X$ (since we use the $dx_j$ and $dx_I$ for multi-indices $I$ as frames). 
Similarly write $\star = s$ as a matrix operator with $W^{2,p}$ coefficients. 
Then formally
\[
d^{\ast}d \alpha = -s m_iD_i (s m_jD_j( \alpha )) .
\]
We may transform this into the following ``divergence form'' 
\cite[(8.1)]{GilbargTrudinger} 
\[
d^{\ast}d \alpha = - D_i (sm_i sm_jD_j (\alpha ) )
\]
\[
+ (D_i(sm_i)\cdot sm_j)D_j (\alpha ).
\]
This may also be transformed into the form stated in Lemma \ref{gt}. 

For $\alpha \in A^k[W^{1,2}]$, the statement that $d^{\ast}d\alpha =\beta$ as a distribution means that for a test function $v$ (compactly supported in the coordinate neighborhood), 
\[ 
\langle \beta , v\rangle = 
\langle (sm_i sm_jD_j \alpha ) ,D_i v \rangle 
+ \langle \beta ',v\rangle 
\]
where $\beta ':= (D_j(sm_j)\cdot sm_i )D_i \alpha$.

We note that $\star d \alpha \in L^2$ so $d(\star d \alpha ) \in W^{-1,2}$ is a first derivative of an $L^2$ function. If $f\in W^{2,p}$ and $g_i\in L^2$ then for a smooth compactly supported test function, $D_i(fv) \in W^{1,p}\subset C^0$ can be paired with $g_i\in L^2$, so proceeding as in Section \ref{lessreg}
we can define the distribution $f\cdot (D_i g_i)$ by 
\[
\langle f\cdot (D_i g_i), v\rangle := - \langle g_i, D_i(fv)\rangle . 
\] 
Applied to the coefficients, this allows us to define the distribution $\star d(\star d \alpha )$. This pairs on smooth test functions in the same way as defined previously, noting that since $m_i$ are constant and $s$ is in $W^{2,p}$, the rules for differentiation of products necessary for that computation hold. 

This gives (1) and (2) for the term $d^{\ast}d$ and the term 
$dd^{\ast}$ is treated similarly. 

For (3), apply the theory of \cite[Chapter 8]{GilbargTrudinger} \cite{Beck} expressed in Lemma \ref{gt}, noting that a weak (i.e.\ distributional) solution is a weak solution in the sense of \cite{GilbargTrudinger}. We get the required regularity. 
\end{proof}

\subsection{The case of harmonic forms}\label{6.2}

Recall that the definition of the space of harmonic forms is via the conditions $d(\alpha ) = 0$ and $d^{\ast}(\alpha ) = 0$. 

Suppose $\alpha \in A^k[L^2, Dom(d)] \cap A^k[L^2, Dom(d^{\ast})]$. Recall this means there are forms $\beta \in A^{k+1}[L^2]$ and $\beta ' \in A^{k-1}[L^2]$ 
such that, in the sense of distributions on the smooth background manifold, $d(\alpha )=\beta$ and $d(\star \alpha ) = \star \beta '$. With this understanding of what it means distributionally, the second condition may be written as $d^{\ast}(\alpha ) = -\beta '$.

\begin{proposition}
\label{reg12}
If $\alpha \in A^k[L^2, Dom(d)] \cap A^k[L^2, Dom(d^{\ast})]$ is harmonic,
then $\alpha \in A^k[W^{1,2}]$. In particular, the coefficients of $\alpha$ are functions with measurable derivative.
\end{proposition}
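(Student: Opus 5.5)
The plan is to use the decompositions of Lemmas \ref{domd} and \ref{domdstar} together with the Hodge decomposition for the smooth background metric. The goal is to write $\alpha$ as a $W^{1,2}$ form plus pieces that are then shown to be $W^{1,2}$ by an elliptic regularity argument for the first order operator $d+d^{\ast}$, which has $W^{2,p}\subset C^1$ leading coefficients.

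The first step is to apply Lemma \ref{domd}. Since $d(\alpha)=0$ and $\alpha$ is harmonic, $\alpha\in A^k[L^2,\ker(d)]$, so
\[
\alpha = a + d(\gamma),
\]
with $a\in\mathcal{H}^k_{bk}$ smooth and $\gamma\in A^{k-1}[W^{1,2}]$. The background Green's operator lets us choose $\gamma$ with $d^{\ast}_{bk}\gamma=0$, namely $\gamma=d^{\ast}_{bk}G(\alpha)$. It then suffices to show that $d\gamma\in A^k[W^{1,2}]$, or better, that $\gamma\in A^{k-1}[W^{2,2}]$.

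The second step imposes the condition $d^{\ast}(\alpha)=0$ for the $W^{2,p}$ metric $g$. This gives
\[
d^{\ast}d\gamma = -d^{\ast}a,
\]
where $d^{\ast}a=-\star d\star a$ lies in $A^{k-1}[W^{1,p}]\subset A^{k-1}[L^2]$, because $\star$ has $W^{2,p}$ coefficients and $a$ is smooth. Add the gauge term: set $\Delta'\gamma := d^{\ast}d\gamma + d\,d^{\ast}_{bk}\gamma$. Its principal symbol is that of the background Hodge Laplacian modified by the $g$-dependent part, which is still elliptic. The operator $d^{\ast}_{bk}$ has smooth coefficients, and $d^{\ast}_{bk}\gamma=0$. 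Hence $\gamma\in A^{k-1}[W^{1,2}]$ is a weak solution of
\[
\Delta'\gamma = -d^{\ast}a \in A^{k-1}[L^2].
\]
Following the computation of Proposition \ref{gt-laplacian}(1)--(2), $\Delta'$ can be written in divergence form with leading coefficients in $W^{2,p}$, first order coefficients in $W^{1,p}$, and no worse zeroth order terms. Lemma \ref{gt} therefore applies locally in background charts. Covering $X$ by finitely many charts gives $\gamma\in A^{k-1}[W^{2,2}]$, hence $d\gamma\in A^k[W^{1,2}]$ and $\alpha\in A^k[W^{1,2}]$.

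The main obstacle is checking ellipticity of the mixed operator $\Delta'$, since it combines $d^{\ast}$ for $g$ with $d^{\ast}_{bk}$ for the background metric. Its principal symbol is $\xi\wedge(\iota^g_{\xi}\cdot) + \xi\wedge(\iota^{bk}_{\xi}\cdot)$ restricted appropriately, and this need not be obviously invertible.

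If this fails, the fallback is to use a gauge adapted to $g$ instead. By Theorem \ref{hodgedecomp} applied on $(k-1)$-forms, write $\gamma$ in the $g$-Hodge decomposition and replace it by its component in $Im(d^{\ast})$. This component lies in $d^{\ast}A^{k}[W^{1,2}]$, and $d^{\ast}$ of it vanishes. The equation then becomes $\Delta_d\gamma=-d^{\ast}a$ for the genuine $g$-Laplacian, and Proposition \ref{gt-laplacian}(3) applies directly. The one point needing care is that the new $\gamma$ is still $W^{1,2}$; this follows from Lemma \ref{domdstar} combined with the $C^1$ regularity of $\star$. Alternatively, one can argue symmetrically by conjugating with $\star$ to use $d(\star\alpha)=0$, and conclude the $W^{1,2}$ regularity of the complementary piece the same way.
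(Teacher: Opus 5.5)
As written, neither of your two routes is closed.

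\textbf{The fallback is circular.} It breaks exactly at the step you flag as needing care. The $Im(d^{\ast})$-component $\tilde\gamma=d^{\ast}\gamma_2$, with $\gamma_2\in A^{k}[W^{1,2}]$, is a priori only in $L^2$. Lemma \ref{domdstar} and the $C^1$ regularity of $\star$ tell you no more than $\tilde\gamma\in A^{k-1}[L^2,\ker(d^{\ast})]$. What you actually need is that a co-closed $(k-1)$-form with $d\tilde\gamma\in L^2$ lies in $W^{1,2}$ (equivalently, after applying $\star$, that a closed form in $Dom(d^{\ast})$ does). That is a statement of the same kind as the proposition itself. Without it, Proposition \ref{gt-laplacian}(3), which requires a $W^{1,2}$ weak solution, cannot be invoked.

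This fallback is essentially the paper's route: a smooth representative, a primitive gauge-fixed by $d^{\ast}\zeta=0$ via Theorem \ref{hodgedecomp}, then Proposition \ref{gt-laplacian}(3). The paper supplies $\zeta\in W^{1,2}$ by induction on $k$. Since $\zeta$ is co-closed but not closed, that induction really has to run on a stronger statement than the one for harmonic forms, for instance for closed forms in $Dom(d^{\ast})$. Lemma \ref{domd} then splits a co-closed form in $Dom(d)$ into a closed form in $Dom(d^{\ast})$ plus a $W^{1,2}$ form.

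\textbf{The main route leaves ellipticity open.} You never verify that $\Delta'$ is elliptic, and the symbol you write is wrong. The $d^{\ast}d$ term contributes $\iota^g_\xi(\xi\wedge\cdot)$, not $\xi\wedge\iota^g_\xi$. With your formula the image would lie in $\xi\wedge\Lambda$, so the symbol would not be invertible.

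\textbf{The main route can be completed, giving an induction-free proof.} Take a coframe $(\xi,e^2,\dots,e^n)$ and write $\beta=\beta_0+\xi\wedge\beta_1$, with $\beta_0,\beta_1$ in the exterior algebra of $e^2,\dots,e^n$. Let $v,w$ be the vectors dual to $\xi$ for $g$ and for the background metric. Then
\[
\iota^{g}_{\xi}(\xi\wedge\beta)+\xi\wedge\iota^{bk}_{\xi}\beta=|\xi|_g^{2}\,\beta_0+\xi\wedge\big(|\xi|_{bk}^{2}\,\beta_1+\iota_{w-v}\beta_0\big).
\]
This is triangular with diagonal entries $|\xi|_g^2,|\xi|_{bk}^2>0$, hence invertible.

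For the strong (Legendre--Hadamard) ellipticity under which the Gilbarg--Trudinger/Beck input of Lemma \ref{gt} is used, choose the smooth background metric $C^0$-close to $g$. This is allowed, since the background metric in Lemma \ref{domd} is arbitrary and $g$ is continuous. Then the off-diagonal term $\iota_{w-v}$ is small and the symbol is uniformly positive. Your gauge $\gamma=d^{\ast}_{bk}G(\alpha)$ is in $W^{1,2}$ for free from the background Green's operator. So Lemma \ref{gt} gives $\gamma\in A^{k-1}[W^{2,2}]$ directly, hence $\alpha=a+d\gamma\in A^k[W^{1,2}]$.

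\textbf{Comparison with the paper.} The paper works with the genuine $g$-Laplacian, to which Proposition \ref{gt-laplacian} applies verbatim, and pays for it with an induction on degree. Your route trades both for a mixed-metric operator and a short symbol computation.
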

\begin{proof}
The proof is by induction on $k$. Notice that when $k=0$, $\alpha$ is a function and 
$d(\alpha )$ contains all the partial derivatives of $\alpha$ so the conclusion is automatic. 

Suppose $k\geq 1$ and suppose we know the proposition for smaller values of $k$.  
By the $L^2$ cohomology theory, $\alpha$ represents a class in $H^k_{DR}(X)$. 
This class has a smooth representative 
$\eta\in a^k[C^{\infty}]$, so $\alpha - \eta \in Im(d)$, that is to say there is $\zeta \in A^{k-1}[L^2,Dom(d)]$ such that $d(\zeta ) = \alpha - \eta$. 

We may assume that $d^{\ast}(\zeta ) = 0$. Indeed if $\zeta _1$ is a first choice in $A^{k-1}[L^2,Dom(d)]$ with $d(\zeta _1) = \alpha - \eta$
then we may decompose using Theorem \ref{hodgedecomp} as 
\[ 
\zeta _1 = \zeta _0 + \zeta 
\]
with $\zeta _0 \in Im(d)$ and $\zeta \in \ker (d^{\ast})$. Now $d(\zeta ) = \alpha - \eta$ and $d^{\ast}\zeta = 0$. 

By the inductive statement of the proposition which holds for $k-1$, we get $\zeta \in A^{k-1}[W^{1,2}]$. Now write
\[
\Delta _d(\zeta ) = (dd^{\ast} + d^{\ast} d) \eta = d^{\ast} ( \alpha - \eta ) 
\]
giving 
\[
\Delta _d(\zeta ) = \beta '' := \beta ' - d^{\ast}(\eta ) \in A^{k-1}[L^2].
\]
Thus $\zeta$ is a $W^{1,2}$ distributional solution of 
the inhomogeneous equation $\Delta _d(\zeta ) = \beta ''$. 
The regularity statement of Proposition \ref{gt-laplacian} now gives $\zeta \in A^{k-1}[W^{2,2}]$. 
Thus, $\alpha - \eta = d(\zeta ) \in A^k[W^{1,2}]$, and since $\eta$ is smooth we get 
$\alpha \in A^k[W^{1,2}]$ as required. 
\end{proof}

This regularity serves the following purpose. 

\begin{corollary}\label{reg2}
If $\alpha \in A^k[W^{1,2}]$ and $\Delta_d (\alpha ) = 0$ distributionally, then 
$\alpha \in \mathcal{H}^k_g$.
\end{corollary}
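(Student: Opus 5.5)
The plan is to upgrade $\alpha$ from $W^{1,2}$ to $W^{2,2}$ using elliptic regularity, and then apply the integration-by-parts identity of Corollary \ref{laplacian-norm-squared}.

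\emph{Step 1: regularity.} Since $\alpha \in A^k[W^{1,2}]$, Proposition \ref{gt-laplacian}(2) says that $\Delta_d\alpha$ is a well-defined distribution in $A^k[W^{-1,2}]$. Here it equals $0$, which lies in $A^k[L^2]$. So $\alpha$ is a $W^{1,2}$ weak solution of $\Delta_d\alpha=\eta$ with $\eta=0$. Part (3) of that proposition, resting on Lemma \ref{gt}, then gives $\alpha\in A^k[W^{2,2}]$. This is local, on shrunken coordinate neighborhoods, but a finite cover of the compact manifold $X$ by such neighborhoods makes it global.

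\emph{Step 2: vanishing of $d\alpha$ and $d^{\ast}\alpha$.} With $\alpha\in A^k[W^{2,2}]$, Corollary \ref{laplacian-norm-squared} applies and gives
\[
\|d\alpha\|^2_{L^2}+\|d^{\ast}\alpha\|^2_{L^2}=\int_X(\Delta_d\alpha,\alpha)_g\,dVol=0 .
\]
One point needs checking. The distribution $\Delta_d\alpha$ from Proposition \ref{gt-laplacian}(2), built with the product conventions of Section \ref{lessreg}, must agree with $dd^{\ast}\alpha+d^{\ast}d\alpha$ as used in that corollary. For $\alpha\in W^{2,2}$ both are honest $L^2$ forms computed by the same Leibniz formulas with $W^{2,p}$ coefficients of $\star$, so they agree. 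Hence $d\alpha=0$ and $d^{\ast}\alpha=0$ in $L^2$.

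\emph{Step 3: conclusion.} Since $d\alpha=0$, we have $\alpha\in A^k[L^2,\ker(d)]$. Since $d^{\ast}\alpha=0$, we have $\alpha\in A^k[L^2,\ker(d^{\ast})]$. By definition $\alpha\in\mathcal{H}^k_g$.

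The main obstacle is Step 1. We must make sure the distributional equation $\Delta_d\alpha=0$, read in the sense of Proposition \ref{gt-laplacian}(2), is exactly a weak solution in the sense required by Lemma \ref{gt} and Gilbarg--Trudinger/Beck. This means rewriting the operator in divergence form with $a_{ij}\in W^{2,p}$, $b_i\in W^{1,p}$, $c\in L^p$, and noting that the lower-order terms applied to $u\in W^{1,2}$ land in $L^2$ or are divergences of $W^{1,2}$ fields. The proof of Proposition \ref{gt-laplacian} already carries this out, so the step should reduce to citing it.
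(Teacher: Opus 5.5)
Your proposal is correct and follows the paper's own argument: Proposition \ref{gt-laplacian}(3) upgrades $\alpha$ to $A^k[W^{2,2}]$, and then Corollary \ref{laplacian-norm-squared} forces $d\alpha=0$ and $d^{\ast}\alpha=0$, so $\alpha\in\mathcal{H}^k_g$. Your extra remarks, on patching the local regularity over a finite cover and on matching the two distributional readings of $\Delta_d\alpha$, are sensible details that the paper leaves implicit.
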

\begin{proof}
Proposition \ref{gt-laplacian} applies, to give $\alpha \in A^k[W^{2,2}]$. 
Then Corollary \ref{laplacian-norm-squared} implies that $d\alpha = 0$ and $d^{\ast}\alpha = 0$, thus $\alpha \in \mathcal{H}^k_g$.
\end{proof}

\begin{proposition}
\label{reg22}
If $\alpha \in \mathcal{H}^k_g$ then $\alpha \in A^k[W^{2,2}]$ and 
$\Delta _d(\alpha ) = 0$. 
\end{proposition}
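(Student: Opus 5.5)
Given $\alpha \in \mathcal{H}^k_g$, we first upgrade to $W^{1,2}$ and then to $W^{2,2}$ via elliptic regularity.

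\emph{Step 1.} By definition $\alpha \in A^k[L^2,\ker(d)]\cap A^k[L^2,\ker(d^{\ast})]$. In particular $\alpha \in A^k[L^2,Dom(d)]\cap A^k[L^2,Dom(d^{\ast})]$ with $\beta=0$ and $\beta'=0$. So Proposition \ref{reg12} applies and gives $\alpha \in A^k[W^{1,2}]$.

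\emph{Step 2.} Once $\alpha \in A^k[W^{1,2}]$, part (2) of Proposition \ref{gt-laplacian} lets us compute $\Delta_d\alpha \in A^k[W^{-1,2}]$ as a distribution, using the expression $dd^{\ast}+d^{\ast}d$ with $d^{\ast}=-\star d\star$. We show this distribution is $0$:
\begin{itemize}
\item \textbf{The $d^{\ast}d$ term.} Since $d\alpha=0$ as an $L^2$ form, $d^{\ast}d\alpha=-\star d\star(0)=0$.
\item \textbf{The $dd^{\ast}$ term.} Since $d^{\ast}\alpha=-\star d\star\alpha=0$ as a distribution, and this is already an $L^2$ (indeed zero) form, $d(d^{\ast}\alpha)=0$.
\end{itemize}
The main point requiring care is consistency of distributional definitions. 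The distribution $\Delta_d\alpha$ defined in Proposition \ref{gt-laplacian} via divergence form and the pairing of Section \ref{lessreg} must agree with the composite $d(d^{\ast}\alpha)+d^{\ast}(d\alpha)$ computed stepwise.

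For this, note two facts. First, $\star\alpha\in W^{1,2}$, since $\star$ has $W^{2,p}\subset C^1$ coefficients. Second, the Leibniz rule holds tautologically in the sense of Section \ref{lessreg}. Hence for $\alpha\in W^{1,2}$ all intermediate expressions, namely $d\alpha$, $\star d\alpha$, $d\star\alpha$ and $\star d\star\alpha$, are honest $L^2$ forms. The outer derivative is then a $W^{-1,2}$ distribution multiplied by $W^{2,p}$ coefficients, exactly as in the proof of Proposition \ref{gt-laplacian}. Therefore the two descriptions coincide, and $\Delta_d\alpha=0$ in $A^k[W^{-1,2}]$.

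\emph{Step 3.} Now $\alpha\in A^k[W^{1,2}]$ is a weak solution of $\Delta_d\alpha=\eta$ with $\eta=0\in A^k[L^2]$. Part (3) of Proposition \ref{gt-laplacian}, which rests on Lemma \ref{gt}, gives $\alpha\in A^k[W^{2,2}]$. This is a local statement after shrinking neighborhoods; we cover the compact $X$ by finitely many such neighborhoods to get the global conclusion.

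This yields both claims: $\alpha\in A^k[W^{2,2}]$ and $\Delta_d\alpha=0$, the latter now holding as an $L^2$ identity. Corollary \ref{reg2} provides the converse direction, so the proposition identifies $\mathcal{H}^k_g$ with the $W^{1,2}$ (equivalently $W^{2,2}$) solutions of $\Delta_d\alpha=0$.
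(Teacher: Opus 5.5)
Your proposal is correct and follows the paper's proof step for step. Proposition \ref{reg12} gives $\alpha\in A^k[W^{1,2}]$; part (2) of Proposition \ref{gt-laplacian} gives $\Delta_d\alpha=0$ distributionally from $d\alpha=0$ and $d^{\ast}\alpha=0$; and part (3) upgrades this to $\alpha\in A^k[W^{2,2}]$. Your remarks on the consistency of the distributional definitions and on patching the local regularity over a finite cover only make explicit what the paper leaves implicit.
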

\begin{proof}
If $\alpha \in \mathcal{H}^k_g$ we have by definition that $\alpha \in A^k[L^2]$ and also that $d\alpha = 0$ and $d^{\ast}\alpha = 0$. Therefore, in fact $\alpha \in A^k[L^2,Dom(d)] \cap A^k[L^2,Dom(d^{\ast})]$. Proposition \ref{reg12} applies to give $\alpha \in A^k[W^{1,2}]$. Now, distributionally we have $\Delta _d \alpha = 0$, by part (2) of Proposition \ref{gt-laplacian} since $d(\alpha )=0$ and $d^{\ast}(\alpha ) = 0$. Therefore,
the regularity of part (3) of Proposition \ref{gt-laplacian} applies, to give $\alpha \in A^k[W^{2,2}]$. 
\end{proof}

\subsection{The Hodge $(p,q)$ components}

\begin{theorem}\label{laplaciancommutes}
On the $W^{2,p}\times W^{1,p}$ almost-hermitian and formally Kähler manifold $(X,g,J)$, define $\mathbf{J}$ as in section \ref{1.1}. Then, when acting on forms $\alpha \in A^k[W^{2,2}]$, $\Delta _d$ commutes with any polynomial in $\mathbf{J}$. In particular, $\Delta _d$ commutes with the projections $\Pi ^{p,q}$. 
\end{theorem}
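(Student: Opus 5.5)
The plan is to redo the argument of Proposition \ref{DeltaJ} in the low-regularity setting. The Weitzenb\"ock formula (Proposition \ref{wf_thm}) is used as an identity of distributions, or better of $L^2$ forms, on $A^k[W^{2,2}]$. Once $[\Delta_d,\mathbf{J}]=0$ is known on $A^k[W^{2,2}]$, the statement for $\mathbf{J}^m$ and for arbitrary polynomials follows exactly as in Corollaries \ref{DeltaJm} and \ref{DeltaJP}. For the induction we must check that $\mathbf{J}$ preserves $A^k[W^{2,2}]$, and this needs some care, since $J$ is only $W^{1,p}$. The main point to use is that the formally K\"ahler condition $\nabla J=0$ upgrades the regularity of $J$. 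In background coordinates, $\partial J = -[\Gamma, J]$, where the Christoffel symbols $\Gamma$ lie in $W^{1,p}\subset C^{0,\alpha}$ (Remark \ref{coeffs}). Hence $\partial J\in W^{1,p}$, so $J\in W^{2,p}\subset C^{1,\alpha}$. Then $\mathbf{J}$ has $W^{2,p}$ coefficients, and multiplication by it preserves $W^{2,2}$ by the product theorems for $p\gg0$. Lemma \ref{poly_J} then gives the statement for $\Pi^{p,q}$.

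\textbf{Step 1: Weitzenb\"ock on $W^{2,2}$ forms.} For $\alpha\in A^k[W^{2,2}]$, each of $dd^*\alpha$, $d^*d\alpha$, $\nabla^*\nabla\alpha$ is an $L^2$ form. Their coefficients are sums of second derivatives of $\alpha$ times $W^{2,p}$ data, plus first derivatives times $W^{1,p}$ data. The term $R\alpha$ involves the curvature, which is only in $L^p$; since $L^p\cdot W^{2,2}\subset L^2$ for $p\gg0$, $R\alpha$ is also $L^2$. The identity $\Delta_d\alpha=\nabla^*\nabla\alpha+R\alpha$ holds for smooth metrics and smooth forms and is an algebraic identity in the metric coefficients and their derivatives up to order two. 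I would therefore prove it by approximation. Take smooth metrics $g_\epsilon\to g$ in $W^{2,p}$ and smooth forms $\alpha_\epsilon\to\alpha$ in $W^{2,2}$. Both sides then converge in $L^1$, using the multiplication estimates above, so the identity passes to the limit. Normal coordinates are not needed at any point, because the identity is a coordinate computation.

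\textbf{Step 2: $[\nabla^*\nabla,\mathbf{J}]=0$.} This is the Leibniz-rule computation of Proposition \ref{DeltaJ}. It is valid here because $\nabla\mathbf{J}=0$ pointwise a.e. and $\mathbf{J}\in W^{2,p}$, so products and derivatives make sense in $L^2$ and the Leibniz rule holds (Section \ref{lessreg}).

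\textbf{Step 3: $[R,\mathbf{J}]=0$.} This is the main obstacle. The smooth proof used $(d^\nabla)^2\mathbf{J}=0$ and the K\"ahler symmetry $Rm(\cdot,J\cdot,\cdot,\cdot)=-Rm(J\cdot,\cdot,\cdot,\cdot)$, both derived by differentiating $\nabla J=0$ once more. Here the curvature is only $L^p$, so I would derive $[R^\nabla(\zeta,\eta),J]=0$ a.e. directly in coordinates. Write $R^\nabla=\partial\Gamma-\partial\Gamma+\Gamma\Gamma-\Gamma\Gamma$ with $\partial\Gamma\in L^p$. Differentiate $\partial_i J+[\Gamma_i,J]=0$ in the weak sense; this is legitimate since every term is in $W^{1,p}$. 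Antisymmetrizing in $i,j$ then yields $[R_{ij},J]=0$ as an $L^p$ identity. The pair symmetry of $Rm$ holds a.e. by the same approximation argument as in Step 1, because the algebraic Bianchi identities are linear in the second derivatives of $g$. From these one gets $R^\nabla(J\zeta,J\eta)=R^\nabla(\zeta,\eta)$ a.e., which is equivalent to the needed symmetry. After that, the remaining algebra of Proposition \ref{DeltaJ}, including the frame change $e_j\mapsto Je_j$, is purely pointwise and carries over verbatim. Combining Steps 1--3 gives $[\Delta_d,\mathbf{J}]\alpha=0$ in $L^2$ for all $\alpha\in A^k[W^{2,2}]$, and the polynomial statement follows as indicated above.
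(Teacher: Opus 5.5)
Your argument is correct, but it runs differently from the paper's. The paper never improves the regularity of $J$. It keeps $\mathbf{J}\in W^{1,p}$, so $\mathbf{J}^{m'}\alpha$ is only in $W^{1,q}$ and $\Delta_d\mathbf{J}^{m'}\alpha$ is merely a $W^{-1,q}$ distribution. It then multiplies this by $\mathbf{J}^m$ via the product $W^{1,p}\cdot W^{-1,q}\to L^1$ of Section \ref{lessreg}. It justifies Proposition \ref{wf_thm} almost everywhere by noting that it is an identity in second-order jets of $g$, each of which is realized by a smooth metric; your smooth-approximation argument does the same job. Finally, it simply asserts that the computations of Proposition \ref{DeltaJ} and Corollary \ref{DeltaJm} go through with the available Leibniz rules.

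Your bootstrap $\partial_iJ=-[\Gamma_i,J]\in W^{1,p}$, hence $J\in W^{2,p}$, is the genuinely new ingredient, and it buys a cleaner proof. Every polynomial in $\mathbf{J}$ preserves $A^k[W^{2,2}]$, and all terms of the commutator are honest $L^2$ forms. The induction on $m$ never leaves the space on which the Weitzenb\"ock identity has been established, and no negative Sobolev multiplication is needed. The second differentiation of $\nabla J=0$ is carried out explicitly rather than left implicit: with the Jacobi identity it yields $[R^\nabla_{ij},J]=0$, and then, with the a.e.\ pair symmetry, $R^\nabla(J\cdot,J\cdot)=R^\nabla$. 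It also gives $\Pi^{p,q}\in W^{2,p}$, so for harmonic $\alpha$ one gets $\alpha^{p,q}\in A^k[W^{2,2}]$ directly. The regularity step inside Corollary \ref{reg2} could then be bypassed in favour of a direct use of Corollary \ref{laplacian-norm-squared}. The paper's route is shorter but less explicit.

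One small slip: in Step 1, $dd^*\alpha$ and $\nabla^*\nabla\alpha$ also contain zeroth-order terms in $\alpha$ whose coefficients are second derivatives of $g$, hence only $L^p$. These are controlled by the same $L^p\cdot W^{2,2}\subset L^2$ estimate you use for $R\alpha$.
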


\begin{proof}
One must check that the discussion of section \ref{1.1} holds for the $W^{2,p}\times W^{1,p}$ almost-hermitian and formally Kähler manifold $(X,g,J)$. The formal calculatory aspect of the proof is handled by Proposition \ref{DeltaJ} and Corollary \ref{DeltaJm}. We may verify that the multiplications involved in the commutator all make sense in our less regular setting, and that the Leibniz rules hold for the available level of regularity. To that end, note that if $\alpha \in A^k[W^{2,2}]$ then $\mathbf{J}^m \Delta _d \mathbf{J}^{m'}\alpha \in W^{1,p} \cdot W^{-1,q}$ for some $q>2$ (since $\mathbf{J}^{m'}\alpha$ is in $W^{1,q}$). There is a Sobolev multiplication from $W^{1,p} \cdot W^{-1,q}$ to $L^1$, see Section \ref{lessreg}. The Weitzenb{\"o}ck curvature operator $R$ is in $L^p$ and it can multiply elements of $W^{1,p}\cdot W^{2,2}$.

The two quantities 
$(\Delta_d - \nabla^* \nabla )$ and $R$ 
identified in the Weitzenb{\"o}ck formula of Proposition \ref{wf_thm}
depend on the derivatives of $g$ up to second order. These are defined almost everywhere for $g$ in $W^{2,p}$, and furthermore any second order jet can be seen as coming from a smooth metric somewhere, so Proposition \ref{wf_thm} holds for any second order jet of a metric $g$. Thus, the identification of \ref{wf_thm} holds almost everywhere, so it holds in $L^p$. 
The proofs of Proposition \ref{DeltaJ} and Corollary \ref{DeltaJm} then depend only on Leibniz rules and multiplications that are available as described in the previous paragraph. 
\end{proof}

Suppose $\alpha \in \mathcal{H}^k_g\otimes_{\mathbb{R}}\!\mathbb{C}$. Define $\alpha ^{p,q}:= \Pi ^{p,q} \alpha$. Recall that $J$, hence $\mathbf{J}$, are continuous so pointwise and continuously the $k$-forms decompose as a direct sum of forms of Hodge types $(p,q)$ with $p+q=k$. In other words $\sum _{p+q=k}\Pi ^{p,q}$ is the identity, giving $\alpha = \sum _{p+q=k}\alpha ^{p,q}$. 

By Proposition \ref{reg22}, $\alpha \in A^k[W^{2,2}]$ and $\Delta _d(\alpha ) = 0$. The projections are in $W^{1,p}$ so 
\[
\alpha ^{p,q} \in A^k[W^{1,p}\cdot W^{2,2}] \subset A^k[W^{1,2}].
\]
The Laplacian commutes with $\Pi ^{p,q}$ by Theorem \ref{laplaciancommutes}.  Therefore we get the distributional equation $\Delta _d\alpha ^{p,q}=0$. 
Corollary \ref{reg2} yields $\alpha ^{p,q} \in \mathcal{H}^k_g$. 

\begin{corollary}
\label{pqdecomp}
Let $\mathcal{H}_g^{p,q} \coloneqq \mathcal{H}^k_g \otimes_{\mathbb{R}}\!\mathbb{C} \cap A^{p,q}[L^2]$ be the space of harmonic forms of Hodge type $(p,q)$. Then 
\[
\mathcal{H}^k_g\otimes_{\mathbb{R}}\!\mathbb{C} = \bigoplus _{p+q=k} \mathcal{H}_g^{p,q} .
\]
\end{corollary}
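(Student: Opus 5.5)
The plan is to mimic the proof of Theorem \ref{Hodgethm1}, now in the $W^{2,p}\times W^{1,p}$ formally Kähler setting, using the paragraph preceding the statement as the main input.

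\textbf{Directness of the sum.} The pieces $\mathcal{H}^{p,q}_g$ lie in $A^{p,q}[L^2]$. Since $J$ is continuous, pointwise the complexified $k$-forms split as a direct sum of the $(p,q)$ types. Hence the subspaces $A^{p,q}[L^2]$ for $p+q=k$ are independent, and pointwise they are even orthogonal for the hermitian extension of $g$, because $J$ is $g$-orthogonal. So any relation $\sum_{p+q=k}\beta^{p,q}=0$ with $\beta^{p,q}\in\mathcal{H}^{p,q}_g$ forces, after applying $\Pi^{p,q}$, each $\beta^{p,q}=0$. This gives an injection $\bigoplus_{p+q=k}\mathcal{H}^{p,q}_g\to\mathcal{H}^k_g\otimes\mathbb{C}$.

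\textbf{The reverse containment.} Each $\mathcal{H}^{p,q}_g$ is contained in $\mathcal{H}^k_g\otimes\mathbb{C}$ by definition, which gives one inclusion. For the other, take $\alpha\in\mathcal{H}^k_g\otimes\mathbb{C}$.
\begin{enumerate}
\item By Proposition \ref{reg22}, applied to the real and imaginary parts, $\alpha\in A^k[W^{2,2}]$ and $\Delta_d\alpha=0$.
\item Set $\alpha^{p,q}=\Pi^{p,q}\alpha$. Since $\Pi^{p,q}=P^{p,q}(\mathbf{J})$ by Lemma \ref{poly_J}, it has $W^{1,p}$ coefficients. Hence $\alpha^{p,q}\in A^k[W^{1,2}]$ by the Sobolev multiplication $W^{1,p}\cdot W^{2,2}\subset W^{1,2}$ for $p\gg0$.
\item By Theorem \ref{laplaciancommutes}, $\Delta_d\alpha^{p,q}=\Pi^{p,q}\Delta_d\alpha=0$ distributionally.
\item Corollary \ref{reg2}, applied to real and imaginary parts (note $\Delta_d$ is real), gives $\alpha^{p,q}\in\mathcal{H}^k_g\otimes\mathbb{C}$. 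It is of type $(p,q)$ and in $L^2$, so $\alpha^{p,q}\in\mathcal{H}^{p,q}_g$.
\end{enumerate}
Since $\sum_{p+q=k}\Pi^{p,q}=\operatorname{Id}$ pointwise, $\alpha=\sum_{p+q=k}\alpha^{p,q}$ lies in the direct sum.

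\textbf{Main obstacle.} The delicate step is step 3, applying Theorem \ref{laplaciancommutes} to $\alpha$. That theorem concerns $\alpha\in A^k[W^{2,2}]$, and the commutation must hold as an identity of distributions even though $\Pi^{p,q}\alpha$ is only $W^{1,2}$.

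I would handle this by expanding $\Pi^{p,q}$ as a polynomial in $\mathbf{J}$ and using Corollary \ref{DeltaJm} in the weak sense. The products $\mathbf{J}^m\Delta_d\mathbf{J}^{m'}\alpha$ should be interpreted via the pairing $W^{1,p}\cdot W^{-1,q}$ of Section \ref{lessreg}, and the two sides compared against smooth test forms. A further check is that $\Delta_d$ applied to the $W^{1,2}$ form $\alpha^{p,q}$, in the sense of Proposition \ref{gt-laplacian}(2), agrees with this product distribution. This amounts to verifying a Leibniz rule for $W^{1,p}$ coefficients against $W^{-1,2}$-type distributions, and I would verify it by density of smooth forms in $W^{2,2}$ together with continuity of both sides in the $W^{2,2}$ norm.
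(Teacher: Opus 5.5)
Your proposal is correct and follows the paper's argument essentially step for step. Injectivity comes from the pointwise $(p,q)$ decomposition, and surjectivity comes from Proposition \ref{reg22}, the multiplication $W^{1,p}\cdot W^{2,2}\subset W^{1,2}$, Theorem \ref{laplaciancommutes} and Corollary \ref{reg2}. The distributional subtlety you flag in step 3 is already handled by Theorem \ref{laplaciancommutes}, which is stated for $\alpha\in A^k[W^{2,2}]$, so you can simply cite it; your extra density argument is unnecessary.
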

\begin{proof}
The discussion above shows that the map 
\[
\bigoplus _{p+q=k} \mathcal{H}_g^{p,q} \rightarrow 
\mathcal{H}^k_g \otimes_{\mathbb{R}}\!\mathbb{C}
\]
is surjective. Pointwise, the $k$-forms decompose into their components of type $(p,q)$, so this map is injective. 
\end{proof}

Indeed, we obtain the following fact as a consequence of Corollary \ref{Hodge_thm} and Corollary \ref{pqdecomp}.

\begin{corollary}\label{Hodge_thm_Betti}
Let $X$ be a formally K{\"a}hler $W^{2,p} \times W^{1,p}$ hermitian manifold. Then, $b_{2k+1}(X)$ is even for all $k$.
\end{corollary}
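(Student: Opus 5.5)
The plan is to mimic the proof of Theorem \ref{Betti} verbatim, replacing the smooth ingredients by their Sobolev-regular counterparts established above. The first step is to identify $b_{2k+1}(X)$ with a dimension of harmonic forms. By Corollary \ref{Hodge_thm}, the map $\mathcal{H}^{2k+1}_g \to H^{2k+1}_{DR}(X)$ is an isomorphism of real vector spaces. Tensoring with $\mathbb{C}$ gives
\[
b_{2k+1}(X) = \dim_{\mathbb{C}} \mathcal{H}^{2k+1}_g \otimes_{\mathbb{R}}\! \mathbb{C}.
\]
Next, Corollary \ref{pqdecomp} gives the decomposition
\[
\mathcal{H}^{2k+1}_g\otimes_{\mathbb{R}}\!\mathbb{C} = \bigoplus_{p+q=2k+1} \mathcal{H}^{p,q}_g ,
\]
so $b_{2k+1}(X) = \sum_{p+q=2k+1} \dim \mathcal{H}^{p,q}_g$. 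Each summand is finite dimensional, being a subspace of the finite dimensional space $\mathcal{H}^{2k+1}_g\otimes\mathbb{C}$.

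The second step is the conjugation symmetry $\overline{\mathcal{H}^{p,q}_g} = \mathcal{H}^{q,p}_g$. Since $J$ is real and continuous, pointwise complex conjugation exchanges $(p,q)$-forms and $(q,p)$-forms. Hence $\overline{A^{p,q}[L^2]} = A^{q,p}[L^2]$. Alternatively one can see this through Lemma \ref{poly_J}: conjugating the polynomial $P^{p,q}$ swaps the eigenvalues $i(p-q)$ and $i(q-p)$. On the other side, $\mathcal{H}^{2k+1}_g$ is defined by the real conditions $d\alpha=0$ and $d^{\ast}\alpha=0$, where $d^{\ast}=-\star d\star$ is taken distributionally with real $W^{2,p}$ coefficients for $\star$. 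So the complexification $\mathcal{H}^{2k+1}_g\otimes\mathbb{C}$ is stable under conjugation. Intersecting the two statements gives $\overline{\mathcal{H}^{p,q}_g}=\mathcal{H}^{q,p}_g$. Because conjugation is an $\mathbb{R}$-linear bijection that is conjugate-linear over $\mathbb{C}$, it follows that $\dim_{\mathbb{C}}\mathcal{H}^{p,q}_g = \dim_{\mathbb{C}}\mathcal{H}^{q,p}_g$.

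Finally, since $p+q=2k+1$ is odd, we never have $p=q$. The index set therefore splits into the pairs $\{(p,q),(q,p)\}$ with $p<q$, and
\[
b_{2k+1}(X) = 2\sum_{p<q,\ p+q=2k+1}\dim \mathcal{H}^{p,q}_g \in 2\mathbb{Z}_{\geq 0}.
\]

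The only point requiring care is the conjugation step in the low-regularity setting. One must check that the distributional operators $d$ and $d^{\ast}$, and the pointwise projection to type $(p,q)$, commute with complex conjugation on $A^{\cdot}[L^2]$. I expect this to be immediate, because all coefficients involved ($\star$ in $W^{2,p}$, $J$ in $W^{1,p}$) are real. Everything substantive is already contained in Corollaries \ref{Hodge_thm} and \ref{pqdecomp}.
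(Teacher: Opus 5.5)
Your proposal is correct and follows the paper's own proof. The paper simply reruns the proof of Theorem \ref{Betti} with $\mathcal{H}^k_g$ and $\mathcal{H}^{p,q}_g$ in place of $\mathcal{H}^k$ and $\mathcal{H}^{p,q}$, using Corollaries \ref{Hodge_thm} and \ref{pqdecomp} exactly as you do. Your observation that $\mathcal{H}^k_g\otimes_{\mathbb{R}}\mathbb{C}$ is conjugation-stable because $d$ and $d^{\ast}=-\star d\star$ have real coefficients is the right low-regularity substitute for the paper's remark that ``$\Delta_d$ is a real operator.''
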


\begin{proof}
Replace $\mathcal{H}^k$ with $\mathcal{H}^k_g$ and $\mathcal{H}^{p,q}$ with $\mathcal{H}^{p,q}_g$ in the proof of Theorem \ref{Betti}.
\end{proof}

\section{K{\"a}hler invariants at the threshold}\label{7}

\noindent
\emph{Proof of Theorem \ref{Betti_numbers_thm}}. For a contradiction, suppose that there exist an $a\geq 1$ and bounds $d,k,i_0$ such that for each $0<\epsilon_0 \leq a$ there is an $\epsilon_0$-acK manifold $(X_{\epsilon_0},g_{\epsilon_0},J_{\epsilon_0})$ with $(X_{\epsilon_0},g_{\epsilon_0}) \in \mathcal{M}(n,d,k,i_0)$ and odd Betti number $b_{2k+1}(X_{\epsilon_0})$ for some $k.$ Thus, we can define a sequence of $\frac{1}{i}$-acK manifolds $\{(X_{(\frac{1}{i})},g_{(\frac{1}{i})},J_{(\frac{1}{i})})\}_{i=1}^{\infty}$ with $(X_{(\frac{1}{i})},g_{(\frac{1}{i})}) \in \mathcal{M}(n,d,k,i_0)$, i.e.\ $\{(X_{(\frac{1}{i})},g_{(\frac{1}{i})},J_{(\frac{1}{i})})\}_{i=1}^{\infty} \subset \mathcal{A}(n,d,k,i_0,1)$.  By Theorem \ref{prepctness_thm}, we can extract a subsequence $\{(X_{(\frac{1}{i_l})},g_{(\frac{1}{i_l})},J_{(\frac{1}{i_l})})\}_{l=1}^{\infty}$ with $(g_{(\frac{1}{i_l})},J_{(\frac{1}{i_l})})$ converging in the weak-$W^{2,p}\times W^{1,p}$ topology to an acK manifold 
\linebreak[9]
$(X_{\infty},g_{\infty},J_{\infty})$ that is formally K{\"a}hler, i.e.\ $\|\nabla^{\infty} J_{\infty}\|_{g_\infty} \leq \liminf{\frac{1}{i}}=0.$ Hence $b_{2k+1}(X_{\infty})$ is even by Corollary \ref{Hodge_thm_Betti}. But diffeomorphism invariance of Betti numbers implies that $b_{2k+1}(X_{(\frac{1}{i_l})})$ is even as well, which is a contradiction.\QEDB

If we take a closer look at the proof of Theorem \ref{Betti_numbers_thm}, we see that in fact it proves a more general result, Theorem \ref{Thm-P}. Before enouncing the theorem let us discuss its proof. Theorem \ref{prepctness_thm} implies that any diffeomorphism-invariant property $P$ of compact formally K{\"a}hler manifolds, with $W^{2,p} \times W^{1,p}$ hermitian structure $(g,J)$, is also a property of compact $\epsilon$-acK manifolds for small enough $\epsilon>0$. In Theorem \ref{Betti_numbers_thm}, $P$ is the property that the odd Betti numbers are even. We essentially showed that property $P$ holds true for formally K{\"a}hler manifolds with $W^{2,p} \times W^{1,p}$ hermitian structure. Then, we used Theorem \ref{prepctness_thm} to argue by contradiction and conclude that there must be an $\epsilon$ so that any compact $\epsilon$-acK manifold also has property $P$. The contradiction is produced by the diffeomorphism invariance of $P$. The subtle point here is indeed the lesser than usual regularity. In fact, a property $P$ of the kind discussed above is quite abstract in practice. But most likely, this is not because such properties are scarce, rather it is because they need to be studied more intensively. The first place to look for such $P$ is K{\"a}hler geometry. We suspect that it is no coincidence that facts about truly K{\"a}hler manifolds, remain true up to a certain threshold of non-integrability.

\begin{theorem}\label{Thm-P}
Let $P$ be a property of compact formally K{\"a}hler manifolds with $W^{2,p} \times W^{1,p}$ hermitian structure that is invariant under diffeomorphism. Let $n>1$ be an integer, and $(X,g,J)$ be an almost-hermitian manifold of dimension $n.$ For any set of bounds $d,k,i_0,$ and any real number $a\geq 1,$ there is an $0<\epsilon_0 \leq a$ such that for any $(X,g) \in \mathcal{M}(n,d,k,i_0)$, if $(X,g,J)$ is $\epsilon_0$-acK, then $X$ enjoys property $P$ as well.
\end{theorem}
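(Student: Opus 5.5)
The plan is to repeat the contradiction argument from the proof of Theorem \ref{Betti_numbers_thm}, with the specific property ``odd Betti numbers are even'' replaced by the abstract diffeomorphism-invariant property $P$. We interpret the hypothesis on $P$ in the natural way. $P$ is a property of the underlying smooth manifold, or more precisely it is invariant under the diffeomorphisms $f_i$ appearing in Definition \ref{ahconvergence_def}. It holds for every compact formally K{\"a}hler $W^{2,p}\times W^{1,p}$ almost-hermitian manifold. Finally, it is meaningful for the smooth $\epsilon$-acK manifolds in $\mathcal{A}(n,d,k,i_0,a)$, since these have the same underlying manifolds.

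Fix $n,d,k,i_0$ and $a\geq 1$, and suppose for contradiction that no $\epsilon_0\in(0,a]$ works. Then for each integer $i\geq 1$ we may take $\epsilon_0 = 1/i \leq 1\leq a$. This produces a $\frac1i$-acK manifold $(X_i,g_i,J_i)$ with $(X_i,g_i)\in\mathcal{M}(n,d,k,i_0)$ such that $X_i$ fails $P$. The sequence lies in $\mathcal{A}(n,d,k,i_0,1)$.

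We apply Theorem \ref{prepctness_thm} to this sequence. Its proof extracts a subsequence and diffeomorphisms $f_{i_l}\colon X_\infty\to X_{i_l}$ such that $f_{i_l}^*g_{i_l}\to g_\infty$ weakly in $W^{2,p}$ (strongly in $C^{1,\alpha}$) and $f_{i_l}^*J_{i_l}\to J_\infty$ weakly in $W^{1,p}$. The limit satisfies $\|\nabla^\infty J_\infty\|_{g_\infty}\leq \liminf_l \frac{1}{i_l}=0$ almost everywhere. Hence $\nabla^\infty J_\infty=0$ in $L^p$, so $(X_\infty,g_\infty,J_\infty)$ is a compact formally K{\"a}hler $W^{2,p}\times W^{1,p}$ almost-hermitian manifold. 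We should also check compatibility of $J_\infty$ with $g_\infty$ and the identity $J_\infty^2=-\operatorname{Id}$. These follow because $f^*_{i_l}J_{i_l}$ converges strongly in $C^0$, by the compact embedding $W^{1,p}\subset C^0$, while the metrics converge in $C^1$.

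By hypothesis $X_\infty$ has property $P$. For $l$ large, $f_{i_l}$ is a diffeomorphism $X_\infty\to X_{i_l}$, so diffeomorphism invariance transfers $P$ to $X_{i_l}$. This contradicts the choice of $X_{i_l}$. Hence some $\epsilon_0\in(0,a]$ exists, and any $\epsilon_0$-acK $(X,g,J)$ with $(X,g)\in\mathcal{M}(n,d,k,i_0)$ has property $P$.

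The main obstacle is not analytic, since the analysis is already contained in Theorem \ref{prepctness_thm}. It lies in the final step: checking that the limit really is formally K{\"a}hler in the sense required by $P$. The bound $\|\nabla^\infty J_\infty\|\le\liminf\epsilon_i$ only holds almost everywhere, via the $L^{p'}$-uniform argument. I would make explicit that ``formally K{\"a}hler'' means $\nabla^\infty J_\infty=0$ as an $L^p$ (a.e.) identity, which is exactly what the limit provides. I would also make explicit that the invariance of $P$ applies to the smooth background diffeomorphisms $f_{i_l}$, so that $P$ passes from the limit to the approximating manifolds.
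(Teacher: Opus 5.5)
Your proposal is correct and is essentially the paper's own argument. The paper also argues by contradiction with a sequence of $\frac{1}{i}$-acK manifolds in $\mathcal{A}(n,d,k,i_0,1)$ failing $P$, uses Theorem \ref{prepctness_thm} to get a formally K{\"a}hler $W^{2,p}\times W^{1,p}$ limit, and transfers $P$ back along the diffeomorphisms $f_{i_l}$. Your check that $J_\infty$ is a $g_\infty$-compatible almost-complex structure (via the compact embedding $W^{1,p}\subset C^0$) fills in a point the paper only asserts. Your worry about the a.e.\ bound is unnecessary, since $\epsilon_i\to 0$ and weak lower semicontinuity already give $\|\nabla^\infty J_\infty\|_{L^p}=0$, hence $\nabla^\infty J_\infty=0$ a.e.\ without the $p'\to\infty$ step.
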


When the conclusion of this theorem holds, we say that the property $P$ {\em admits a positive Kähler threshold} and define the threshold to be the optimal $0 <\epsilon =\epsilon (P;n,d,k,i_0) \leq a$ with the stated property. 

\begin{theorem}\label{evenpos}
Let $P_{\rm even-pos}$ be the property of an $n$-dimensional K{\"a}hler manifold, that the even Betti numbers $b_{2k}$ for $0\leq k \leq n/2$ are strictly positive. This property admits a positive Kähler threshold: there is an $0< \epsilon_0 = \epsilon_0 (P_{\rm even-pos};n,d,k,i_0) \leq a$ such that if $(X,g,J)$ is an $\epsilon_0$-acK manifold then it has property $P_{\rm even-pos}$. 
\end{theorem}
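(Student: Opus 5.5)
By Theorem \ref{Thm-P}, it suffices to show that $P_{\rm even-pos}$ holds for every compact formally Kähler $W^{2,p}\times W^{1,p}$ hermitian manifold $(X,g,J)$. Here $P_{\rm even-pos}$ is the statement that $b_{2k}(X)>0$ for $0\leq k\leq n/2$. This property is diffeomorphism invariant, since Betti numbers are topological invariants. Once this is done, the contradiction argument used for Theorem \ref{Betti_numbers_thm} goes through verbatim. Concretely, take a sequence of $\frac{1}{i}$-acK manifolds in $\mathcal{A}(n,d,k,i_0,1)$ that violate the property. Theorem \ref{prepctness_thm} gives a formally Kähler limit $(X_\infty,g_\infty,J_\infty)$, and $X_\infty$ is diffeomorphic to the members of a subsequence.

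The core step is the low-regularity analogue of Remark \ref{evenBetti}. Let $\omega_J=g(J\cdot,\cdot)$. Since $g\in W^{2,p}\subset C^{1,\alpha}$ and $J\in W^{1,p}\subset C^0$, the form $\omega_J$ lies in $A^2[W^{1,p}]$. Consequently $\omega_J^k\in A^{2k}[W^{1,p}]\subset A^{2k}[L^2]$, because $W^{1,p}$ is an algebra for $p\gg 0$. I would then verify that $\nabla\omega_J=0$ in $L^p$. Lemma \ref{cov_kaehler} gives $(\nabla_\zeta\omega_J)(\eta,\rho)=g((\nabla_\zeta J)\eta,\rho)$ pointwise almost everywhere; this is a purely algebraic identity once the Christoffel symbols are in $W^{1,p}$ and $J\in W^{1,p}$, and the Leibniz rule for products of $W^{1,p}$ functions holds. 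The formally Kähler condition $\nabla J=0$ then gives $\nabla\omega_J=0$. By the Leibniz rule in $W^{1,p}$, an induction gives $\nabla(\omega_J^k)=0$. Since $d$ and $d^{\ast}$ are expressed through $\nabla$ (the formulas in Section \ref{1.1}, which hold a.e. because the Christoffel symbols are $W^{1,p}$), it follows that $d\omega_J^k=0$ and $d^{\ast}\omega_J^k=0$ as $L^p$ functions. These agree with the distributional $d$ and $d^\ast$ on $W^{1,p}$ forms, so $\omega_J^k\in\mathcal{H}^{2k}_g$.

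Next, $\omega_J^k\neq 0$ for $k\leq n/2$. At each point, $J$ is an orthogonal complex structure for $g$, so in a unitary frame $\omega_J=\sum e^{2j-1}\wedge e^{2j}$ and $\omega_J^k$ equals $k!$ times a nonzero sum of coordinate forms. Finally, Corollary \ref{Hodge_thm} gives $\mathcal{H}^{2k}_g\cong H^{2k}_{DR}(X)$. Since $\omega_J^k$ is a nonzero harmonic form, $b_{2k}(X)>0$. The case $k=0$ is trivial, since the constants give $b_0\geq 1$.

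The main obstacle I expect is justifying that the a.e. identities $d=\mathrm{Alt}\circ\nabla$ and $d^{\ast}=-\mathrm{tr}\,\nabla$ coincide with the distributional operators used to define $\mathcal{H}^{2k}_g$. For $d$ this is the standard fact that weak derivatives of $W^{1,p}$ coefficients compute the distributional exterior derivative. For $d^\ast=-\star d\star$, I would use that $\star\in W^{2,p}$ and $\omega_J^k\in W^{1,p}$, so $\star\omega_J^k\in W^{1,p}$ and the product rule applies. The symmetry of the Christoffel symbols then reduces the coordinate expression of $-\star d\star$ to $-\mathrm{tr}\,\nabla$ almost everywhere, exactly as in the smooth case.
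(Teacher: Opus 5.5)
Your proposal is correct and follows the same route as the paper: you reduce via Theorem \ref{Thm-P} to the $W^{2,p}\times W^{1,p}$ formally Kähler case, observe (as in Remark \ref{evenBetti}) that $\omega_J^k$ is a nonzero element of $\mathcal{H}^{2k}_g$, and conclude $b_{2k}>0$ from Corollary \ref{Hodge_thm}. Your extra check that the a.e.\ identities $\nabla\omega_J^k=0$, $d\omega_J^k=0$ and $d^{\ast}\omega_J^k=0$ agree with the distributional operators defining $\mathcal{H}^{2k}_g$ fills in regularity details that the paper leaves implicit.
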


\begin{proof}
We have now streamlined the above process: apply Theorem \ref{Thm-P} and observe that for a $W^{2,p}\times W^{1,p}$ hermitian formally Kähler manifold $(X,g,J)$, $\omega _J^k$ is harmonic, giving that $b_{2k}>0$ (c.f.\ Remark \ref{evenBetti} and Corollary \ref{Hodge_thm}). 
\end{proof}

\section{Future directions and questions}\label{8}

The following list groups in one place some interesting open problems inspired by our present work.

\begin{enumerate}

    \item \textbf{Properties $\mathbf{P}$.} Find other examples of properties $P$ that satisfy the hypotheses of Theorem \ref{Thm-P}. Future applications of this theorem could involve the Hard Lefschetz property, the vanishing of Borel volume regulators for local systems on compact Kähler manifolds \cite{Reznikov}, restrictions on fundamental groups, existence of mixed Hodge structures on homotopy groups, formality of the homotopy type, and structural results on the cohomology jumping loci.
    
    \item \textbf{Properties of almost-complex and/or symplectic manifolds.} Certain properties might be defined only for the case of almost-complex manifolds (for example, the degeneration of the Hodge to De Rham spectral sequence and equivalence of De Rham and Dolbeault cohomology) or symplectic manifolds. A version of Theorem \ref{Thm-P} should also hold for these properties. 
    
    \item \textbf{Bogomolov-Miyaoka-Yau and Bogomolov-Gieseker inequalities.} Find topological results for acK geometry that generalize such results for K{\"a}hler manifolds and that can be obtained by extending definitions and quantities to this new setting. 
    
    The famous Bogomolov, Bogomolov-Miyaoka-Yau and Bogomolov-Gieseker inequalities give some of the most refined information available on the topological properties of compact Kähler manifolds and smooth complex projective varieties. We would like to formulate these in a way that can enter into Theorem \ref{Thm-P}. This is not straightforward, since these inequalities depend on some geometrical hypothesis such as $K_X$ being ample, $X$ being of general type, or vector bundles or Higgs bundles being semistable. We therefore leave this area of questions as a direction for future study. But we would like to note that these properties have served as defining signposts for the study of topological  properties of complex manifolds and as such form a main source of motivation for the present paper. One possible approach is to use the concept of big representation of the fundamental group, however this is still somewhat geometric. Does it give a nef $K_X$?  

\item \textbf{Existence of a smooth Kähler structure.} The applications of Theorem \ref{Thm-P}, and the resulting various Kähler thresholds could be subsumed in a general statement saying that the limiting manifold has a smooth Kähler structure. 

\begin{conjecture}
\label{change-K}
For $p\gg 0$, a $W^{2,p}\times W^{1,p}$ formally Kähler manifold has a smooth structure compatible with $J$ s.t.\ nearby $\omega_J$, there is a symplectic form that is smooth for this structure, so that $X$ is $W^{2,p}$ diffeomorphic to a smooth Kähler manifold. 
\end{conjecture}

Two basic ingredients would need to enter into the proof. The first would be a Newlander-Nirenberg theorem-type statement that we formulate as its own conjecture: 

\begin{conjecture}
\label{NN}
The Newlander-Nirenberg theorem on integrability of the complex structure holds for a 
$W^{2,p}\times W^{1,p}$ formally Kähler manifold, leading to holomorphic coordinate charts and a new smooth structure in which $J$ becomes smooth while $g$ remains $W^{2,p}$. 
\end{conjecture}

At this point, the Kähler form $\omega_J$, which comes from $J$ and the Riemannian metric, remains $W^{2,p}$. To prove Conjecture \ref{change-K} one would then like to perturb $\omega_J$ to a nearby form $\omega'_J$ that is a smooth Kähler form for $(X,J)$ in the new smooth structure. 

It might be possible to use the Hodge decomposition theorem developed in this paper in order to do that.

We are wondering if the question formulated by Anderson \cite[Remark 2.3]{Anderson} about curvature concentration might not have an answer specific to the acK case: 

\begin{conjecture}\label{optimal}
For $\epsilon$-acK manifolds with $\epsilon \in (0,a]$, the $W^{2,p}$ harmonic radius \cite{Anderson_notes} is uniformly bounded below in terms of $(n,d,k,i_0,a)$. 
\end{conjecture}

Note that Conjecture \ref{optimal} would allow us to apply the technique of \cite{Anderson_notes} to get a weak (*) limit in $W^{2,\infty}\times W^{1,\infty}$.

\item \textbf{Computer assisted mathematics.} It will be interesting to launch computer-verified artificial intelligence formal proof developments of various parts of the arguments involved in Theorem \ref{Thm-P}. For instance, in section \ref{1.1} we have presented a proof of the classical Hodge decomposition theorem relying more on computation and less on conceptual elements than the usual proof. This could be well adapted to computer formalization (e.g.\ in {\sc Lean} or {\sc Rocq}) where the verification of series of rewrite statements in a calculation does not pose too much of a problem.

Some of the steps in a formalization procedure, such as the refinement of regularity hypotheses throughout the arguments, could provide a roadmap for many things to formalize in global analysis and differential geometry. This could notably include parts of 
Gilbarg-Trudinger \cite{GilbargTrudinger}. 

Along with the general question of formalization, there is also the question of how to make various parts of the argument more constructive, without the axiom of choice and in a finitary setting. Clearly, our current reliance on general compactness results will hinder this type of pursuit. This topic is therefore closely related to the search for a finitary expression of the theory, and the search for explicit bounds on the Kähler threshold. 

\item \textbf{Constructive and finitary statements.} The notion of $\epsilon$-acK manifold could serve to make precise the intuition that topological properties of Kähler manifolds are in fact finitary questions. For this, one would like to approximate the underlying Riemannian manifold by a triangulation structure, see for example Dodziuk \cite{DodziukCombinatorial}, or possibly as a point-cloud as discussed recently by Lê \cite{Le}.  
One would need to try giving a definition of the curvature being bounded needed to formulate a bounded geometry property. Then it would be necessary to have a theory of refinements of the point-cloud in order to get an approximate calculation of $\| \nabla J \|$ and formulate the $\epsilon$-acK condition. 

\item \textbf{Explicit bounds for the Kähler threshold.} In cases where the Kähler threshold is positive, such as Theorem \ref{evenpos}, we would like to get explicit lower bounds for the threshold 
$\epsilon (P;n,d,k,i_0,a)$. Our method of proof used a general compactness argument and extraction of subsequences, so it does not give any information on the explicit bound.

\end{enumerate}

\noindent
\textbf{Acknowledgements.} The research leading to these results has received funding from the European Research Council (ERC) under the European Union's Ninth Framework Programme Horizon Europe (ERC Synergy Project Malinca, Grant Agreement n.\ 101167526).

\end{document}